\documentclass[11pt]{amsart}
\usepackage{amssymb,amsbsy,amsmath,amsfonts,amssymb,amscd,graphicx, stackrel}
\parskip 4pt

\usepackage{color}

\addtolength{\hoffset}{-1.8cm}\addtolength{\textwidth}{3cm}
\addtolength{\voffset}{0.5cm}\addtolength{\textheight}{0.5cm}
\hfuzz4pt
\newtheorem{theorem}{Theorem}[section]
\newtheorem{corollary}[theorem]{Corollary}
\newtheorem{lemma}[theorem]{Lemma}

\newtheorem{remark}[theorem]{Remark}
\numberwithin{equation}{section}

\newcommand{\R}{\mathbb{R}}

\newcommand{\eps}{\varepsilon}
\renewcommand{\phi}{\varphi}

\newcommand{\beq}{\begin{equation}}
\newcommand{\eeq}{\end{equation}}
\newcommand{\Email}[1]{\rm{\sl E-mail\/}: \textsf{#1}}
\newcommand{\al}{\alpha}
\newcommand{\ve}{\varepsilon}

\def\neweq#1{\begin{equation}\label{#1}}
\def\endeq{\end{equation}}
\def\eq#1{(\ref{#1})}
\def\proof{\noindent{\it Proof. }}
\def\endproof{\hfill $\Box$\par\vskip3mm}

\definecolor{darkblue}{rgb}{0.05, .05, .65}
\definecolor{darkgreen}{rgb}{0.05, .55, .05}
\definecolor{darkred}{rgb}{0.8,0,0}
\begin{document}

\title[Emden-Fowler equation on the hyperbolic space]{{\bf Classification of radial solutions to the\\[6pt]
 Emden-Fowler equation on the hyperbolic space}}

\author[M. Bonforte]{Matteo Bonforte}
\author[F. Gazzola]{Filippo Gazzola}
\author[G. Grillo]{Gabriele Grillo}
\author[J.L. V\'azquez]{Juan Luis V\'azquez}

\address{\hspace*{-30pt}
{\sc{M. Bonforte:}} \rm Departamento de Matem\'{a}ticas, Univ. Aut\'{o}noma de Madrid, Campus de Cantoblanco, 28049 Madrid, Spain. \Email{matteo.bonforte@uam.es}\newline
\hspace*{-15pt}{\sc{F. Gazzola:}} \rm Dip. di Matematica, Politecnico di Milano, Piazza Leonardo da Vinci 32, 20133 Milano, Italy. \Email{filippo.gazzola@polimi.it}\newline
\hspace*{-15pt}{\sc{G. Grillo:}} \rm Dip. di Matematica, Politecnico di Milano, Piazza Leonardo da Vinci 32, 20133 Milano, Italy. \Email{gabriele.grillo@polimi.it}\newline
\hspace*{-15pt}{\sc{J.L. V\'azquez:}} \rm Departamento de Matem\'{a}ticas, Univ. Aut\'{o}noma de Madrid, Campus de Cantoblanco, 28049 Madrid, Spain. \Email{juanluis.vazquez@uam.es}}

\begin{abstract}We study the Emden-Fowler equation $-\Delta u=|u|^{p-1}u$ on the hyperbolic space ${\mathbb H}^n$.
We are interested in radial solutions, namely solutions depending only on the geodesic distance from a given point. The critical exponent for such equation is $p=(n+2)/(n-2)$ as in the Euclidean setting, but the properties of the solutions show striking differences with the Euclidean case. While the papers \cite{mancini, bhakta} consider finite energy solutions, we shall deal here with infinite energy solutions and we determine the exact asymptotic behavior of wide classes of finite and infinite energy solutions.

\end{abstract}
\maketitle


\section{\bf Introduction}

In this paper we consider the following nonlinear  elliptic equation
\neweq{triangle}
\Delta u+|u|^{p-1}u=0\qquad\mbox{in }\mathbb H^n,
\endeq
on the simplest example of manifold with negative curvature, the hyperbolic space ${\mathbb H}^n$, in dimension $n\ge3$. $\Delta$ is the Laplace-Beltrami operator on $\mathbb{H}^n$ and we take $p>0$. When posed in the Euclidean space $\R^n$ this equation is known as the Emden-Fowler equation and
the study goes back to Lane \cite{Lane}, Emden \cite{Emden}, Fowler \cite{Fowler},  Chandrasekhar \cite{Chan} and others, and plays an important role in Astrophysics. Attention was focused on the existence and description  of radial solutions. There is a host of later important contributions to the subject; among them we must mention the famous paper by Joseph and Lundgren \cite{JoLu} where a complete classification of radial solutions is done. For more general nonlinear elliptic equations in the Euclidean space $\R^n$  we refer to \cite{BL,gaz,gnn,gnn2,NS1,NS2,PuSer}.

The existence of radial solutions of the Emden-Fowler equation can be addressed in the hyperbolic space in the setting of radial functions provided we define a function to be radial if it depends on the Riemannian distance $r$ from a pole $o$. We recall that several models can be used to describe ${\mathbb H}^n$ in an explicit coordinate system. For instance, one may realize ${\mathbb H}^n$ as an embedded hyperboloid in $\mathbb R^{n+1}$, endowed with the inherited metric. It is also possible to use one of the two Poincar\'e realizations, namely the ball model or the half--space model, in the sense that topologically one can identify ${\mathbb H}^n$ with the unit ball in ${\mathbb R}^n$ or with the upper half--space, each of which endowed with an appropriate metric with the property that the
Riemannian distance from any given point to points approaching the topological boundary tends to $+\infty$. Another possible realization is the Klein model, see \cite{bea, ratc} for a comprehensive discussion.
Because of the structure of the isometry group of $\mathbb H^n$ it is convenient to describe the
hyperbolic space as a {\it model manifold}, see \cite{gw} for details. On such a manifold, a pole $o$ is given and the metric has the form
\[
{\rm d}s^2={\rm d}r^2+f(r)^2{\rm d}\omega^2,
\]
for an appropriate function $f$, where $r$ is the Riemannian distance from the pole $o$ and ${\rm d}\omega^2$ denotes the canonical metric on the unit sphere. The hyperbolic space ${\mathbb H}^n$ is obtained by making the choice $f(r)=\sinh r$. It is then known, see \cite{D} and references therein, that the radial part of the Laplacian has the explicit expression, on radial functions $u$,
\beq\label{lapl}
\Delta_{rad}u(r)=u^{\prime\prime}(r)+(n-1)(\coth r)
u^\prime(r)=\frac1{(\sinh r)^{n-1}}\left[(\sinh r)^{n-1}u^\prime(r)\right]^\prime,
\eeq
and that in such coordinates the volume element is ${\rm d}\mu=(\sinh r)^{n-1}\,{\rm d}r\,{\rm d}\omega_{n-1}$, where ${\rm d}\omega_{n-1}$ is the volume element on the $(n-1)$--dimensional unit sphere $\mathbb{S}^{n-1}$.

 Our aim in this paper is to classify the smooth radial solutions to \eqref{triangle}, which  satisfy the ODE \neweq{equazione}
u''(r)+(n-1)(\coth r)u'(r)+|u(r)|^{p-1}u(r)=0\qquad \mbox{ for} \ r>0\,,
\endeq
together with the initial conditions
\begin{equation}\label{initcond}
u(0)=\alpha\,,\qquad u'(0)=0\,.
\end{equation}

\noindent The mathematical study of this problem was initiated in \cite{mancini,bhakta} for the slightly more general equation $\Delta u+\lambda u+|u|^{p-1}u=0$ in the range $p\in(1,\frac{n+2}{n-2})$ and {\em energy solutions} are  considered. An energy solution is a function in H$^1({\mathbb H}^n)$, which is the natural space where variational methods can be successfully employed.

\medskip

\noindent {\sc Results and methods.}
Here we  study the whole class of radial solutions to \eqref{triangle}--\eqref{initcond} and  consider all values of $p>0$. The study of non-variational solutions is quite natural both in the supercritical case $p\ge \frac{n+2}{n-2}$, where no radial solution belongs to the energy space, but also in the subcritical case $p\in(1,\frac{n+2}{n-2})$, where there exist infinitely many positive solutions to \eqref{triangle} which do not belong to the energy space, see \cite{mancini}.

We determine the intersection properties and the asymptotic behavior at infinity of all radial solutions. In the subcritical case we also prove that there are infinitely many sign-changing solutions of \eqref{triangle}
not included in the energy space; we determine again their asymptotic behavior as well as the asymptotics of the (sign-changing) energy solutions.
We also show that any sign-changing solution has finitely many oscillations, in striking contrast with the Euclidean behavior, see \cite{PuSer}.
Finally, in the sublinear case $0<p<1$, we prove that no positive radial solution to \eqref{triangle} exists and that all sign-changing solutions exhibit infinitely many oscillations.\par

For the proof of our results we construct a generalized Poho\v{z}aev-type functional (notice the alternative spelling Pokhozhaev)  \cite{poho}
in the hyperbolic setting. This construction requires a delicate choice of the test functions involved:
instead of powers of $r$ we use particular combinations of the hyperbolic functions. This functional gives several different information in the three cases $p\ge\frac{n-2}{n+2}$, $1<p<\frac{n-2}{n+2}$, and $0<p<1$. We refer to \cite{BDE, BrezNir, gaz, poho} for information on the roles played by these exponents in $\R^n$. The next step consists in adapting the techniques developed by Ni-Serrin \cite{NS1,NS2} to this new
framework. The decay rate of solutions to \eqref{equazione} follows by a careful reworking of the differential equation at hand combined with the Poho\v{z}aev-type functional.\par
The exact statements of our results are given in next section, at the end of which we shall also discuss briefly, for the sake of completeness, the linear case $p=1$.  In Section \ref{final} we complement our results by further remarks and open problems.

In Section \ref{pohofunctional} we introduce the Poho\v{z}aev-type functional which turns out to be very powerful in the study of the qualitative behavior of solutions to \eqref{equazione} for {\em any} value of $p$.
Section \ref{proofsuper} is devoted to the proofs of the results in the supercritical case.
Sections \ref{proofsub} and \ref{oscillation} deal, respectively, with the proofs of the results concerning positive solutions and sign-changing
solutions in the subcritical case. Finally, in Section \ref{proofsublin} we briefly deal with the sublinear case $p<1$. In these sections we shall also provide numerical simulations and plots of the qualitative properties of solutions.

We conclude this introduction with two remarks: (1) The case of the hyperbolic space with curvature $-c^2$, $c>0$ can be easily reduced to the case $c=1$ that we treat here in detail. Actually, for $c\ne 1$ the radial solutions of \eqref{triangle} satisfy an ODE of the same form
\neweq{equazione.c}
u''(r)+(n-1)c\,\coth (cr)\,u'(r)+|u(r)|^{p-1}u(r)=0\qquad \mbox{ for} \ r>0\,.
\endeq
The change of variables
\begin{equation}\label{UC.intro}
\overline{u}(r)=c^{q}\, u(cr), \quad q=2/(p-1)\,,
\end{equation}
transforms solutions $\overline{u}(r)$ of \eqref{equazione.c} into solutions $u(r)$  of \eqref{equazione}. Note that $c\,\coth (cr)\to 1/r$ as $c\to 0$, so we recover the Euclidean case  in that limit. We will comment later  on some consequences,   see Remark \ref{R(a)}, iii).

\noindent (2) We expect that the study of the elliptic problem \eqref{triangle} might be relevant for the study of the fine asymptotics of the solutions to the corresponding evolutionary problem $u_t=\Delta u^m$, with $1>m=\frac 1p$, as initiated in \cite{BGV1}, in the spirit of the recent results given in \cite{BGV2}. This same elliptic problem has been recently considered in \cite{CM} in the study of existence and stability of finite energy solitons for the subcritical nonlinear Schr\"odinger equation.
Although our results can be extended to the equation
\neweq{triangle2}
\Delta u+f(u)=0\qquad\mbox{in } \ \mathbb H^n,
\endeq
where $f$ satisfies suitable assumptions, in the present paper we limit ourselves to consider the particular case $f(u)=|u|^{p-1}u$.

\section{\bf Classification of radial solutions. Statement of results}\label{statements}

The results strongly depend on the exponent $p$ in three different ranges. When $p\ge\frac{n+2}{n-2}$ we say that $p$ is {\em supercritical}, when
$1<p<\frac{n+2}{n-2}$ we say that $p$ is \textit{subcritical}, when $0<p<1$ we say that $p$ is {\em sublinear}.

\medskip

\noindent $\bullet$ {\sc The supercritical case}. Our main result for this case is the following.

\begin{theorem}\label{supercritico}
For any $p\ge\frac{n+2}{n-2}$ equation \eqref{triangle} admits infinitely many positive radial solutions $u=u(r)$ and infinitely many negative solutions. In fact, all radial solutions $u$ to \eqref{triangle} with $u(0)>0$, $u'(0)=0$, are everywhere positive and decay polynomially at infinity with the following rate
\begin{equation}\label{ratesuper}
\lim_{r\to+\infty}r^{1/(p-1)}u(r)=c(n,p):=\left(\frac{n-1}{p-1}\right)^{1/(p-1)}
\end{equation}
and
\[
\lim_{r\to+\infty}\frac{u^\prime(r)}{u(r)}=\lim_{r\to+\infty}\frac{u^{\prime\prime}(r)}{u^\prime(r)}=0.
\]
On the other hand, for $u(0)<0$, $u'(0)=0$, the solutions are everywhere negative and decay polynomially with just the opposite limit $-c(n,p)$, in \eqref{ratesuper}. In particular, any such solution $u$ belongs to ${\rm L}^q(\mu)$ only for $q=\infty$.
\end{theorem}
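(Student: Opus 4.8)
The plan is to reduce to the case $u(0)=\alpha>0$ and then establish, in order, positivity, convergence to zero, and the sharp decay rate. Since the nonlinearity $s\mapsto|s|^{p-1}s$ is odd, the map $u\mapsto-u$ sends solutions of \eqref{equazione} to solutions, so the statement for $u(0)<0$ follows from the positive case. Local existence, uniqueness and smoothness of the solution of \eqref{equazione}--\eqref{initcond} are standard in spite of the singular coefficient $(n-1)\coth r\sim(n-1)/r$ at the pole, and distinct values of $\alpha>0$ yield distinct solutions, giving infinitely many. First I would rewrite the equation in the divergence form coming from \eqref{lapl}, namely $[(\sinh r)^{n-1}u']'=-(\sinh r)^{n-1}|u|^{p-1}u$, and integrate from $0$ to get $(\sinh r)^{n-1}u'(r)=-\int_0^r(\sinh s)^{n-1}|u(s)|^{p-1}u(s)\,\rd s$, which shows at once that $u$ is strictly decreasing as long as it stays positive.

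For positivity I would invoke the Pohozaev-type functional of Section \ref{pohofunctional}. Suppose, for contradiction, that $u$ has a first zero at some $r_0>0$; then $u>0$ on $(0,r_0)$, $u(r_0)=0$ and $u'(r_0)<0$. The functional vanishes at the pole because of the factor $(\sinh r)^{n-1}$, and its derivative has a sign governed by $\frac{n}{p+1}-\frac{n-2}{2}$, which is $\le0$ exactly in the supercritical range $p\ge\frac{n+2}{n-2}$; at $r_0$, on the other hand, only the kinetic part $\tfrac12 u'(r_0)^2$ survives and forces a strictly positive value, contradicting $P(r_0)<P(0)=0$. Hence $u>0$ for all $r$, and $u$ decreases to some limit $L\ge0$. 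To rule out $L>0$ I note that if $u\ge L>0$ then the integral identity together with $\int_0^r(\sinh s)^{n-1}\,\rd s\sim(\sinh r)^{n-1}/(n-1)$ forces $u'(r)\lesssim-L^p/(n-1)<0$ for large $r$, so $u\to-\infty$, which is impossible; thus $u\to0$. Finally the energy $E(r)=\tfrac12 u'(r)^2+\frac{1}{p+1}u(r)^{p+1}$ obeys $E'(r)=-(n-1)(\coth r)u'(r)^2\le0$, so $E$ decreases to some $E_\infty\ge0$; since $u\to0$ we get $u'(r)^2\to2E_\infty$, and as $u$ decreases to $0$ the only possibility is $E_\infty=0$, so $u'\to0$ as well.

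For the sharp rate \eqref{ratesuper} the plan is to make rigorous the dominant balance $u'\sim-\frac{1}{n-1}u^p$ hidden in \eqref{equazione}. Dividing \eqref{equazione} by $u^p$ gives $\frac{u''}{u^p}+(n-1)(\coth r)\frac{u'}{u^p}+1=0$; granting that $u''/u^p\to0$ and using $\coth r\to1$, one gets $u'/u^p\to-\frac{1}{n-1}$. Since $(u^{1-p})'=-(p-1)u'/u^p$, this yields $(u^{1-p})'\to\frac{p-1}{n-1}$, and as a primitive of a function with limit $\ell$ grows like $\ell r$, we obtain $u(r)^{1-p}/r\to\frac{p-1}{n-1}$, that is $r^{1/(p-1)}u(r)\to\left(\frac{n-1}{p-1}\right)^{1/(p-1)}=c(n,p)$, which is \eqref{ratesuper}. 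The remaining limits follow from the same balance: $u'/u\sim-\frac{1}{n-1}u^{p-1}\to0$, while rewriting \eqref{equazione} as $u''/u'=-(n-1)\coth r-u^p/u'$ with $u^p/u'\to-(n-1)$ gives $u''/u'\to0$. For the integrability claim I would use $\rd\mu=(\sinh r)^{n-1}\,\rd r\,\rd\omega_{n-1}$: since $|u(r)|^q(\sinh r)^{n-1}\sim c(n,p)^q\,r^{-q/(p-1)}2^{-(n-1)}e^{(n-1)r}$, the integral diverges for every finite $q$, whereas $u$ is bounded, so $u\in\LL^q(\mu)$ only for $q=\infty$.

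The hard part will be the reduction $u''/u^p\to0$, equivalently the convergence of $w(r)=r^{1/(p-1)}u(r)$ to the constant $c(n,p)$. The obstruction is that in the equation for $w$ the damping term $(n-1)w'$ appears at order one, while the nonlinear and geometric terms that actually \emph{select} the value $c(n,p)$ enter only at order $1/r$; thus $w$ relaxes onto a slow manifold and pinning down its limit demands more than the energy estimate. Here I expect to need the careful reworking of \eqref{equazione} together with the Pohozaev-type functional: first extract rough two-sided barriers $c_1\,r^{-1/(p-1)}\le u(r)\le c_2\,r^{-1/(p-1)}$, then upgrade them to regular variation of $u$ (so that $p\,u'/u\to0$), which legitimizes replacing $\int_0^r(\sinh s)^{n-1}u^p\,\rd s$ by its Laplace-type leading term $(\sinh r)^{n-1}u(r)^p/(n-1)$ in the integral identity; this gives $u'\sim-\frac{1}{n-1}u^p$ directly, and hence $u''/u^p\to0$.
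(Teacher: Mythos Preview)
Your first two paragraphs are essentially the paper's argument: positivity via the Poho\v{z}aev functional $\Psi$ of \eqref{PSI} (the contradiction at a putative first zero is exactly how the paper proceeds), and $u,u'\to0$ via the Lyapunov functional $F=\tfrac12 u'^2+\tfrac1{p+1}u^{p+1}$. One imprecision worth fixing: in hyperbolic space $\Psi'(r)=u'(r)^2\psi_p(r)$ with the $r$-dependent factor $\psi_p$ of \eqref{psip}, not the Euclidean constant $\frac{n}{p+1}-\frac{n-2}{2}$; proving $\psi_p<0$ in the supercritical range is Lemma~\ref{psipsi} and needs the short computation \eqref{hh}--\eqref{hprime}, not just the numerical inequality $p\ge\frac{n+2}{n-2}$.

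The genuine gap is in your route to the sharp rate. You propose to first obtain two-sided barriers $c_1r^{-1/(p-1)}\le u\le c_2r^{-1/(p-1)}$, then deduce $u'/u\to0$, then justify the Laplace-type replacement of $\int_0^r(\sinh s)^{n-1}u^p\,\rd s$ by $(\sinh r)^{n-1}u^p/(n-1)$. But you give no mechanism for the barriers, and the integral replacement already \emph{presupposes} $pu'/u\to0$ (slow variation of $u^p$ against $e^{(n-1)r}$), so the argument is circular as written. The paper's route is different and avoids this. First, from $\Psi(r)<-\delta$ it extracts the exponential \emph{lower} bound $u(r)\ge C_0e^{-(n-1)r/2}$ (Lemma~\ref{lowerbound}); a bootstrap (Lemma~\ref{exp}) then shows that any exponential \emph{upper} bound $u\le Ce^{-\beta r}$ self-improves to $u\le Ce^{-p\beta r}$ and eventually contradicts the lower bound, so $u$ cannot decay exponentially. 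Second, the strict inequality $\Psi<0$ together with \eqref{triviallimit} yields the one-sided bound $u'+\kappa u>0$ with some $\kappa<n-1$ (inequality \eqref{second lower}); combined with the previous step and an analysis of the stationary points of $\Theta=u'/u$ (at which \eqref{equazione} gives an algebraic relation between $\Theta$ and $u$), this forces $u'/u\to0$ (Lemma~\ref{theta}). The same stationary-point device applied to $\Theta_p=u'/u^p$ then gives $u'/u^p\to-\tfrac1{n-1}$ directly, after which your integration of $(u^{1-p})'$ is exactly how the paper concludes. The two ideas you are missing are thus (i) the exponential lower bound from $\Psi$ paired with the no-exponential-upper-bound bootstrap, and (ii) the stationary-point analysis of $u'/u$ and $u'/u^p$, which replaces your unproven polynomial barriers.
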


This result is qualitatively similar to the Euclidean case, but the power-law decay determined in \eqref{ratesuper} is different. We recall that solutions in the
Euclidean case decay like $r^{-2/(p-1)}$, see \cite[Theorem 2.2]{NS2}.

As a byproduct of the proof of Theorem \ref{supercritico} we obtain the following non-existence result for
solutions to the Dirichlet problem in a ball:

\begin{corollary}\label{ball}
If $p\ge\frac{n+2}{n-2}$, then for any radius $R>0$, equation \eqref{triangle} admits no positive radial solution $u=u(r)$ satisfying $u(R)=0$.
\end{corollary}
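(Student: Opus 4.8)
The plan is to argue by contradiction, reducing the Dirichlet problem on the ball to the initial value problem \eqref{equazione}--\eqref{initcond} already analyzed in Theorem \ref{supercritico}. Suppose that for some $R>0$ there exists a positive radial solution $u=u(r)$ of \eqref{triangle} on the geodesic ball of radius $R$ centered at the pole $o$, with $u>0$ on $[0,R)$ and $u(R)=0$. Since $u$ is a smooth radial function on a ball, its radial derivative must vanish at the center, so $u'(0)=0$, while positivity gives $u(0)=\lim_{r\to0^+}u(r)\ge0$. I would first rule out the degenerate case $u(0)=0$: if both $u(0)=0$ and $u'(0)=0$ held, then $u$ would solve the same Cauchy problem as the trivial solution $u\equiv0$, and uniqueness (discussed below) would force $u\equiv0$, contradicting positivity. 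Hence $\alpha:=u(0)>0$, and $u$ solves \eqref{equazione}--\eqref{initcond} with this $\alpha$ on the interval $[0,R]$.

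The core of the argument is then uniqueness for \eqref{equazione}--\eqref{initcond}. Although the coefficient $(n-1)\coth r$ is singular at $r=0$, behaving like $(n-1)/r$, the origin is a regular singular point and the prescription $u(0)=\alpha$, $u'(0)=0$ singles out a unique smooth solution. This is seen by rewriting \eqref{equazione} using the divergence form \eqref{lapl}, integrating twice, and using that $(\sinh r)^{n-1}u'(r)\to0$ as $r\to0^+$, which yields the integral formulation
\[
u(r)=\alpha-\int_0^r (\sinh s)^{1-n}\int_0^s (\sinh\sigma)^{n-1}|u(\sigma)|^{p-1}u(\sigma)\,{\rm d}\sigma\,{\rm d}s,
\]
to which a standard fixed-point/contraction argument on a small interval $[0,\delta]$ applies (the weight $(\sinh s)^{1-n}$ is integrable against the inner integral near $0$), and the solution then continues uniquely. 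Consequently the candidate Dirichlet solution $u$ must coincide on $[0,R]$ with the restriction of the globally defined solution $\widetilde u$ provided by Theorem \ref{supercritico} for the same initial data $\alpha>0$.

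Finally I would invoke the positivity conclusion of Theorem \ref{supercritico}: every solution of \eqref{equazione}--\eqref{initcond} with $\widetilde u(0)=\alpha>0$ and $\widetilde u'(0)=0$ is everywhere positive on $[0,+\infty)$, and in particular $\widetilde u(R)>0$. Since $u\equiv\widetilde u$ on $[0,R]$, this gives $u(R)>0$, contradicting $u(R)=0$. Therefore no positive radial Dirichlet solution can exist, which proves the corollary.

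The main obstacle is not the contradiction itself, which is immediate once the two solutions are identified, but the uniqueness statement at the singular origin $r=0$: it is precisely this that decouples the boundary datum $u(R)=0$ from the Cauchy data and forces the ball solution to be the restriction of the everywhere-positive global solution. Once uniqueness is secured, Theorem \ref{supercritico} does all the remaining work.
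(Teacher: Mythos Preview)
Your proof is correct and follows the same idea as the paper, which obtains the corollary as a direct byproduct of the proof of Theorem \ref{supercritico}. The paper's packaging is marginally more direct: rather than invoking uniqueness to identify the Dirichlet solution with a global one, it simply observes that the Dirichlet solution is itself a local solution of \eqref{cauchy} on $[0,R]$, and the Poho\v{z}aev-functional step in the proof of Theorem \ref{supercritico} (if a first zero $\rho$ existed then $\Psi(\rho)=\phi_n(\rho)\,u'(\rho)^2/2>0$, contradicting Lemma \ref{psipsi}) applies to it verbatim---so your uniqueness discussion, while correct, is not strictly needed.
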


This result may also be obtained by adapting the arguments in \cite{stap} valid for $p=\frac{n+2}{n-2}$ to the supercritical range $p\ge\frac{n+2}{n-2}$

\medskip

\noindent$\bullet$ {\sc The subcritical case}. A first main novelty of this case is the existence of
a positive global solution having fast decay at infinity (a hyperbolic nonlinear ground state). This type of solution has been obtained by Mancini-Sandeep \cite[Theorems 1.3-1.4, Lemma 3.4]{mancini} using variational methods in the space
$$H^1_r({\mathbb H}^n)=\{u\in {\rm L}^2(\mu);\, \nabla u\in {\rm L}^2(\mu),\, u=u(r)\},$$
where $\mu$ is the Riemannian measure, $\nabla$ is the Riemannian gradient and $r$ is the Riemannian distance from a given pole $o$.  We state their result for convenience.

\noindent\textbf{Theorem A} \cite[Theorems 1.3-1.4, Lemma 3.4]{mancini}\label{sottocritico}
{\sl Let $1<p<\frac{n+2}{n-2}$. There exists a unique function $U\in H^1_r({\mathbb H}^n)$ which is a radial, smooth, positive and bounded solution
to the equation \eqref{triangle}. The function $U$ is (radially) decreasing and there exists $\overline{c}>0$  such that
\beq\label{n-1}
\lim_{r\to+\infty}e^{(n-1)r}U(r)=\overline{c}.
\eeq}

Of course, there exists a unique negative ground state   which is given by $-U$.
We use the ground state $U$ in the classification of all radial solutions to \eqref{equazione}--\eqref{initcond}. Without loss of generality we restrict ourselves
to the case $u(0)=\alpha>0$.
One first class of radial solutions is given by the next result.

\begin{theorem}\label{polynomial}
Let $1<p<\frac{n+2}{n-2}$ and let $U$ be the positive ground state described in Theorem A. Each local solution $u$ to \eqref{equazione}--\eqref{initcond} satisfying
\neweq{shooting}
0<u(0)<U(0)
\endeq
can be extended as a positive solution for $0<r<\infty$, hence generating a positive radial solution to \eqref{triangle} on ${\mathbb H}^n$. Moreover, there exists a unique $r_0>0$ such
that $u(r_0)=U(r_0)$ and the asymptotic behavior is given by
\beq\label{pol}
\lim_{r\to+\infty}r^{1/(p-1)}u(r)=c(n,p)\,,
\eeq
the constant of \eqref{ratesuper}. None of these slow-decaying solutions belongs to the energy space.
\end{theorem}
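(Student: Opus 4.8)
The plan is to combine a comparison argument with the ground state $U$, which yields global positivity together with the single intersection, with an asymptotic analysis of the ODE \eqref{equazione}, which yields the polynomial rate \eqref{pol}. First I would record the elementary dynamics. Local existence, uniqueness and smoothness of $u$ near the regular singular point $r=0$ are standard. Multiplying \eqref{equazione} by $u'$ shows that the energy $E(r)=\frac12 u'(r)^2+\frac1{p+1}|u(r)|^{p+1}$ satisfies $E'(r)=-(n-1)(\coth r)u'(r)^2\le0$, so $E$ is bounded and the solution is global. As long as $u>0$, the divergence form in \eqref{lapl} gives $[(\sinh r)^{n-1}u'(r)]'=-(\sinh r)^{n-1}u(r)^p<0$; since this quantity vanishes at $r=0$, we get $u'<0$, i.e. $u$ is strictly decreasing while positive.

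Next I would carry out the comparison. Introduce the Wronskian $W(r)=(\sinh r)^{n-1}[u'(r)U(r)-u(r)U'(r)]$ and the ratio $g=u/U$, so that $W=(\sinh r)^{n-1}U^2 g'$ and, using the equations for $u$ and $U$, $W'(r)=(\sinh r)^{n-1}u(r)U(r)\left[U(r)^{p-1}-u(r)^{p-1}\right]$. Since $p>1$, the sign of $W'$ is that of $U-u$. Because $W(0)=0$ and \eqref{shooting} gives $u<U$ near $0$, $W$ is initially positive, hence $g'>0$ and $g$ increases from $g(0)=\alpha/U(0)\in(0,1)$; in particular $u\ge(\alpha/U(0))\,U>0$ as long as $u\le U$, which rules out a zero of $u$ in this regime. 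Were $u\le U$ for all $r>0$, the polynomial rate established below would apply to $u$ and contradict the exponential bound $u\le CU\le C'e^{-(n-1)r}$ coming from \eqref{n-1}; hence there is a first $r_0$ with $u(r_0)=U(r_0)$, and $W(r_0)>0$ forces $u'(r_0)>U'(r_0)$, so the crossing is transversal and $u>U$ just beyond $r_0$. To complete global positivity and the uniqueness of $r_0$ I must show $u>U$ on $(r_0,\infty)$; writing $h=u-U$, it solves the linear equation $h''+(n-1)(\coth r)h'+q(r)h=0$ with $q=(u^p-U^p)/(u-U)>0$, and I would combine the non-oscillatory character of this equation for large $r$ (where $q\to0$ and $\coth r\to1$) with the rate comparison to exclude a second crossing in the bounded middle range. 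This uniqueness is the first delicate point.

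For the decay rate \eqref{pol}, note that $u$ is positive and decreasing, hence has a limit $L\ge0$; $L>0$ is impossible, as it would make $[(\sinh r)^{n-1}u']'\le-cL^p(\sinh r)^{n-1}$ drive $u'$ to $-\infty$, so $u\to0$. The leading balance in \eqref{equazione} as $r\to\infty$ is between $(n-1)(\coth r)u'$ (with $\coth r\to1$) and $u^p$, the term $u''$ being of lower order; heuristically $u'\sim-u^p/(n-1)$. To make this rigorous I would set $w=u^{1-p}$, so that $w'=-(p-1)u^{-p}u'$, and prove $w'\to(p-1)/(n-1)$, whence $w\sim\frac{p-1}{n-1}r$ and $u=w^{-1/(p-1)}\sim c(n,p)\,r^{-1/(p-1)}$, which is exactly \eqref{pol}. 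Controlling the error terms — showing that $u''$ is genuinely negligible and that $u'/u^p$ does not oscillate — is where I would invoke the monotone Poho\v{z}aev-type functional of Section \ref{pohofunctional}; this asymptotic analysis is identical in spirit to the one already carried out in Theorem \ref{supercritico}, which produces the same constant $c(n,p)$. This step, together with the uniqueness above, is the main obstacle.

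Finally, the failure of the energy condition is immediate from \eqref{pol}: since $u(r)^2\,{\rm d}\mu\sim c(n,p)^2 r^{-2/(p-1)}(\sinh r)^{n-1}\,{\rm d}r$ and the exponential volume growth overwhelms the polynomial decay, $\int u^2\,{\rm d}\mu=+\infty$, so $u\notin{\rm L}^2(\mu)$ and a fortiori $u\notin H^1_r({\mathbb H}^n)$.
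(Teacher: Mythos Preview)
Your overall architecture matches the paper's, but there are two genuine gaps in the asymptotic step, and one point where you lean on a claim that is false in the subcritical range.

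\textbf{The missing ingredient: excluding exponential decay.} In the supercritical proof the bound \eqref{second lower}, and hence the whole chain Lemma~\ref{theta} $\Rightarrow$ $u'/u^p\to-\tfrac1{n-1}$ $\Rightarrow$ \eqref{pol}, rests on Lemma~\ref{exp}, i.e.\ on knowing that $u$ has \emph{no} exponential upper bound. You invoke ``the polynomial rate established below'' to rule out $u\le U$, but that rate cannot be established without first knowing Lemma~\ref{exp}; as written this is circular. The paper breaks the circle differently: it shows (Lemma~\ref{upperbound} and the lines after it) that any exponential upper bound self-improves to the exact limit \eqref{n-1}, so by the uniqueness part of Theorem~A one would have $u=U$, contradicting $u(0)<U(0)$. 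Once this is in place, Lemma~\ref{exp} holds and your $w=u^{1-p}$ computation can be completed. You should insert this step explicitly.

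\textbf{The Poho\v{z}aev functional is not monotone here.} You write that you would ``invoke the monotone Poho\v{z}aev-type functional''. In the subcritical range $\psi_p$ changes sign (see \eqref{Rnp}), so $\Psi$ is only \emph{eventually} decreasing (Lemma~\ref{subsub}), and one does not know the sign of $\lim_{r\to\infty}\Psi(r)$. The paper therefore splits into two cases. If the limit is negative, one recovers \eqref{second lower} for large $r$ and proceeds exactly as in the supercritical proof. If the limit is nonnegative, $\Psi(r)>0$ for all $r$, which after using $\phi_n(r)<(\sinh r)^{n-1}/(n-1)$ becomes a quadratic inequality in $u'$; one branch gives exponential decay (excluded above), the other gives $u'(r)>-\tfrac{2}{n-1}u(r)^p$, hence the polynomial lower bound directly. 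Your sketch collapses these two cases into one and misstates the monotonicity; this is where the argument would actually fail without the dichotomy.

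\textbf{On the single intersection.} Your Wronskian/Sturm approach is a reasonable alternative to what the paper does, namely simply cite \cite[Corollary 4.6]{mancini}. But as you note yourself, the argument you give does not yet exclude a second crossing: after $r_0$ one has $W'<0$, so $W$ may well become negative. The non-oscillation of $h''+(n-1)(\coth r)h'+qh=0$ for large $r$ only bounds the number of zeros, it does not pin it to one. If you want to avoid citing \cite{mancini}, the cleanest route is again through the exponential-decay exclusion: a second crossing would trap $u$ below $U$ from some point on, forcing exponential decay and hence $u=U$.

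Your final paragraph on $u\notin{\rm L}^2(\mu)$ is fine.
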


The first part of the result is known from paper \cite{mancini}, and our contribution is the bound \eqref{pol}, which is exactly the same as in the supercritical case, and makes the solutions non-variational.

A second main difference with the supercritical case is the presence of sign-changing solutions, which we now discuss. Recall that in the Euclidean case, Pucci-Serrin \cite[Theorem 15]{PuSer} show that all sign-changing solutions in $\R^n$ have infinitely many zeros. We prove that this never happens in $\mathbb H^n$, namely any sign-changing radial solution to \eqref{triangle} has a finite number of zeros. Finally, we show that there exists infinitely many solutions of infinite energy.

\begin{theorem}\label{osc}
Let $p$ and  $U$ be as in Theorem \ref{polynomial}. If $u$ is a  solution to \eqref{equazione}--\eqref{initcond} with $u(0)>U(0)$, then it is sign-changing.
Moreover:

\noindent (i) the solution with initial data $\alpha$ vanishes for the first time at a finite point $r_\alpha$
and the map  $\alpha\mapsto r_\alpha$ is a monotone decreasing 1-1 map from $(U(0),\infty)$ onto $(0,\infty)$;

\noindent (ii) any radial sign-changing solution has finitely many zeros;

\noindent (iii) there exist infinitely many radial sign-changing solutions $u\not\in H^1(\mathbb{H}^n)$, having exactly one zero, and satisfying
\neweq{negativelimit}
\lim_{r\to+\infty}r^{1/(p-1)}u(r)=-c(n,p),
\endeq\nobreak
the constant of \eqref{ratesuper};

\noindent (iv) for any integer $k\ge1$ there exists infinitely many solutions to \eqref{equazione}--\eqref{initcond} having exactly $k$ zeros;

\noindent (v) any radial sign-changing solution $u\in H^1(\mathbb H^n)$ satisfies \eqref{n-1} for some real constant $\overline{c}$.
\end{theorem}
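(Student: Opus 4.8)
The plan is to run a shooting argument in the initial height $\al=u(0)$, using the ground state $U$ of Theorem A as a separatrix and exploiting two structural tools: the energy $E(r)=\frac12 u'(r)^2+\frac1{p+1}|u(r)|^{p+1}$, which by \eqref{equazione} satisfies $E'(r)=-(n-1)(\coth r)\,u'(r)^2\le0$, and the hyperbolic Poho\v{z}aev-type functional of Section \ref{pohofunctional}, which will pin down the two admissible decay rates at infinity. Throughout I write the radial operator in divergence form $\frac1{s^{n-1}}(s^{n-1}u')'$ with $s=\sinh r$, so that the Wronskian computations below are transparent.

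\textbf{Sign change and part (i).} First I would show that $\al>U(0)$ forces a sign change. The set of data giving positive global solutions is downward closed by the Wronskian comparison below and contains $U(0)$. The ground state is the unique fast-decaying (hence $H^1$) positive solution, while every other positive global solution decays slowly like $c(n,p)r^{-1/(p-1)}$; since the slow-decay family is open in $\al$ (a stability fact extracted from the Poho\v{z}aev functional and the Ni-Serrin scheme), the separatrix value must itself be fast-decaying, i.e. equal to $U(0)$. Hence for $\al>U(0)$ the solution has a finite first zero $r_\al$, with $u'(r_\al)<0$ by uniqueness. For the monotone bijection I would use, for two data $\al_1<\al_2$, the Wronskian $V(r)=s^{n-1}(u_1'u_2-u_2'u_1)$: one has $V(0)=0$ and $V'=-s^{n-1}u_1u_2(u_1^{p-1}-u_2^{p-1})$, and a short sign analysis (the two solutions are positive and ordered near $0$) rules out $u_1$ vanishing before $u_2$, giving $r_{\al_1}>r_{\al_2}$, so $\al\mapsto r_\al$ is strictly decreasing. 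Continuity follows from continuous dependence together with transversality $u'(r_\al)\ne0$; the endpoints come from $u(\cdot;\al)\to U>0$ locally uniformly as $\al\to U(0)^+$ (so $r_\al\to\infty$) and from the vanishing of the first zero as $\al\to\infty$ via the Euclidean-type behavior of the equation near the pole, where $\coth r\approx 1/r$ (so $r_\al\to0$).

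\textbf{Finite oscillation and parts (ii)--(v).} The heart of the matter is the tail analysis. For large $r$ the equation is a constant-strong-damping oscillator $u''+(n-1)u'+|u|^{p-1}u\approx0$; once $|u|$ is small the restoring coefficient $|u|^{p-1}$ drops below the overdamping threshold $(n-1)^2/4$, so no further zero can occur and $u$ settles to a definite sign. Combined with the monotonicity of $E$, which bounds the number of large excursions, this yields (ii): finitely many zeros. After the last zero the single-signed tail obeys, by the Poho\v{z}aev functional, exactly one of two rates: the slow rate $\pm c(n,p)r^{-1/(p-1)}$, which is never in $H^1$ since $r^{-1/(p-1)}$ is not in ${\rm L}^2(\mu)$ because of the $e^{(n-1)r}$ volume growth, or the fast rate $e^{-(n-1)r}$, which lies in $H^1$. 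For (iii) I take $\al$ in a right-neighborhood of $U(0)$: then $r_\al$ is large, $u$ crosses once, and $-u$ thereafter behaves as a positive slow-decay solution, giving \eqref{negativelimit}; this persists on a whole interval of $\al$, hence for infinitely many solutions. For (iv) I track the zero count $N(\al)$: it is finite, tends to $\infty$ as $\al\to\infty$, and changes only across the discrete data at which the tail switches from a fixed sign to a crossing, so $N$ is locally constant and equal to $k$ on an open interval of data for every $k\ge1$. Those exceptional data are precisely the fast-decaying ones, and there the single-signed tail satisfies \eqref{n-1}, which is (v).

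\textbf{Main obstacle.} The principal difficulty is the large-$r$ analysis that simultaneously delivers the finite-oscillation property (ii) and the dichotomy of exact decay rates. Unlike the Euclidean case, where the damping $(n-1)/r\to0$ permits infinitely many oscillations, here the constant damping $n-1>0$ must be shown to dominate the vanishing nonlinearity; making this quantitative, and extracting the precise constants $c(n,p)$ and $\overline{c}$, is where the hyperbolic Poho\v{z}aev-type functional of Section \ref{pohofunctional}, built from combinations of hyperbolic functions rather than powers of $r$, is indispensable, and it is the step I expect to require the most care.
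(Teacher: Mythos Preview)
Your overdamping heuristic for (ii) is exactly the paper's argument: once $|u|^{p-1}<(n-1)^2/4$ at a critical point, one replaces $\coth r$ by $1$ to get a linear constant-coefficient inequality whose explicit integration forbids any further zero. Likewise, your treatment of (iii) (take $\al$ just above $U(0)$, one zero, slow tail) and of (iv) (zeros enter one at a time; blow-up to the Euclidean equation to get arbitrarily many) matches the paper's scheme.

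The substantive divergence is in the opening claim that $u(0)>U(0)$ forces a sign change. You argue that the slow-decay family is \emph{open} in $\al$, so the supremum of positive data must be fast-decaying, hence equal to $U(0)$. This openness is asserted but not proved, and it is not a triviality: continuous dependence is only local in $r$, and one must rule out that the boundary datum $A$ itself is slow-decaying while every $\al>A$ loses positivity at some $R(\al)\to\infty$. The paper avoids this issue entirely. It first shows by a blow-up to the Euclidean problem that large $\al$ are sign-changing; then it proves that the sign-changing set $P^c$ is \emph{connected}, invoking the uniqueness of the radial Dirichlet problem in balls from \cite{mancini}; finally, to pin down $A=U(0)$ it runs a \emph{lap-number} argument on $v_\al=u_\al-U$, showing via a Sturm-type comparison that $Z(v_\al,I_\al)=1$ throughout, which forces any hypothetical $u_A$ with $A>U(0)$ to stay below $U$ at infinity and hence to be fast-decaying, contradicting uniqueness of $U$. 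This is genuinely different from your route.

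Two further points. For the monotone bijection in (i) the paper does not use your Wronskian directly; it again quotes the Mancini--Sandeep uniqueness of the positive radial Dirichlet solution in every ball, which immediately makes $\al\mapsto r_\al$ injective and, together with $r_\al\to\infty$ as $\al\downarrow U(0)$ and $r_\al\to0$ as $\al\to\infty$, onto. For (v) you identify the $H^1$ sign-changing solutions with the ``exceptional'' shooting levels and infer \eqref{n-1}; the paper instead takes as input the exponential upper bound for $H^1$ solutions proved in \cite{bhakta}, bootstraps it via Lemma \ref{upperbound} to $|u|\le c\,e^{-(n-1)r}$, and then extracts the exact limit as in \eqref{claimderivative}--\eqref{exactbound}. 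Your route to (v) is incomplete as written, since you have not shown that every $H^1$ tail actually attains a finite nonzero limit for $e^{(n-1)r}u(r)$.
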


\begin{remark}\label{R(a)} \em i)  As a corollary of our results, we see that we can identify the solution $U$ with the separatrix between the sign-changing class from the globally positive radial solutions in hyperbolic space. In particular all radial solutions $u$ satisfying $u(0)>U(0)$ change sign.

\noindent ii) The L$^\infty$-norm $U(0)$  of the variational solution $U(r)$ is the optimal a priori bound for all positive radial and global solutions in the subcritical case. Sign-changing solutions have no a priori bound.

\noindent iii) When we work in the hyperbolic space with curvature $-c^2\ne -1$ the rescaling stated in \eqref{UC.intro} implies that the ground state is $U_c(r)=c^{2/(p-1)}U(cr)$. Therefore the a priori bound is
$$
M_c=\sup_{r\ge 0} U_c(r)= c^{2/(p-1)}U(0)\,,
$$
which goes to zero as $c\to 0$. This explains how the hyperbolic ground state disappears in the Euclidean limit.

\noindent iv) In fact, from our proof one sees that Item (iv) can be complemented with the statement that for any integer $k\ge1$ there exists $\alpha_k>0$ such
that if $u(0)>\alpha_k$, then the solution to \eqref{equazione}--\eqref{initcond} has at least $k$ zeros.

\end{remark}\rm

\medskip

As for previous results, it was proved in \cite[Proposition 4.4]{mancini} that the corresponding Dirichlet problem admits a unique radial positive solution
in any ball of finite radius. Moreover, Bhakta-Sandeep \cite[Theorem 5.1]{bhakta} showed that there exist infinitely many sign-changing solutions to \eqref{triangle}, which can be chosen to be radial and belonging to $H^1(\mathbb H^n)$ and that \cite[Theorem 4.2]{bhakta} any solution in the energy space, not necessarily radial, satisfies the bound
\beq\label{spurio}
|u(x)|\le C\,e^{-\frac{n-2}2r}
\eeq
where $r=\varrho(x,o)$, $o\in\mathbb H^n$ is the pole and $\varrho$ is the Riemannian distance. Moreover, the proof of \cite[Theorem 3.1]{bhakta}
shows that for radial solutions the upper bound \eqref{spurio} can be improved from $\frac{n-2}2$ to $\frac{n-1}2$. We show that such solutions
satisfy the stronger property \eqref{n-1} for a suitable real constant $\overline{c}$ and we discuss their oscillation features.

\medskip

\medskip

\noindent$\bullet$ {\sc The sublinear case}. In this case we have no globally positive solutions at all. Moreover all sign-changing solutions have
infinitely many zeros and ``slow'' decay at infinity in the sense that the bound $|u(r)|\le Ce^{-(n-1)r}$ cannot hold for all $r>0$ and a suitable $C>0$, contrary to \eqref{n-1}.

\begin{theorem}\label{sublinear}
Let $0<p<1$. Then there exists no positive radial solution to \eqref{triangle}. All radial solutions to \eqref{triangle} change sign infinitely many
times and
\neweq{supinf}
\limsup_{r\to+\infty}\ e^{\frac{n-1}{p+1}r}u(r)>0\ ,\qquad\liminf_{r\to+\infty}\ e^{\frac{n-1}{p+1}r}u(r)<0\ .
\endeq
\end{theorem}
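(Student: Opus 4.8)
The plan is to read everything off the natural energy
\[
E(r)=\tfrac12\,u'(r)^2+\tfrac{1}{p+1}|u(r)|^{p+1},
\]
whose derivative along \eqref{equazione} is $E'(r)=-(n-1)(\coth r)\,u'(r)^2\le 0$. Hence $E$ is nonincreasing; in particular $u$ and $u'$ remain bounded, so the local solution of \eqref{equazione}--\eqref{initcond} is global and $E(r)\downarrow E_\infty\ge 0$. Since \eqref{equazione} is odd in $u$, in proving that no solution keeps one sign I may assume, by replacing $u$ with $-u$ if necessary, that $u>0$ on a tail $[R,\infty)$.

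First I would establish the oscillation statement (which also kills positive solutions). If $u>0$ on $[R,\infty)$, then at any critical point $r_0$ equation \eqref{equazione} gives $u''(r_0)=-u(r_0)^p<0$, so every critical point is a strict local maximum; as two maxima would enclose a minimum, $u$ has at most one critical point and is eventually monotone. Using the divergence form in \eqref{lapl}, namely $[(\sinh r)^{n-1}u']'=-(\sinh r)^{n-1}u^p$: the increasing alternative is impossible because $(\sinh r)^{n-1}u'$ would be positive yet equal to $(\sinh R)^{n-1}u'(R)-\int_R^r(\sinh s)^{n-1}u^p\,ds\to-\infty$; and if $u$ decreases to a limit $L>0$, dividing the same identity by $(\sinh r)^{n-1}$ and using $\int_R^r(\sinh s)^{n-1}\,ds/(\sinh r)^{n-1}\to\frac1{n-1}$ forces $u'(r)\le-\frac{L^p}{2(n-1)}<0$, so $u\to-\infty$, a contradiction. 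Thus $u\to 0^+$. I then pass to normal form: the substitution $u=(\sinh r)^{-(n-1)/2}v$ removes the first-order term and turns \eqref{equazione} into $v''+Q(r)v=0$ with
\[
Q(r)=u(r)^{p-1}-\frac{(n-1)^2}{4}-\frac{(n-1)(n-3)}{4\sinh^2 r}.
\]
Since $u\to0^+$ and $p<1$, the term $u^{p-1}\to+\infty$, so $Q(r)\to+\infty$; by the Sturm comparison theorem $v''+Qv=0$ is oscillatory and has no one-signed solution on a half-line, contradicting $v>0$. This proves that every radial solution changes sign infinitely often, whence in particular \eqref{triangle} has no positive radial solution.

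It remains to prove the slow decay \eqref{supinf}. I would first upgrade to $E_\infty=0$: were $E_\infty>0$, the turning-point identity $E=\frac1{p+1}|u|^{p+1}$ would keep the oscillation amplitude bounded below with bounded periods, so $\int^\infty(u')^2\,dr=\infty$, contradicting $\int^\infty(\coth r)(u')^2\,dr=\frac{E(R)-E_\infty}{n-1}<\infty$; hence $u(r)\to0$. The core of the argument is a lower bound $E(r)\ge c\,e^{-(n-1)r}$ with the sharp constant $n-1$, which I obtain from the Pohozaev-type functional
\[
\Phi(r)=e^{(n-1)r}\Big[\tfrac12 u'^2+\tfrac{1}{p+1}|u|^{p+1}+a\,u\,u'\Big],\qquad \tfrac{n-1}{2}<a<\tfrac{n-1}{p+1}.
\]
A direct computation using \eqref{equazione} gives $\Phi'(r)=e^{(n-1)r}\big[c_1(r)u'^2+c_2|u|^{p+1}+\gamma(r)u u'\big]$, where $c_1(r)=\tfrac{n-1}{2}-(n-1)\coth r+a\to a-\tfrac{n-1}{2}>0$, $c_2=\tfrac{n-1}{p+1}-a>0$, and $\gamma(r)=a(n-1)(1-\coth r)\to0$. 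Because $u\to0$ one has $u^2\le|u|^{p+1}$ for large $r$, so the vanishing cross term $\gamma u u'$ is absorbed by $c_1u'^2+c_2|u|^{p+1}$ and $\Phi'\ge0$ on $[R,\infty)$. Choosing $R$ to be a large local maximum, where $\Phi(R)=e^{(n-1)R}E(R)>0$, monotonicity gives $\Phi(r)\ge\Phi(R)>0$; since $\tfrac12 u'^2+\tfrac1{p+1}|u|^{p+1}+auu'\le C\,E$ for small $u$, this is exactly $E(r)\ge c\,e^{-(n-1)r}$. Evaluating at the local maxima $s_k\to\infty$ and minima $t_k\to\infty$, where $E=\tfrac1{p+1}|u|^{p+1}$, yields $|u(s_k)|,|u(t_k)|\ge c'\,e^{-\frac{n-1}{p+1}r}$ with the correct signs, which is precisely \eqref{supinf}.

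The main obstacle is capturing the sharp exponent $n-1$ (equivalently $\tfrac{n-1}{p+1}$ in \eqref{supinf}) rather than twice that: the naive bound $(u')^2\le2E$ only gives $E'\ge-2(n-1)(\coth r)E$, hence $E\ge c\,e^{-2(n-1)r}$ and the far weaker exponent $\tfrac{2(n-1)}{p+1}$, which does not prevent $e^{\frac{n-1}{p+1}r}u(r)\to0$. The improvement is produced entirely by the cross term $a\,u u'$, and it is available precisely because $p<1$ makes the admissible window $(\tfrac{n-1}{2},\tfrac{n-1}{p+1})$ for $a$ nonempty; this is the one place where sublinearity is genuinely exploited. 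The two delicate preliminaries that legitimize the functional argument are the amplitude decay $u\to0$ (needed to absorb the cross term via $u^2\le|u|^{p+1}$) and anchoring the comparison at a turning point $R$ (so that $\Phi(R)>0$).
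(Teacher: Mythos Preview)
Your argument is correct, with one step (the claim that $E_\infty=0$, i.e.\ $u\to0$) that is sketchy as written---``bounded periods'' is not justified---but this is exactly Lemma~\ref{zero}, so you may simply quote it. The route, however, differs from the paper's in both parts. For the oscillation/non-existence statement the paper integrates the inequality $-u'(r)u(r)^{-p}\ge \phi_n(r)/(\sinh r)^{n-1}$ over $(r_0,r)$ (with $r_0=0$ or a local extremum) and uses $0<p<1$ together with $\int^\infty \phi_n/(\sinh r)^{n-1}\,dr=\infty$ to get a contradiction in two lines; your Liouville--Sturm argument is equally valid and nicely explains the mechanism ($u^{p-1}\to+\infty$ forces oscillation), at the cost of first establishing $u\to0^+$. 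For \eqref{supinf} both proofs evaluate a Poho\v{z}aev-type functional at critical points, but the paper uses the functional $\Psi$ of Lemma~\ref{local}, whose derivative is the exact identity $\Psi'=\psi_p(r)\,u'^2$ with $\psi_p>0$ for \emph{all} $r>0$ when $p<1$ (Remark~\ref{psisub}); this gives $\Psi\ge\delta>0$ immediately, without any preliminary smallness of $u$ and without error terms to absorb. Your exponential-weight functional $\Phi$ is only asymptotically monotone and needs both $u\to0$ and the anchoring at a turning point; in effect your choice of weights is the large-$r$ approximation $\phi_n(r)\sim (n-1)^{-1}(\sinh r)^{n-1}\sim c\,e^{(n-1)r}$ of the paper's, and your window $\tfrac{n-1}{2}<a<\tfrac{n-1}{p+1}$ has the paper's implicit cross-term coefficient $\tfrac{n-1}{p+1}$ as its right endpoint. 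The paper's version is cleaner and reusable across all $p$; yours is more hands-on and makes explicit where $p<1$ enters (the window for $a$ is nonempty precisely then).
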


\noindent $\bullet$ {\sc Some comments on the linear case.} Note that if $p\not=1$ and $u$ solves the equation $\Delta u+\,|u|^{p-1}u=0$ on
$\mathbb H^n$, then $v=c^{1/(1-p)}u$ solves the equation $\Delta v+c\,|v|^{p-1}v=0$. But, of course, this simplifying trick does not apply when $p=1$.
For completeness and comparison with the nonlinear cases $p\neq1$, we recall here some facts about the linear case $p=1$.

It is well-known \cite{bea, D} that the L$^2$-spectrum of $-\Delta$ on $\mathbb H^n$ coincides with the half-line $[\Lambda,+\infty)$ where $\Lambda=(n-1)^2/4$. The equation
\beq\label{eigen}
\Delta u+c u=0\qquad\mbox{in }\mathbb H^n,
\eeq
has a radial positive solution (generalized ground state) with exponential decay for $c=\Lambda$. In Section \ref{final} we show that any such solution $u$ is comparable
to $(1+r)e^{-\frac{n-1}2r}$ when $r\to+\infty$ so that, in particular, $u\not\in{\rm L}^2$.\par
If $0<c<\Lambda$, then radial solutions to \eqref{eigen} with $u(0)>0$ are positive and slowly decaying at infinity, this behavior bears some similarity with
solutions corresponding to small initial data in the subcritical case, see Theorem \ref{polynomial}. On the other hand, when $c>\Lambda$, it belongs to the
L$^2$-spectrum. In this case, radial solutions $u$ to \eq{eigen} change sign infinitely many times and $|u(r)|\le C(1+r)e^{-\frac{n-1}2\,\,r}$. This behavior has now
to be compared with the sublinear case, see Theorem \ref{sublinear}. Further details are given in Section \ref{final}.


\section{\bf Further remarks and open problems}\label{final}

\noindent $\bullet$ {\bf Functional analysis on the hyperbolic space.} $\mathbb{H}^n$ is a
non-compact manifold and, since the Ricci curvature is constant and negative and the space is simply connected, {\it both the Sobolev and the Poincar\'e inequality hold}. In other words,
denoting by $\mu$ the Riemannian measure, we have both the inequalities
\beq\label{Sobeuc} \int_{{\mathbb H}^n} |u|^{2n/(n-2)}\,{\rm d}\mu\le C_1\int_{{\mathbb H}^n}|\nabla
u|^2\,{\rm d}\mu \eeq
and
\beq\label{poincare}
\int_{{\mathbb H}^n}u^2\,{\rm d}\mu\le C_2\int_{{\mathbb H}^n}|\nabla u|^2\,{\rm d}\mu \eeq
for all $u\in H^1({\mathbb H}^n)$, the usual Sobolev space. For generalizations and sharp form of such inequalities see \cite{mancini, bhakta}. These properties are important in the variational analysis. \par

\noindent$\bullet$ {\bf Some explicit ground states.} There are at least three explicit ground state solutions in the subcritical case $1<p<(n+2)/(n-2)$. They are

$$
U(r)=\frac{[n^2(n-1)]^{n-1}}{(1+\cosh r)^{n-1}}\qquad\mbox{for }p=\frac n{n-1}
$$
$$
U(r)=\frac{[n(n-1)]^{(n-1)/2}}{(\cosh r)^{n-1}}\qquad\mbox{for }p=\frac{n+1}{n-1}
$$
$$
U(r)=\left(\frac{n(n-1)}{n+1}\right)^{(n-1)/4}\frac1{\left((\cosh r)^2-\frac n{n+1}\right)^{(n-1)/2}}\qquad\mbox{for }p=\frac{n+3}{n-1}\,.
$$
They solve \eqref{triangle} and have the announced exponential decay. The first two solutions have been already found in
\cite{mancini}. We claim that these explicit solutions are natural candidates for the best constant in the Sobolev-type inequalities
$$\|u\|_{q}\le C\|\nabla u\|_2\,,\quad q=\frac{2n-1}{n-1}\,,\quad q=\frac{2n}{n-1}\,,\quad q=\frac{2n+2}{n-1}\,,
$$
see \cite{bhakta}. These inequalities can be obtained by interpolating between the L$^2$ gap inequality \eqref{poincare} and the Sobolev inequality \eqref{Sobeuc}.

\noindent $\bullet$
{\bf Asymptotic behavior in the linear case.} If $c=\Lambda$, then by \cite{bea, D} we know that \eqref{eigen} admits positive solutions $u$. Moreover, these
solutions satisfy the upper bound $u(r)\le C(1+r)e^{-\frac{n-1}2\,\,r}$, see e.g.\ \cite{APM}.
To show a similar lower bound we proceed using a strategy which inspires
the one in the proof of Theorem \ref{osc}. Any radial solution $u$ to \eqref{eigen} with $c=\Lambda$ and $u(0)>0$ satisfies the inequality
\[
0=u^{\prime\prime}(r)+(n-1)(\coth r)\, u^\prime(r)+\Lambda u(r)\le u^{\prime\prime}(r)+(n-1)u^\prime(r)+\Lambda u(r)
\]
as long as $u^\prime\le0$. In turn, this happens as long as $u\ge0$ since $[(\sinh r)^{n-1}u^\prime]^\prime=-\Lambda u$ and $u^\prime(0)=0$.
Therefore, as long as $u\ge0$, we have
\neweq{onemore}
\left[r^2\left(\frac{e^{\frac{n-1}2\,\,r}}ru(r)\right)^\prime\right]^\prime\ge0.
\endeq
Integrating this inequality on $[0,r]$ gives
\neweq{forzamilan}
\left(\frac{e^{\frac{n-1}2\,\,r}}{r}u(r)\right)^\prime\ge-\frac{u(0)}{r^2}.
\endeq
  Since the derivative of the function $r\mapsto e^{(n-1)r/2}u(r)$ is positive at $r=0$, we know that
$$\delta:=e^{(n-1)\varepsilon/2}u(\varepsilon)-u(0)>0$$
provided $\varepsilon>0$ is sufficiently small. Choose one such $\varepsilon$ and integrate \eqref{forzamilan} on $[\varepsilon,r]$ to get
\[
\frac{e^{\frac{n-1}2\,\,r}}ru(r)\ge \frac{e^{\frac{n-1}2\,\,\varepsilon}}\varepsilon u(\varepsilon)-u(0)\left(\frac1\varepsilon-\frac 1r\right)
\ge\frac{\delta}{\varepsilon}\qquad\forall r\ge\varepsilon.
\]
Hence, $u$ never vanishes and there exists $K>0$ such that $u(r)\ge K(1+r)e^{-\frac{n-1}2r}$ for all $r\ge0$.

If $0<c<\Lambda$, we claim that for all $\varepsilon>0$ and suitable constants $c_0,c_1(\varepsilon)>0$ we have
\begin{equation}\label{double.bounds}
 c_0\,e^{-\lambda_1r} \le u(r)\le c_1e^{-(\lambda_1-\varepsilon)r}.
\end{equation}
This means that radial solutions to \eqref{eigen} with $u(0)>0$ are positive and slowly decaying at infinity.
To prove this claim we set
\neweq{lambdac}
\lambda_1=\frac{n-1-\sqrt{(n-1)^2-4c}}{2}\, ,\qquad \lambda_2=\frac{n-1+\sqrt{(n-1)^2-4c}}{2}
\endeq
so that $\lambda_2>\lambda_1>0$. Then, by arguing as for \eq{onemore}, we see that in any interval $[0,R]$ on which $u$ is positive we get
\begin{equation}\label{lin.ge}
\left[e^{(\lambda_2-\lambda_1)r}\left(e^{\lambda_1r}u(r)\right)^\prime\right]^\prime\ge0.
\end{equation}
By integration we obtain $\left(e^{\lambda_1r}u(r)\right)^\prime\ge \lambda_1u(0)e^{(\lambda_1-\lambda_2)r}$. Integrating again yields:
\begin{equation}\label{hihi}
u(r)\ge \frac{u(0)}{\lambda_2-\lambda_1}\left[\lambda_2e^{-\lambda_1r}-\lambda_1e^{-\lambda_2r}\right].
\end{equation}
This first shows that $u$ never vanishes since the r.h.s.\ is always positive, and then that $u$ is lower bounded at infinity by a multiple of $e^{-\lambda_1r}$ with $\lambda_1$ given in \eq{lambdac}, so the lower bound \eqref{double.bounds} is proven.
As for the upper bound,  integrating \eqref{lin.ge} once and taking the conditions at $r=0$ into account gives, for all $r\ge0$,
\[
u^\prime(r)+\lambda_1u(r)\ge \lambda_1u(0)e^{-\lambda_2r}>0.
\]
By \eq{hihi} we know that $u$ is everywhere positive, hence we have that $\Theta(r)\ge -\lambda_1$ for all $r\ge0$,
where $\Theta(r):=u^\prime(r)/u(r)$. Recall also that $\Theta$ is negative. We now compute
\neweq{der}
\Theta^\prime(r)=\frac{u^{\prime\prime}(r)u(r)-u^\prime(r)^2}{u(r)^2}=\frac{u^{\prime\prime}(r)}{u(r)}-\Theta(r)^2=-(n-1)(\coth r)\Theta(r)-c-\Theta(r)^2.
\endeq
Assume first that $\Theta$ has infinitely many stationary points $r_m$, so that $\Theta^\prime(r_m)=0$ and $r_m\to+\infty$ as $m\to+\infty$. Hence, using \eqref{der}, we see that
\[
\Theta(r_m)\left[(n-1)(\coth r_m)+\Theta(r_m)\right]+c=0.
\]
This implies that, as $m\to+\infty$, $\Theta(r_m)$ tends either to $-\lambda_1$ or to $-\lambda_2$. Since $\Theta(r)\in[-\lambda_1,0]$, only the first possibility
can hold and $\Theta(r)\downarrow -\lambda_1$ as $r\to+\infty$. If instead $\Theta$ has finitely many or no stationary points, then $\Theta(r)$ has a limit as $r\to+\infty$ and \eq{der} shows that $\Theta^\prime(r)$ as well has a limit, necessarily zero (recall that $\Theta(r)\in[-\lambda_1,0]$). This corresponds to $(n-1)(\coth r)\Theta(r)+c+\Theta^2(r)\to0$ as $r\to+\infty$, which again implies that $\Theta(r)\downarrow -\lambda_1$ as $r\to+\infty$. Hence this latter fact holds true in any case. In particular for all $\varepsilon>0$ and all $r$ sufficiently large we have $u^\prime(r)/u(r)\le -(\lambda_1-\varepsilon)$, so that $u(r)\le c_1e^{-(\lambda_1-\varepsilon)r}$ for all $r\ge0$ and a suitable
$c_1(\varepsilon)>0$. This is precisely the upper bound in \eqref{double.bounds}.

Finally we discuss the case $c>\Lambda$, so that $c$ belongs to the L$^2$-spectrum. As a straightforward application of
\cite[Theorem 2.1 (b)]{DZ}, we see that solutions to \eq{eigen} change sign infinitely many times. It is also known \cite{APM} that any
eigenfunction $u$ satisfies $|u(r)|\le C(1+r)e^{-\frac{n-1}2\,\,r}$ where, by the above calculations, the r.h.s.\ describes the
asymptotic behavior of any eigenfunction corresponding to the eigenvalue $\Lambda$.\par\medskip

\noindent $\bullet$ {\bf The two-dimensional case.} Since when $n=2$ any exponent $p>1$ is subcritical, Theorems \ref{polynomial} and \ref{osc} hold for any $p>1$ when $n=2$. Moreover, when $n=2$, Theorem \ref{sublinear} holds with no modifications.\par

\medskip

\noindent{\bf $\bullet $ Numerics and open problems}. 1) Is it possible to get explicit bounds on $U(0)$, as given by Theorem \ref{polynomial}?

\noindent 2) In the subcritical case, numerical analysis seems to show that the number of zeros of a sign-changing solution increases as $u(0)$ increases, see
Figures 5-6. Several natural questions then arise. Is it true that the number of zeros of $u$ in nondecreasing as $u(0)$ increases? What is the asymptotic
behavior as $k\to\infty$ of the shooting levels $\alpha_k$ where the solution to \eqref{equazione}--\eqref{initcond} switches from $k$ to $k+1$
zeros? We conjecture that the solutions corresponding to $\alpha_k$ have finite energy. In this respect, \cite[Theorem 5.1]{bhakta} proves the
existence of infinitely many radial sign-changing finite energy solutions to \eqref{triangle}, whose energy is arbitrarily large.

\noindent 3) Numerics shows that in the supercritical case and for large dimensions and $p$ large the solutions are ordered and do not intersect. The corresponding result is well-known and quite interesting in the Euclidean setting, see \cite{JoLu}. This seems to require further investigation.


\section{\bf A Poho\v{z}aev-type functional}\label{pohofunctional}

We are here interested in studying the behavior of local solutions to \eqref{equazione}--\eqref{initcond}, namely solutions to the Cauchy problem
\neweq{cauchy}
\left\{\begin{array}{lll}
u''(r)+(n-1)(\coth r)\,u'(r)+|u(r)|^{p-1}u(r)=0\qquad (r>0)\\
u(0)=\alpha\,,\qquad u'(0)=0
\end{array}\right.
\endeq
for some $\alpha>0$. By arguing as in Proposition 1 in the Appendix of \cite{NS2}, one sees that \eqref{cauchy} has a $C^2$ local solution.
In fact, these solutions are global and vanish at infinity. This fact is known in the case $1<p<\frac{n+2}{n-2}$ from \cite[Theorem 4.1]{bhakta} in case of general solutions to \eqref{triangle}. We give here a simpler proof in the case of radial solutions which works for any $p>0$.

\begin{lemma}\label{zero}
Let $p>0$. For any $\alpha>0$ the local solution $u=u(r)$ to \eqref{cauchy} may be continued for all $r>0$ and $\lim_{r\to+\infty}u(r)=0$. Also in the non-Lipschitz case $p\in(0,1)$ each solution intersects the $r$-axis transversally.
\end{lemma}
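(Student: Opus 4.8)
The plan is to control every radial solution through the natural energy
\[
E(r):=\tfrac12\,u'(r)^2+\tfrac1{p+1}\,|u(r)|^{p+1}.
\]
Differentiating and substituting the equation in \eqref{cauchy} yields $E'(r)=-(n-1)(\coth r)\,u'(r)^2\le0$, so $E$ is non-increasing and $E(r)\le E(0)=\tfrac1{p+1}\alpha^{p+1}$ on the maximal interval of existence. This gives at once the uniform a priori bounds $|u(r)|\le\alpha$ and $|u'(r)|\le(2E(0))^{1/2}$. Since neither $u$ nor $u'$ can blow up in finite time, the local solution (whose existence follows as in \cite{NS2}) extends to all $r>0$, proving the first assertion.

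To obtain the decay I would integrate the identity for $E'$. Since $\coth r\ge1$, this gives $\int_0^\infty u'(s)^2\,\rd s\le (E(0)-E_\infty)/(n-1)<\infty$, where $E_\infty:=\lim_{r\to+\infty}E(r)\ge0$ exists by monotonicity. From the equation, $u''$ is bounded on $[1,\infty)$ (as $u,u',\coth r$ and $|u|^{p-1}u$ are all bounded there), so $u'^2$ is uniformly continuous and a standard Barbalat-type argument forces $u'(r)\to0$; consequently $|u(r)|^{p+1}=(p+1)\bigl(E(r)-\tfrac12 u'(r)^2\bigr)\to (p+1)E_\infty$, whence $|u(r)|\to L:=((p+1)E_\infty)^{1/(p+1)}$. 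The main obstacle is to exclude $L>0$. Assuming $L>0$, the function $u$ keeps a fixed sign for large $r$, say $u\to L>0$, so $|u|^{p-1}u\ge c_0>0$ for $r\ge R$. Writing the equation in the divergence form of \eqref{lapl} gives $[(\sinh r)^{n-1}u']'\le -c_0(\sinh r)^{n-1}$; integrating on $[R,r]$, dividing by $(\sinh r)^{n-1}$, and using
\[
\lim_{r\to+\infty}\frac{\int_R^r(\sinh s)^{n-1}\,\rd s}{(\sinh r)^{n-1}}=\frac1{n-1}
\]
(by L'Hôpital) yields $\limsup_{r\to+\infty}u'(r)\le -c_0/(n-1)<0$, contradicting $u'\to0$. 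Hence $L=0$ and $\lim_{r\to+\infty}u(r)=0$.

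For the transversality in the non-Lipschitz case $0<p<1$ I would again exploit $E$, now showing it stays strictly positive. On any interval $[r_1,r_0]\subset(0,\infty)$ the factor $\coth r$ is bounded by some $C_1$, so from $u'^2\le 2E$ one gets $|E'(r)|=(n-1)(\coth r)u'(r)^2\le 2(n-1)C_1\,E(r)$, and therefore $\bigl(e^{2(n-1)C_1 r}E(r)\bigr)'\ge0$. If a solution met the $r$-axis tangentially, i.e.\ $u(r_0)=u'(r_0)=0$ at some $r_0>0$, then $E(r_0)=0$, and this Gronwall inequality would force $E\equiv0$ on $(0,r_0]$; letting $r\to0^+$ contradicts $E(r)\to\tfrac1{p+1}\alpha^{p+1}>0$. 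Thus $E(r)>0$ for all $r\ge0$, so at every zero $r_0$ of $u$ we have $\tfrac12 u'(r_0)^2=E(r_0)>0$, that is $u'(r_0)\neq0$, and the intersection is transversal. (The same reasoning shows $u\not\equiv0$ and covers the Lipschitz range $p\ge1$ uniformly, where transversality also follows from uniqueness.)
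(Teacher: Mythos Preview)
Your proof is correct. The global-existence step is identical to the paper's (same Lyapunov functional, same conclusion that $u,u'$ stay bounded). The other two steps are handled differently.

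For the decay $u(r)\to0$, the paper argues by contradiction through the oscillation structure: assuming $u$ has no limit it picks the sequences of zeros $r_1^{(k)}$ and subsequent maxima $r_2^{(k)}$ and shows, via an integration-by-parts estimate, that the energy drop $F(r_1^{(k)})-F(r_2^{(k)})$ dominates $\tfrac{(n-1)^2}{2}u(r_2^{(k)})^2$, forcing the extrema to vanish. Your route is more modular: $\int u'^2<\infty$ plus boundedness of $u''$ gives $u'\to0$ via Barbalat, hence $|u|\to L$, and then the divergence form $[(\sinh r)^{n-1}u']'=-(\sinh r)^{n-1}|u|^{p-1}u$ kills $L>0$. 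This has the advantage of treating the ``nonzero limit'' case explicitly, which the paper's write-up leaves implicit.

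For transversality, the paper finds the global integrating factor $(\sinh r)^{2(n-1)}$ and checks directly that $\big[(\sinh r)^{2(n-1)}F(r)\big]'=(n-1)(\cosh r)(\sinh r)^{2n-3}\tfrac{2}{p+1}|u|^{p+1}\ge0$, with strict inequality near $r=0$, hence $F>0$ everywhere. Your Gronwall argument on compact subintervals of $(0,\infty)$ achieves the same conclusion; the paper's identity is a bit sharper (it works all the way down to $r=0$ without needing the limiting step), but both are valid.
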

\begin{proof} We introduce the Lyapunov functional
\neweq{F}
F(r):=\frac 12u^\prime(r)^2+\frac 1{p+1}|u(r)|^{p+1}
\endeq
and we show that $F$ is decreasing. Indeed, by \eqref{equazione} we get
\beq\label{Fprime}
F^\prime(r)=\Big[u^{\prime\prime}(r)+|u(r)|^{p-1}u(r)\Big]u^\prime(r)=-(n-1)(\coth r)u^\prime(r)^2\le0.
\eeq
This implies in particular that both $u$ and $u^\prime$ are bounded. A straightforward calculation using \eqref{Fprime} shows that
\[
\left[(\sinh r)^{2(n-1)}F(r)\right]^\prime\ge0
\]
with strict inequality holding at least for $r$ small. Hence $F(r)>0$ for all $r\ge0$ and in particular $u$ may intersect the $r$-axis only transversally.

Suppose that $u$ does not admit a limit as $r\to+\infty$.
Since $F$ is decreasing, $u$ cannot oscillate while having a constant sign. Then $u$ admits infinitely many negative minima and infinitely
many positive maxima. Let $r_1^{(k)}$ be the sequence of zeros of $u$ and $r_2^{(k)}$ be the sequence of the first maximum points of $u$
after $r_1^{(k)}$. We then have:
\beq\label{energia}\begin{aligned}
|F(r_2^{(k)})-F(r_1^{(k)})|&=\left|\int_{r_1^{(k)}}^{r_2^{(k)}}F^\prime(r)\,{\rm d}r\right|=(n-1)\int_{r_1^{(k)}}^{r_2^{(k)}}
(\coth r)\,u^\prime(r)^2\,{\rm d}r\\
&>(n-1)\int_{r_1^{(k)}}^{r_2^{(k)}}u^\prime(r)^2\,{\rm d}r=-(n-1)\int_{r_1^{(k)}}^{r_2^{(k)}}u(r)u^{\prime\prime}(r)\,{\rm d}r\\
&=(n-1)\int_{r_1^{(k)}}^{r_2^{(k)}}u(r)\left[(n-1)(\coth r)\,u^\prime(r)+u(r)^p\right]\,{\rm d}r\\
&> (n-1)^2\int_{r_1^{(k)}}^{r_2^{(k)}}u(r)\,u^\prime(r)\,{\rm d}r=\frac{(n-1)^2}2u(r_2^{(k)})^2,
\end{aligned}\eeq
where we used the positivity of $u$ and $u^{\prime}$ in $[r_1^{(k)},r_2^{(k)}]$ and the fact that $u\left(r_1^{(k)}\right)=u^\prime\left(r_2^{(k)}\right)=0$ for all $k$. As $F(r)$ is decreasing, it has a finite nonnegative limit.
Therefore, the l.h.s.\ of \eqref{energia} tends to zero as $k\to+\infty$. We conclude that $u\left(r_2^{(k)}\right)$ tends to zero as well when $k\to+\infty$. Therefore $u$ tends to zero on its maxima. Similar considerations hold for the minima, so that we can conclude that $u(r)\to0$ as $r\to+\infty$ as claimed.\end{proof}

We now introduce the function
\neweq{phin}
\phi_n(r)=\int_0^r(\sinh s)^{n-1}\, {\rm d}s.
\endeq
We point out that the integral involving $\phi_n$ can be explicitly computed in terms of elementary functions, but since its form appears
complicated, we leave it as in \eqref{phin}. For any $p>0$ we also define the function
\neweq{psip}
\psi_p(r)=\frac{p+3}{2(p+1)}\, (\sinh r)^{n-1}-(n-1)\phi_n(r)\coth r\qquad (r>0)
\endeq
which is linked to local solutions to \eqref{equazione} by means of the following statement.

\begin{lemma}\label{local}
Let $p>0$. For any local solution $u=u(r)$ to \eqref{cauchy} let
\neweq{PSI}
\Psi(r):=\phi_n(r)\left(\frac{u'(r)^2}{2}+\frac{|u(r)|^{p+1}}{p+1}\right)+(\sinh r)^{n-1}\frac{u(r)u'(r)}{p+1}.
\endeq
Then
\neweq{Psiproperties}
\Psi(0)=0\qquad\mbox{and}\qquad\Psi'(r)=u'(r)^2\psi_p(r).
\endeq
\end{lemma}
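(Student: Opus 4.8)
The plan is a direct computation in two short steps: evaluate $\Psi$ at the origin, and then differentiate and feed in the ODE \eqref{equazione} to produce the stated identity for $\Psi'$. The first claim, $\Psi(0)=0$, I would simply read off from \eqref{PSI}: by \eqref{phin} we have $\phi_n(0)=0$, so the whole first summand vanishes, while the second summand vanishes because $(\sinh 0)^{n-1}=0$ (and, redundantly, because $u'(0)=0$ by the initial condition). Hence every term is zero at $r=0$, independently of the value of $\alpha$.

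For the derivative I would abbreviate $g(r)=(\sinh r)^{n-1}$ and record the two elementary facts $g=\phi_n'$ and $g'=(n-1)(\sinh r)^{n-1}\coth r=(n-1)g\coth r$, together with $\frac{d}{dr}\frac{|u|^{p+1}}{p+1}=|u|^{p-1}u\,u'$, which holds for every $p>0$ since $|u|^{p+1}=(u^2)^{(p+1)/2}$ is $C^1$ in $u$. Differentiating \eqref{PSI} by the product rule then produces four groups of terms. The crucial algebraic input is the equation in its two equivalent forms $u''+|u|^{p-1}u=-(n-1)(\coth r)u'$ and $u\,u''=-(n-1)(\coth r)\,u\,u'-|u|^{p+1}$, the latter using $u^2=|u|^2$. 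Substituting the first into $\phi_n F'$ (where $F=\tfrac{u'^2}{2}+\tfrac{|u|^{p+1}}{p+1}$ is the functional of \eqref{F}) and the second into the $u\,u''$ part of $g\,(uu')'/(p+1)$, I expect two exact cancellations: the $u\,u'$ term generated by $g'\,uu'/(p+1)$ cancels the $u\,u'$ term coming from $u\,u''$, and the two copies of $g\,|u|^{p+1}/(p+1)$ (one from $gF$, one from $u\,u''$) cancel as well.

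After these cancellations the only surviving contributions are proportional to $u'^2$, and collecting them gives $\Psi'=u'^2\big[(\tfrac12+\tfrac{1}{p+1})g-(n-1)(\coth r)\phi_n\big]$; since $\tfrac12+\tfrac{1}{p+1}=\tfrac{p+3}{2(p+1)}$ this is precisely $u'^2\psi_p(r)$ with $\psi_p$ as defined in \eqref{psip}. The computation is routine, so the only real care needed is bookkeeping of the two cancellations above, and checking that the differentiation of $|u|^{p+1}$ is legitimate even where $u$ vanishes — which is fine for all $p>0$, and where for $p\in(0,1)$ Lemma \ref{zero} guarantees transversal zeros so no smoothness is lost.
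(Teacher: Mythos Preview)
Your proposal is correct and follows essentially the same route as the paper: a direct product-rule differentiation of \eqref{PSI} followed by two substitutions of the ODE \eqref{equazione}, producing the same pair of cancellations (the $uu'$ terms and the $|u|^{p+1}$ terms) and leaving only the $u'^2$ contributions that assemble into $\psi_p$. Your write-up is in fact slightly more careful than the paper's, since you explicitly justify $\Psi(0)=0$ and address the differentiability of $|u|^{p+1}$ at zeros of $u$; the paper simply states the former and tacitly assumes the latter.
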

\begin{proof} We use twice \eqref{equazione} to obtain
\[\begin{split}
\Psi^\prime(r)
&=(\sinh r)^{n-1}\left(\frac{u'(r)^2}{2}+\frac{|u(r)|^{p+1}}{p+1}\right)+\phi_n(r)\Big(u''(r)+|u(r)|^{p-1}u(r)\Big)u'(r)\\
&\,\,\,\,\, +\frac{n-1}{p+1}\, (\sinh r)^{n-2}(\cosh r)\ u(r)u'(r)+\frac{(\sinh r)^{n-1}}{p+1}\, u'(r)^2+
\frac{(\sinh r)^{n-1}}{p+1}\, u(r)u''(r)\\
&=(\sinh r)^{n-1}\left(\frac{u'(r)^2}{2}+\frac{|u(r)|^{p+1}}{p+1}\right)-(n-1)\phi_n(r)(\coth r)\, u'(r)^2\\
&\,\,\,\,\, +\frac{(\sinh r)^{n-1}}{p+1}\, u'(r)^2-\frac{(\sinh r)^{n-1}}{p+1}\, |u(r)|^{p+1}\\
&=\left(\frac{(\sinh r)^{n-1}}{2}-(n-1)\phi_n(r)(\coth r)+\frac{(\sinh r)^{n-1}}{p+1}\right)\, u'(r)^2.\end{split}\]
Recalling \eqref{psip}, this proves the statement.\end{proof}

In the supercritical case, the function $\Psi$ is negative:

\begin{lemma}\label{psipsi}
Assume $n\ge3$ and $p\ge\frac{n+2}{n-2}$. For any local solution $u=u(r)$ to \eqref{cauchy} the function
$\Psi$ defined in \eqref{PSI} satisfies $\Psi'(r)<0$ for all $r>0$. Hence, $\Psi(r)<0$ for all $r>0$.
\end{lemma}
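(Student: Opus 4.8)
The plan is to read off $\Psi'$ from Lemma~\ref{local} and reduce everything to a pointwise inequality for $\psi_p$. Indeed, \eqref{Psiproperties} gives $\Psi'(r)=u'(r)^2\,\psi_p(r)$, and since $u'(r)^2\ge 0$ the sign of $\Psi'$ is governed entirely by $\psi_p$. Thus the heart of the matter is to show that
\[
\psi_p(r)=\frac{p+3}{2(p+1)}\,(\sinh r)^{n-1}-(n-1)\,\phi_n(r)\coth r<0\qquad(r>0).
\]
I would first observe that the subtracted term does not depend on $p$, whereas the coefficient $\frac{p+3}{2(p+1)}$ is strictly decreasing in $p$ (its $p$-derivative equals $-1/(p+1)^2<0$). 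Hence it suffices to treat the borderline value $p=\frac{n+2}{n-2}$, where a direct computation gives $\frac{p+3}{2(p+1)}=\frac{n-1}{n}$; for every larger $p$ the coefficient is smaller and $\psi_p$ is even more negative.

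For the critical coefficient the inequality $\psi_p(r)<0$ becomes, after multiplying by $\sinh r/(n-1)>0$,
\[
\frac{1}{n}\,(\sinh r)^{n}<\cosh r\,\phi_n(r)\qquad(r>0).
\]
To prove this I would set $g(r):=\cosh r\,\phi_n(r)-\tfrac1n(\sinh r)^{n}$ and show $g>0$ on $(0,\infty)$. Clearly $g(0)=0$, and using $\phi_n'(r)=(\sinh r)^{n-1}$ the term from differentiating $\cosh r\,\phi_n(r)$ cancels against the one from $\tfrac1n(\sinh r)^n$, leaving
\[
g'(r)=\sinh r\,\phi_n(r)>0\qquad(r>0).
\]
Therefore $g(r)>0$ for all $r>0$, which yields $\psi_p(r)<0$ at the critical exponent and, by the monotonicity in $p$ noted above, for all $p\ge\frac{n+2}{n-2}$.

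It then remains to pass from $\psi_p<0$ to the two assertions. Since $\Psi'(r)=u'(r)^2\psi_p(r)\le 0$, the function $\Psi$ is nonincreasing, and as $\Psi(0)=0$ by \eqref{Psiproperties} this already gives $\Psi(r)\le 0$. To upgrade this to the strict inequality I would use that $u'$ does not vanish near the origin: from $[(\sinh r)^{n-1}u']'=-(\sinh r)^{n-1}|u|^{p-1}u$ together with $u(0)=\alpha>0$ one gets $u'(r)<0$ for small $r>0$, so $\Psi$ strictly decreases on an initial interval and $\Psi(r)<0$ for every $r>0$. The same computation shows $u'(r)<0$, and hence $\Psi'(r)<0$, for as long as $u$ stays positive. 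The only genuinely delicate point in the whole argument is the integral inequality of the second paragraph; it looks unwieldy because $\phi_n$ has no convenient closed form, but the crux is precisely the observation that differentiating the candidate $g$ produces an exact cancellation and collapses $g'$ to the manifestly positive quantity $\sinh r\,\phi_n(r)$. Everything else is routine bookkeeping.
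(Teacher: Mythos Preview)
Your argument is correct and takes a genuinely different route from the paper. The paper establishes $\psi_p(r)<0$ by differentiating: it computes
\[
\psi_p'(r)=\frac{n-1}{(\sinh r)^2}\Big[\phi_n(r)-\tfrac{p-1}{2(p+1)}(\sinh r)^n\cosh r\Big]=:\frac{n-1}{(\sinh r)^2}\,h(r),
\]
then differentiates once more and reads off that $h'(r)<0$ precisely when $p\ge\frac{n+2}{n-2}$; since $h(0)=0$ this gives $h<0$, hence $\psi_p'<0$, hence $\psi_p<0$. So the paper uses two differentiations and the critical exponent appears only at the end, in the sign of $h'$. You instead exploit the monotonicity of the coefficient $\frac{p+3}{2(p+1)}$ in $p$ to reduce to the critical value, where the coefficient becomes exactly $\frac{n-1}{n}$, and then a single differentiation of $g(r)=\cosh r\,\phi_n(r)-\tfrac1n(\sinh r)^n$ collapses to $g'(r)=\sinh r\,\phi_n(r)>0$. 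Your approach is a bit more economical (one derivative instead of two) and makes the role of the critical exponent more transparent: it is exactly the value at which the cancellation in $g'$ occurs. The paper's approach, on the other hand, keeps $p$ generic throughout and locates the threshold directly in the expression for $h'$, which is useful later (the same computation of $\psi_p'$ is reused in Lemma~\ref{subsub} and Remark~\ref{psisub} to handle the subcritical and sublinear ranges).

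One small remark: you obtain the strict inequality $\Psi'(r)<0$ only ``for as long as $u$ stays positive''. This is in fact all that is needed, and it is also all the paper actually proves (its proof simply asserts that $\psi_p<0$ suffices). Once $\Psi<0$ is established on $(0,\infty)$, positivity of $u$ follows a posteriori (a first zero $\rho$ would give $\Psi(\rho)=\phi_n(\rho)u'(\rho)^2/2>0$), so $u'<0$ everywhere and $\Psi'<0$ everywhere; you could close this loop in one sentence if you want the literal statement.
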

\begin{proof} In view of \eqref{Psiproperties}, the statement follows if we show that $\psi_p(r)<0$ for all $r>0$. In turn,
since $\lim_{r\to0}\psi_p(r)=0$, it suffices to prove that $\psi_p'(r)<0$ for all $r>0$.\par
Some computations show that
\neweq{hh}
\psi_p'(r)=\frac{n-1}{(\sinh r)^2}\ \left[\phi_n(r)-\frac{p-1}{2(p+1)}\, (\sinh r)^n\, \cosh r\right]=:
\frac{n-1}{(\sinh r)^2}\, h(r).
\endeq
We are so led to determine the sign of $h$. Further computations show that
\neweq{hprime}
h'(r)=\frac{3p+1}{2(p+1)}\, (\sinh r)^{n-1}\, \left[1-\left(1+\frac{(n-2)p-(n+2)}{3p+1}\right)(\cosh r)^2\right].
\endeq
In view of the assumption $p\ge\frac{n+2}{n-2}$, this shows that $h'(r)<0$ for $r>0$.
Since $h(0)=0$, this shows that $h(r)<0$ for $r>0$ so that $\psi_p'(r)<0$ for all $r>0$.\end{proof}

In the subcritical case, we obtain a different statement

\begin{lemma}\label{subsub}
Assume $n\ge3$ and $1<p<\frac{n+2}{n-2}$. For any local solution $u=u(r)$ to \eqref{cauchy} the function $\Psi$
defined in \eqref{PSI} admits a limit as $r\to+\infty$.
\end{lemma}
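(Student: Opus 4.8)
The plan is to exploit the identity $\Psi'(r)=u'(r)^2\psi_p(r)$ furnished by Lemma \ref{local}, and to show that, although in the subcritical range $\psi_p$ no longer keeps a fixed sign as in the supercritical Lemma \ref{psipsi}, it changes sign exactly once. Since $u'(r)^2\ge0$, the sign of $\Psi'$ is dictated entirely by $\psi_p$, so the whole argument reduces to a careful sign analysis of $\psi_p$ on $(0,\infty)$; once $\psi_p$ is shown to be positive on an initial interval and negative thereafter, $\Psi$ will be eventually monotone and hence will admit a limit in $[-\infty,+\infty)$.

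First I would recycle the computations \eqref{hh}--\eqref{hprime} already carried out in the supercritical case. Writing the bracket in \eqref{hprime} as $1-\beta(\cosh r)^2$ with $\beta:=\dfrac{(n+1)(p-1)}{3p+1}$, the subcritical condition $1<p<\frac{n+2}{n-2}$ is exactly equivalent to $0<\beta<1$, whereas $\beta\ge1$ had characterised the supercritical range. Hence $h'(r)>0$ while $(\cosh r)^2<1/\beta$ and $h'(r)<0$ afterwards, so that $h$, which vanishes at $r=0$, first increases and then decreases. I would then observe that as $r\to+\infty$ the term $\frac{p-1}{2(p+1)}(\sinh r)^n\cosh r$ in \eqref{hh} dominates $\phi_n(r)$, so $h(r)\to-\infty$; combined with the single sign change of $h'$, this forces $h$ to possess exactly one positive zero $r_0$, with $h>0$ on $(0,r_0)$ and $h<0$ on $(r_0,\infty)$. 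Since $\psi_p'=\frac{n-1}{(\sinh r)^2}\,h$ by \eqref{hh}, the function $\psi_p$ inherits the same shape: it increases on $(0,r_0)$ and decreases on $(r_0,\infty)$, starting from $\psi_p(0^+)=0$.

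The key step is then to fix the sign of $\psi_p$ at infinity. Using $\phi_n(r)=\frac{(\sinh r)^{n-1}}{n-1}\big(1+o(1)\big)$ together with $\coth r=1+o(1)$, the definition \eqref{psip} gives
\[
\psi_p(r)=\Big(\tfrac{p+3}{2(p+1)}-1\Big)(\sinh r)^{n-1}\big(1+o(1)\big)=\tfrac{1-p}{2(p+1)}\,(\sinh r)^{n-1}\big(1+o(1)\big),
\]
which tends to $-\infty$ because $p>1$. Together with the unimodal shape obtained above, this shows that there is a unique $r_\ast>0$ with $\psi_p>0$ on $(0,r_\ast)$ and $\psi_p<0$ on $(r_\ast,\infty)$. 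Consequently $\Psi'=u'^2\psi_p\ge0$ on $(0,r_\ast)$ and $\Psi'\le0$ on $(r_\ast,\infty)$, so $\Psi$ is nondecreasing up to $r_\ast$ and nonincreasing afterwards. A function that is ultimately monotone on a half-line always has a limit, whence $\lim_{r\to+\infty}\Psi(r)$ exists in $[-\infty,+\infty)$, which is the assertion.

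I expect the main obstacle to be precisely the asymptotic comparison in the displayed formula: both $(\sinh r)^{n-1}$ and $(n-1)\phi_n(r)\coth r$ grow at the same exponential rate $e^{(n-1)r}$, so the negativity of $\psi_p$ at infinity hinges on the exact leading coefficient $\frac{1-p}{2(p+1)}$ and cannot be read off from crude size estimates. To make this rigorous I would either expand $\sinh r$, $\coth r$ and $\phi_n(r)$ in powers of $e^{-2r}$, or control $g(r):=\frac{(\sinh r)^{n-1}}{n-1}-\phi_n(r)$ by integrating $g'(r)=(\sinh r)^{n-2}(\cosh r-\sinh r)=(\sinh r)^{n-2}e^{-r}=O\big(e^{(n-3)r}\big)$; this yields $\phi_n(r)=\frac{(\sinh r)^{n-1}}{n-1}+o\big((\sinh r)^{n-1}\big)$ and hence $(n-1)\phi_n(r)\coth r=(\sinh r)^{n-1}+o\big((\sinh r)^{n-1}\big)$, producing the claimed leading term of $\psi_p$. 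Note finally that the limit need not be finite: for the slowly decaying solutions of Theorem \ref{polynomial} the exponential factor $\psi_p\sim\frac{1-p}{2(p+1)}(\sinh r)^{n-1}$ against the polynomial size of $u'^2$ drives $\Psi\to-\infty$, which is why the statement asserts only the existence of a (possibly infinite) limit.
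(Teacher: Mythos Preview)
Your proof is correct and follows exactly the route taken in the paper: show that $\psi_p$ has a unique sign change (positive then negative) in the subcritical range, so that $\Psi'=u'^2\psi_p$ is eventually nonpositive and $\Psi$ is eventually monotone. The paper simply asserts the existence and uniqueness of the zero $R_{n,p}$ of $\psi_p$ without justification, whereas you have supplied the missing details---the unimodal shape of $h$ via \eqref{hprime} and the asymptotics $\psi_p(r)\sim\frac{1-p}{2(p+1)}(\sinh r)^{n-1}\to-\infty$---so your argument is in fact a more complete version of the same proof.
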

\begin{proof} Let $\psi_p$ be as in \eqref{psip}. Since $1<p<\frac{n+2}{n-2}$, one sees that
\neweq{Rnp}
\exists! R_{n,p}>0\qquad\mbox{such that}\quad\psi_p(R_{n,p})=0
\endeq
and $\psi_p(r)>0$ for $r<R_{n,p}$ whereas $\psi_p(r)<0$ for $r>R_{n,p}$. Then \eqref{Psiproperties} shows that $\Psi'(r)<0$ for all
$r>R_{n,p}$ so that $r\mapsto\Psi(r)$ is eventually decreasing and admits a limit as $r\to+\infty$.\end{proof}

\begin{remark}\label{psisub}
{\em In the sublinear case $0<p\le1$, \eqref{hh} yields $\psi'_p(r)>0$ for $r>0$. Hence, $\psi_p(r)>0$ and, by \eqref{Psiproperties},
one sees that $\Psi'(r)>0$ for all $r>0$. Finally, $\Psi(r)>0$ and $\Psi$ admits a positive limit (possibly $+\infty$)
as $r\to+\infty$.}\end{remark}

\section{\bf Supercritical case: proof of Theorem \ref{supercritico}}\label{proofsuper}
We treat the case $p\ge (n+2)/(n-2)$. It is clearly sufficient to deal with radial solutions satisfying $u(0)>0$.
Notice that \eqref{equazione} may be rewritten as
\neweq{alternative}
\frac{1}{(\sinh r)^{n-1}}\, \Big[(\sinh r)^{n-1}\, u'(r)\Big]'=-|u(r)|^{p-1}u(r)
\endeq
so that the map $r\mapsto(\sinh r)^{n-1}\, u'(r)$ is strictly decreasing as long as $u(r)$ remains positive.
Since its value at $r=0$ is 0, we infer that $u'(r)<0$ as long as $u(r)$ remains positive and two cases may occur:
\begin{enumerate}
\item There exists $\rho>0$ such that $u(r)>0$ for $r\in[0,\rho)$, $u(\rho)=0$, and $u'(\rho)<0$;
\item $u(r)>0$ for all $r\in[0,\infty)$.
\end{enumerate}

If $\rho>0$ as in case (1) exists, then $\Psi(\rho)>0$, where $\Psi$ in as \eqref{PSI}, contradicting Lemma \ref{psipsi}.
This rules out case (1) and shows that case (2) occurs.\par

This, together with Lemma \ref{zero}, shows that for any $\alpha>0$ the Cauchy problem \eqref{cauchy} admits a unique global positive
solution which vanishes at infinity. This proves the first assertion in Theorem \ref{supercritico}.

The rest of this section is devoted to the proof of the limit \eqref{ratesuper}. This requires several intermediate results.

\begin{lemma}\label{lowerbound} Assume that $p\ge\frac{n+2}{n-2}$ and let $u=u(r)$ be a solution to \eqref{cauchy}.
Then there exist $C_0>0$ and $r_0>0$ such that
$$u(r)>C_0\ e^{-\frac{n-1}{2}r}\qquad\mbox{for all }r>r_0.$$
\end{lemma}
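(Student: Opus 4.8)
The plan is to control the logarithmic derivative $\Theta(r):=u'(r)/u(r)$ and to prove that $\Theta(r)>-\frac{n-1}{2}$ for all large $r$; integrating this one inequality immediately produces the stated bound $u(r)>C_0\,e^{-\frac{n-1}{2}r}$. Throughout I use that $u>0$ and $u'<0$ on $(0,\infty)$ (established at the beginning of this section) and that $u(r)\to0$ by Lemma \ref{zero}, so that $u(r)^{p-1}\to0$ since $p>1$.

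The first step is to extract an a priori bound on $\Theta$ from the Poho\v{z}aev functional. By Lemma \ref{psipsi} we have $\Psi(r)<0$; dropping the nonnegative term $\phi_n(r)\frac{|u(r)|^{p+1}}{p+1}$ in \eqref{PSI} and using $u'<0$ gives $\phi_n(r)\frac{u'(r)^2}{2}<-(\sinh r)^{n-1}\frac{u(r)u'(r)}{p+1}$, whence, after dividing by $|u'(r)|>0$,
\[
-\frac{u'(r)}{u(r)}=\frac{|u'(r)|}{u(r)}<\frac{2(\sinh r)^{n-1}}{(p+1)\,\phi_n(r)}.
\]
Since $(\sinh r)^{n-1}/\phi_n(r)\to n-1$ as $r\to+\infty$ (L'H\^opital applied to \eqref{phin}), the right-hand side tends to $\frac{2(n-1)}{p+1}$, which is strictly less than $n-1$ because $p>1$. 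Hence there are $\beta\in\big(\frac{2(n-1)}{p+1},\,n-1\big)$ and $R_1>0$ with $\Theta(r)>-\beta$ for $r>R_1$; in particular $\Theta$ is eventually confined to $(-(n-1),0)$.

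The second step is a Riccati/barrier argument. Differentiating and using \eqref{equazione} as in \eqref{der} gives $\Theta'=-(n-1)(\coth r)\Theta-u^{p-1}-\Theta^2$, and since $\coth r>1$ and $\Theta<0$ this yields $\Theta'>-\Theta\big(\Theta+(n-1)\big)-u^{p-1}$. The quadratic $-\Theta(\Theta+(n-1))$ is positive on the compact interval $[-\beta,-\frac{n-1}{2}]\subset(-(n-1),0)$, with minimum $m=\beta(n-1-\beta)>0$; choosing $R_2\ge R_1$ so large that $u(r)^{p-1}<m/2$ for $r>R_2$, we get $\Theta'(r)>m/2>0$ whenever $\Theta(r)\in[-\beta,-\frac{n-1}{2}]$. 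In particular, at the level $\Theta=-\frac{n-1}{2}$ one has $\Theta'>0$, so $-\frac{n-1}{2}$ is a one-sided barrier that $\Theta$ cannot cross downward for $r>R_2$. If $\Theta(R_2)>-\frac{n-1}{2}$ the barrier keeps it above that value forever; otherwise $\Theta(R_2)\in[-\beta,-\frac{n-1}{2}]$ and $\Theta$ increases there at rate at least $m/2$, so it cannot stay bounded and must exit the interval through the top, after which the barrier again traps it above $-\frac{n-1}{2}$. Thus there is $r_0>0$ with $u'(r)/u(r)=\Theta(r)>-\frac{n-1}{2}$ for all $r>r_0$, and integrating on $[r_0,r]$ gives $u(r)>u(r_0)e^{(n-1)r_0/2}\,e^{-\frac{n-1}{2}r}$, i.e. the claim with $C_0=u(r_0)e^{(n-1)r_0/2}$.

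The main obstacle is exactly to rule out the scenario in which $\Theta$ stalls below $-\frac{n-1}{2}$ for all large $r$, which would mean $u$ decaying strictly faster than $e^{-\frac{n-1}{2}r}$. The autonomous part of the Riccati equation, $\Theta'\approx-\Theta(\Theta+(n-1))$, drives $\Theta$ upward only while $\Theta>-(n-1)$, the value $-(n-1)$ being repelling; this is why the a priori bound $\Theta>-\beta>-(n-1)$ furnished by the Poho\v{z}aev functional is essential rather than cosmetic. I note that the naive linear comparison, replacing $|u|^{p-1}u$ by $\frac{(n-1)^2}{4}u$ (legitimate for large $r$ since $u^{p-1}\to0$) and invoking convexity of $e^{\frac{n-1}{2}r}u$, breaks down at precisely this point, since convexity alone does not prevent $e^{\frac{n-1}{2}r}u$ from decreasing to $0$; the monotonicity of $\Theta$ provided by the trapping argument is what closes the gap.
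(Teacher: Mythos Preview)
Your argument is correct, but it differs substantially from the paper's proof. The paper exploits the full strength of Lemma~\ref{psipsi}: since $\Psi$ is strictly decreasing, one has $\Psi(r)<-\delta$ for all $r>r_0$ and some $\delta>0$. Dropping the nonnegative term $\phi_n(r)\big(\tfrac{u'^2}{2}+\tfrac{u^{p+1}}{p+1}\big)$ then gives $(\sinh r)^{n-1}\,u(r)u'(r)<-(p+1)\delta$, i.e.\ $u(r)u'(r)<-C\,e^{(1-n)r}$. Integrating over $(r,+\infty)$ (using $u(\infty)=0$) yields $u(r)^2>\tfrac{2C}{n-1}\,e^{(1-n)r}$, which is exactly the claimed bound. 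This is a two--line computation once the Poho\v{z}aev functional is in hand.

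By contrast, you use only the qualitative sign $\Psi<0$, extract from it the bound $\Theta(r)>-\beta$ with $\beta<n-1$ (this is essentially inequality~\eqref{second lower}, which in the paper is derived later for Lemma~\ref{theta}), and then run a Riccati barrier to upgrade this to $\Theta>-\tfrac{n-1}{2}$. Your route is longer and more delicate, but it has two merits: it gives pointwise control of the logarithmic derivative rather than merely of $u$, and it is a clean rehearsal of the machinery that the paper deploys afterwards in Lemma~\ref{theta}. One small presentational point: when $p>3$ one has $\tfrac{2(n-1)}{p+1}<\tfrac{n-1}{2}$, so $\beta$ can already be chosen below $\tfrac{n-1}{2}$, making the interval $[-\beta,-\tfrac{n-1}{2}]$ empty and Step~2 vacuous; it would be worth noting explicitly that Step~2 is only needed in the complementary range (which does occur, e.g.\ for $n\ge 5$ near the critical exponent).
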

\proof We use the function $\Psi$ defined in \eqref{PSI}. By Lemma \ref{psipsi} we see that there exists $\delta>0$ and $r_0>0$ such that
$$
\Psi(r)<-\delta\qquad\mbox{for all }r>r_0.
$$
Therefore,
$$
(\sinh r)^{n-1}\, \frac{u(r)u'(r)}{p+1}<-\delta\qquad\mbox{for all }r>r_0
$$
so that, for a suitable constant $C>0$,
$$
u(r)u'(r)<-C e^{(1-n)r}\qquad\mbox{for all }r>r_0\ .
$$
By integrating this inequality over $(r,+\infty)$ we get
$$
-\frac{u(r)^2}{2}<-\frac{C}{n-1} e^{(1-n)r}\qquad\mbox{for all }r>r_0
$$
and the stated lower bound follows.\endproof

\begin{lemma}\label{exp} There exist no strictly positive constants $C,\beta$ such that the bound $u(r)\le Ce^{-\beta r}$ holds for all $r\ge0$.
\end{lemma}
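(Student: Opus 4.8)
The plan is to argue by contradiction through a bootstrap that converts any exponential upper bound into a strictly faster one, until the conclusion collides with the lower bound of Lemma \ref{lowerbound}. Recall from the beginning of this section that the solution satisfies $u>0$ and $u'<0$ on $(0,\infty)$, that $u(r)\to0$ by Lemma \ref{zero}, and that Lemma \ref{lowerbound} provides $C_0,r_0>0$ with $u(r)>C_0e^{-\frac{n-1}{2}r}$ for $r>r_0$. Assume, for contradiction, that $u(r)\le Ce^{-\beta r}$ for all $r\ge0$ and some $C,\beta>0$.

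The engine is the quantity $J(r):=-(\sinh r)^{n-1}u'(r)>0$, which by \eqref{alternative} is increasing and satisfies $J'(r)=(\sinh r)^{n-1}|u(r)|^{p-1}u(r)=(\sinh r)^{n-1}u(r)^p$. I first establish an improvement step: if $u(r)\le Ae^{-br}$ for all large $r$ with $bp<n-1$, then $u(r)\le A'e^{-bp\,r}$ for all large $r$. Indeed, using $(\sinh r)^{n-1}\le e^{(n-1)r}$ one gets $J'(r)\le A^p e^{((n-1)-bp)r}$; integrating and invoking $bp<n-1$ yields $J(r)\le A_1 e^{((n-1)-bp)r}$ for large $r$. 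Dividing by $(\sinh r)^{n-1}$, which exceeds a fixed multiple of $e^{(n-1)r}$ for large $r$, gives $|u'(r)|=J(r)/(\sinh r)^{n-1}\le A_2 e^{-bp\,r}$; integrating this over $(r,\infty)$ and using $u(r)\to0$ produces the claimed bound with the improved exponent $bp$.

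Since $p>1$, the exponents $\beta,\beta p,\beta p^2,\dots$ increase to $+\infty$, so after finitely many applications of the improvement step we reach a bound $u(r)\le Be^{-b r}$ for large $r$ with $b:=\beta p^{k}$ satisfying $bp\ge n-1$ (stopping at the first such $k$). At this stage the source becomes integrable: from $J'(r)\le B^p e^{((n-1)-bp)r}$ with $(n-1)-bp\le0$ we obtain $J(r)\le B_1(1+r)$ (a finite limit if $bp>n-1$, at most linear growth if $bp=n-1$). Dividing again by $(\sinh r)^{n-1}$ and integrating over $(r,\infty)$ gives $u(r)\le B_2(1+r)e^{-(n-1)r}$ for all large $r$. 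Since $(1+r)e^{-(n-1)r}=o\big(e^{-\frac{n-1}{2}r}\big)$, this forces $u(r)<C_0e^{-\frac{n-1}{2}r}$ for $r$ large, contradicting Lemma \ref{lowerbound} and proving the lemma.

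The only genuine subtlety is that a single integration does not suffice when $\beta$ is small, since then $(\sinh r)^{n-1}u^p$ is not yet integrable and $J$ may still grow; one must iterate, and the decisive point is precisely that each step multiplies the decay exponent by $p>1$, so the procedure necessarily crosses the threshold $n-1$ after finitely many steps. Keeping track of the comparison between $(\sinh r)^{n-1}$ and $e^{(n-1)r}$, and handling the borderline case $bp=n-1$ separately via the factor $(1+r)$, is routine bookkeeping.
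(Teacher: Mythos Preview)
Your proof is correct and follows essentially the same bootstrap route as the paper: both use \eqref{alternative} to show that an exponential bound $u\le Ce^{-\beta r}$ self-improves to $u\le C'e^{-p\beta r}$ (as long as $p\beta<n-1$), iterate, and then contradict Lemma~\ref{lowerbound}. The only cosmetic difference is that the paper stops the iteration at $u(r)\le C_\varepsilon e^{-(n-1-\varepsilon)r}$, which already clashes with the lower bound $C_0e^{-\frac{n-1}{2}r}$, whereas you push one step further past the threshold to $u(r)\le B_2(1+r)e^{-(n-1)r}$; this extra step is unnecessary but harmless.
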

\proof In the sequel, $C$ will denote a positive constant which can change from line to line. Suppose by contradiction that,
for suitable $C,\beta>0$ the bound
\begin{equation}\label{upper}
u(r)\le Ce^{-\beta r}\ \end{equation}
is satisfied for all $r\ge0$. Using \eqref{alternative} we get
\[
\frac1{(\sinh r)^{n-1}}\left[(\sinh r)^{n-1}u^\prime(r)\right]^\prime=-u(r)^p\ge -Ce^{-p\beta r}
\]
and hence, using the inequality $\sinh r\le e^r/2$,
\begin{equation}\label{lower}
\left[(\sinh r)^{n-1}u^\prime(r)\right]^\prime\ge -Ce^{(n-1-p\beta)r}.
\end{equation}
In order to reach a contradiction one can assume that $\alpha$ is small, in particular that $\beta <(n-1)/p$.
Let us assume this condition and integrate \eqref{lower} between 0 and $r$. We get, for all $r>0$:
\[
(\sinh r)^{n-1}u^\prime(r)\ge C\left(1-e^{(n-1-p\beta)r}\right).
\]
Therefore, for $r$ sufficiently large, recalling that $n-1-p\beta>0$:
\[
u^\prime(r)\ge Ce^{-(n-1)r}\left(1-e^{(n-1-p\beta)r}\right)\ge -Ce^{-p\beta r}.
\]
The latter inequality can be integrated between $r$ and $+\infty$ so that, recalling that $u(r)\to 0$ as $r\to+\infty$, we have $-u(r)\ge-Ce^{-p\beta r}$, or:
\[
u(r)\le Ce^{-p\beta r},
\]
first for sufficiently large $r$ and then for all $r$ since $u$ is continuous. Therefore, compared with \eqref{upper}, we have proven a faster decay at infinity for $u$, since $p>1$. The procedure can be iterated to prove that $u(r)\le Ce^{-p^k\beta r}$ for all positive integers $k$ such that $p^{k-1}\beta<(n-1)/p$ and for all
$r\ge0$. Therefore we conclude that the bound $u(r)\le C_\varepsilon e^{-(n-1-\varepsilon)r}$ holds, given any positive $\varepsilon$, for a suitable constant $C_\varepsilon$ and for all positive $r$. This contradicts Lemma \ref{lowerbound}. \endproof

\begin{lemma}\label{theta}
The equality
\[
\lim_{r\to+\infty}\frac{u^\prime(r)}{u(r)}=0
\]
holds true.
\end{lemma}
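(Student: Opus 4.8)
The plan is to study the logarithmic derivative $\Theta(r):=u'(r)/u(r)$, which is well defined and strictly negative on $(0,\infty)$ because we already know $u>0$ and $u'<0$ everywhere, and to show directly that $\Theta(r)\to0$. Differentiating and using \eqref{equazione} in the form $u''=-(n-1)(\coth r)u'-u^p$ gives the Riccati-type equation
\[
\Theta'(r)=\frac{u''(r)}{u(r)}-\Theta(r)^2=-(n-1)(\coth r)\,\Theta(r)-\Theta(r)^2-u(r)^{p-1},
\]
valid for all $r>0$. Since $u\to0$ by Lemma \ref{zero} and $p>1$, the inhomogeneous term $u^{p-1}$ tends to $0$, while $\coth r\to1$; thus the asymptotic phase line of $\Theta$ is governed by $\Theta'\approx-\Theta(\Theta+(n-1))$, whose equilibria are $0$ (attracting from below) and $-(n-1)$ (repelling). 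This already makes $\Theta\to0$ plausible.

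First I would prove that $\limsup_{r\to\infty}\Theta(r)=0$. As $\Theta<0$ we certainly have $\limsup\Theta\le0$; if instead $\limsup\Theta=L<0$, then $\Theta(r)<L/2<0$ for $r$ large, and integrating $(\log u)'=\Theta$ yields $u(r)\le C\,e^{(L/2)r}$ for all $r\ge0$ (the bounded part on a compact interval is absorbed into $C$). This is an exponential upper bound with rate $-L/2>0$, contradicting Lemma \ref{exp}. Hence $\limsup_{r\to\infty}\Theta(r)=0$, so $\Theta$ returns arbitrarily close to $0$ along a sequence $r\to\infty$.

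The core of the argument is a barrier/invariant–region estimate giving $\liminf_{r\to\infty}\Theta(r)\ge0$. Fix $\eps\in(0,n-1)$ and evaluate the right–hand side of the Riccati equation on the line $\Theta=-\eps$:
\[
(n-1)(\coth r)\,\eps-\eps^2-u(r)^{p-1}\longrightarrow \eps(n-1-\eps)>0\qquad(r\to\infty),
\]
so there is $R_\eps$ with $\Theta'>0$ whenever $\Theta(r)=-\eps$ and $r\ge R_\eps$. Consequently the strip $\{\Theta>-\eps\}$ is forward invariant for $r\ge R_\eps$: at a first crossing $r_1\ge R_\eps$ of the level $-\eps$ from above one would have $\Theta'(r_1)\le0$, contradicting the displayed inequality. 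Since $\limsup\Theta=0$, I can pick $\tau\ge R_\eps$ with $\Theta(\tau)>-\eps$, and invariance then forces $\Theta(r)>-\eps$ for all $r\ge\tau$. Thus $\liminf_{r\to\infty}\Theta(r)\ge-\eps$, and letting $\eps\downarrow0$ gives $\liminf\Theta\ge0$. Together with $\limsup\Theta=0$ this yields $\lim_{r\to\infty}\Theta(r)=0$.

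The main obstacle is ruling out that $\Theta$ oscillates and is drawn toward the second equilibrium $-(n-1)$ (equivalently, toward super–exponential decay of $u$). A direct analysis of the stationary points of $\Theta$ shows only that they can accumulate at the two roots $0$ and $-(n-1)$ of $x^2+(n-1)x=0$, while a second–order test is inconclusive because the competing curvature terms are of the same vanishing order. The barrier argument above is exactly what circumvents this difficulty: it uses only the sign of the flow on a single level line together with $\limsup\Theta=0$ (itself a consequence of the no–exponential–decay Lemma \ref{exp}), and never requires controlling the detailed oscillatory behavior of $\Theta$.
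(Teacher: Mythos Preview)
Your argument is correct, and it takes a genuinely different route from the paper. The paper first exploits the Poho\v{z}aev functional (Lemma~\ref{psipsi}) together with \eqref{triviallimit} to obtain the a~priori lower bound \eqref{second lower}, i.e.\ $\Theta(r)>-\tfrac{2(n-1+\eps)}{p+1}$ for large $r$; then it analyzes the stationary points $r_m$ of $\Theta$, rewrites the equation at those points as \eqref{stationarypoints}, and uses the bound just obtained to control the denominator and conclude $\Theta(r_m)\to0$. You bypass the Poho\v{z}aev bound entirely: you write the Riccati equation for $\Theta$, observe that on the level $\Theta=-\eps$ the right-hand side tends to $\eps(n-1-\eps)>0$, and combine forward invariance of $\{\Theta>-\eps\}$ with the fact that $\limsup\Theta=0$ (both proofs share this last step, via Lemma~\ref{exp}). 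Your approach is more elementary and self-contained for this lemma; the paper's approach, on the other hand, packages the information as the differential inequality \eqref{second lower}, which it later reuses in the sign-changing subcritical case (proof of Theorem~\ref{osc}(iii)) under the abstract hypothesis $u'\le -\nu u$ with $\nu<n-1$.
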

\proof We first derive an auxiliary inequality. Let $\phi_n$ be defined as in \eqref{phin}. We notice that by de l'H\^{o}pital's rule
\neweq{triviallimit}
\lim_{r\to\infty}\ \frac{\phi_n(r)}{(\sinh r)^{n-1}}\ =\ \lim_{r\to\infty}\ \frac{(\sinh r)^{n-1}}
{(n-1)(\sinh r)^{n-2}(\cosh r)}\ =\ \frac{1}{n-1}.
\endeq
This proves that for any $\eps>0$ there exists $r_\eps>0$ such that
\neweq{reps}
\frac{\phi_n(r)}{(\sinh r)^{n-1}}>\frac{1}{n-1+\eps}\qquad\mbox{for all }r>r_\eps.
\endeq
{From} Lemma \ref{psipsi} we know that the function $\Psi$ defined in \eqref{PSI} is strictly negative, namely
$$
\frac{\phi_n(r)}{(\sinh r)^{n-1}}\, \left(\frac{u'(r)^2}{2}+\frac{u(r)^{p+1}}{p+1}\right)+
\frac{u(r)u'(r)}{p+1}<0\qquad\mbox{for all }r>0.
$$
In turn, by using \eqref{reps} this shows that
$$
\frac{1}{n-1+\eps}\, \frac{u'(r)^2}{2}\ +\ \frac{u(r)u'(r)}{p+1}<0\qquad\mbox{for all }r>r_\eps.
$$
Dividing by $u'(r)<0$ the latter inequality we obtain
\begin{equation}\label{second lower}u'(r)+\frac{2(n-1+\eps)}{p+1}\, u(r)>0\qquad\mbox{for all }r>r_\eps.\end{equation}

Set now $\Theta(r):=u^\prime(r)/u(r)$, and notice that $\Theta(0)=0$, $\Theta(r)<0$ for all $r>0$. We can rule out the possibility that
$$\limsup_{r\to+\infty}\Theta(r)<0$$
since this would imply a uniform exponential upper bound for $u$, against Lemma \ref{exp}. Therefore, either the statement is true or $\Theta$ has no limit. Should the latter possibility hold, $\Theta$ has infinitely many maxima and minima $r_m$, at which $\Theta^\prime$ vanishes, with $r_m\to+\infty$ as $m\to+\infty$. This means that, at each $r_m$, $u^{\prime\prime}u-(u^\prime)^2=0$. Multiplying equation \eqref{equazione} by $u$, we thus get, for all $m$,
\[
u^\prime(r_m)\left[(n-1)u(r_m)(\coth r_m)+u^\prime(r_m)\right]=-u(r_m)^{p+1}.
\]
Recalling that $u$ never vanishes shows that the quantity in the l.h.s.\ above never vanishes as well, so that we may rewrite the above equality as
\begin{equation}\label{stationarypoints}
u^\prime(r_m)=-\frac{u(r_m)^{p+1}}{(n-1)u(r_m)(\coth r_m)+u^\prime(r_m)}.
\end{equation}

Since $p>1$, using the auxiliary inequality \eqref{second lower}, we have that
\[\begin{aligned}
(n-1)u(r_m)(\coth r_m)+u^\prime(r_m)&>u(r_m)\left[(n-1)(\coth r_m)-2\frac{n-1+\eps}{p+1}\right]\\
&=u(r_m)(n-1)\left[(\coth r_m)-\frac 2{p+1}\, \frac{n-1+\eps}{n-1}\right]\\
&\ge \frac {u(r_m)}\alpha
\end{aligned}
\]
for a suitable positive constant $\alpha$, provided $0<\eps<(p-1)(n-1)/2$ and $m$ is sufficiently large, say $m\ge\overline{m}$. Putting this bound into
\eqref{stationarypoints} yields, for $m\ge \overline{m}$:
\[
0>u^\prime(r_m)>-\alpha u(r_m)^p
\]
which shows that, for the same set of indices,
\[
0>\frac{u^\prime(r_m)}{u(r_m)}>-\alpha u(r_m)^{p-1}.
\]
This and Lemma \ref{zero} imply that $u^\prime(r_m)/u(r_m)\to0$ as $m\to+\infty$. The definition of the sequence $\{r_m\}$ then shows that $u^\prime(r)/u(r)\to0$ as $r\to+\infty$ even if $\Theta$ has infinitely many stationary points. This concludes the proof.
\endproof
\begin{figure}[ht]
\centering
\includegraphics[height=6cm, width=14cm]{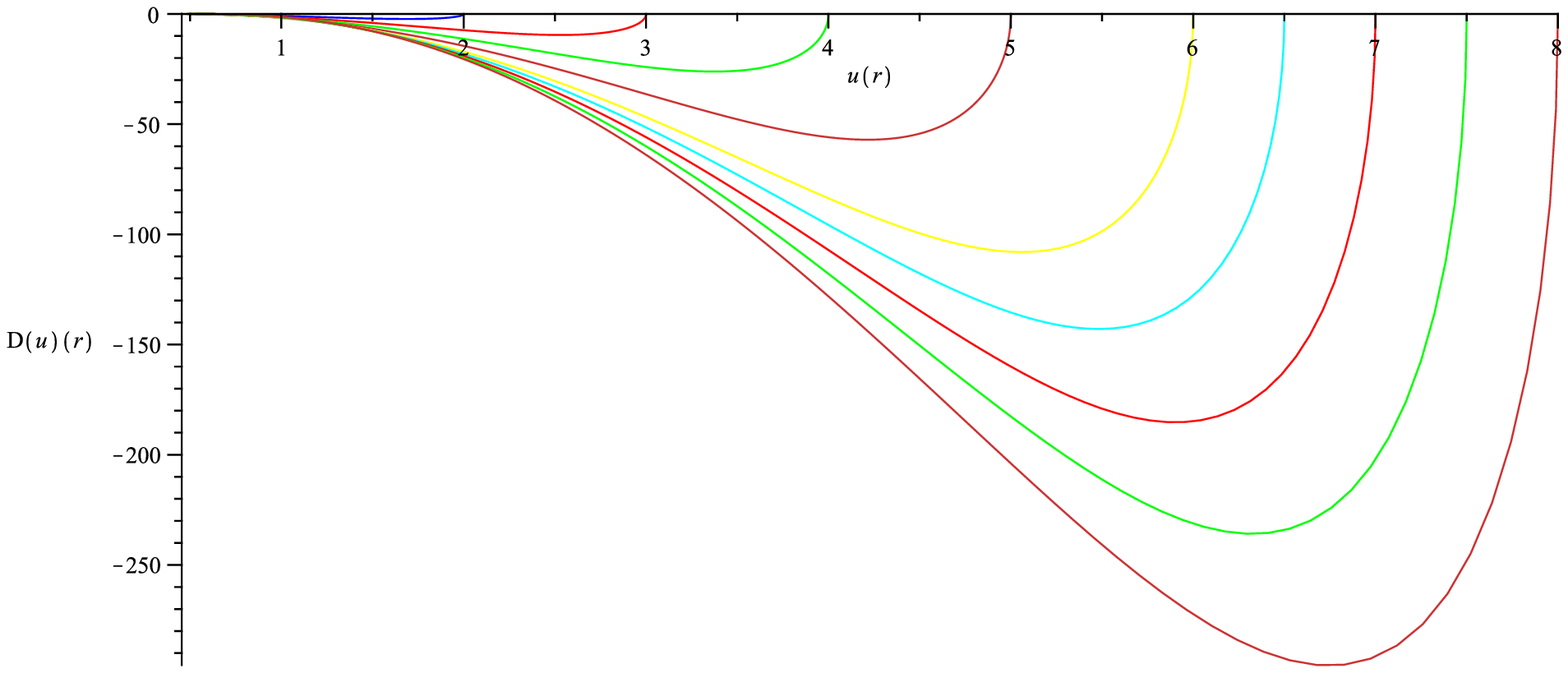}
\flushleft \color{darkred}\noindent\textit{Figure 1: phase plot of some solutions when $d=3$, $p=6$ (supercritical case).}
\end{figure}
We have now all the ingredients to prove \eqref{ratesuper}. By using an argument similar to the one in the proof of Lemma \ref{theta} we show that $\Theta_p(r):=u^\prime(r)/u(r)^p$ has a limit as $r\to+\infty$. This would clearly hold if $\Theta_p$ has finitely many maxima and minima (or none at all). If there is instead a sequence $\{r_m\}$ of extremals, with $r_m\to+\infty$ as $m\to+\infty$, we would have, by computing explicitly derivatives and recalling that $u$ never vanishes, $u^{\prime\prime}(r_m)u(r_m)-pu^\prime(r_m)^2=0$. Using \eqref{equazione} this would imply that
\[
u^\prime(r_m)=-\frac{u(r_m)^{p+1}}{(n-1)u(r_m)(\coth r_m)+pu^\prime(r_m)}.
\]
But we have proved in Lemma \ref{theta} that $u^\prime=o(u)$ as $r\to+\infty$, in particular along the sequence $\{r_m\}$. Therefore we have
$u^\prime(r_m)\sim-u(r_m)^p/(n-1)$ (where $f\sim g$ means that $f/g\to1$), and in particular $\Theta_p(r_m)\to-1/(n-1)$ as $m\to+\infty$. By the definition of the sequence $\{r_m\}$ this entails that $\Theta_p(r)\to-1/(n-1)$ as $r\to+\infty$. In any case $\Theta_p(r)$ has a limit as $r\to+\infty$.

Using again \eqref{equazione}, we may write
\[
\frac{u^{\prime\prime}(r)}{u^\prime(r)}+\frac{u(r)^p}{u^\prime(r)}\to 1-n\ \ {\rm as}\ r\to+\infty.
\]
Since $\frac{u^p(r)}{u^\prime(r)}$ has been shown to have a limit as $r\to+\infty$, $\frac{u^{\prime\prime}(r)}{u^\prime(r)}$ has a limit as well. But then de l'H\^{o}pital's rule and Lemma \ref{theta} imply that
\[
\lim_{r\to+\infty}\frac{u^{\prime\prime}(r)}{u^\prime(r)}=\lim_{r\to+\infty}\frac{u^\prime(r)}{u(r)}=0,
\]
as claimed in the statement. Thus we have proved that
\[
\lim_{r\to+\infty}\frac{u^\prime(r)}{u(r)^p}=\frac1{1-n}.
\]
\begin{figure}[ht]
\centering
\includegraphics[height=6cm, width=14cm]{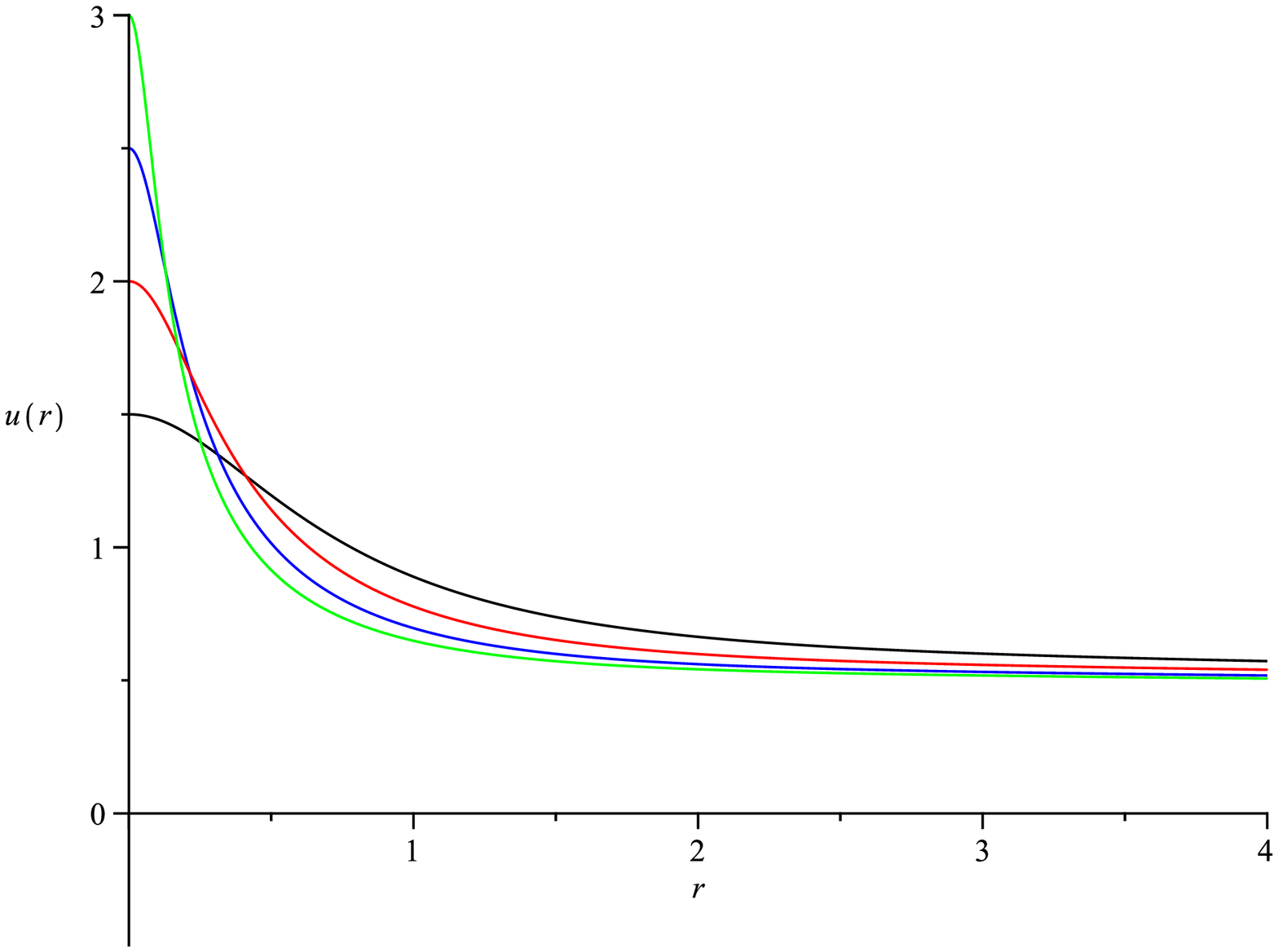}
\flushleft \color{darkred}\noindent\textit{Figure 2: plot of some solutions when $d=3$, $p=6$ (supercritical case).}
\end{figure}
Therefore, for all $\varepsilon>0$ there exists $r_\varepsilon$ such that, for all $r\ge r_\varepsilon$:
\[
\frac{p-1}{n-1}-\varepsilon\le \left(u^{1-p}\right)^\prime(r)\le\frac{p-1}{n-1}+\varepsilon.
\]
By integration between $r_\varepsilon$ and $r$ we then get
\[\begin{split}
\left[\left(\frac{p-1}{n-1}+\varepsilon\right)(r-r_\varepsilon)+u(r_{\varepsilon})^{1-p}\right]^{-1/(p-1)}&\le u(r)\\
&\le\left[\left(\frac{p-1}{n-1}-\varepsilon\right)(r-r_\varepsilon)+u(r_{\varepsilon})^{1-p}\right]^{-1/(p-1)}.
\end{split}
\]
Multiplying such inequalities by $r^{1/(p-1)}$ we obtain
\begin{eqnarray*}
\left(\frac{p-1}{n-1}+\varepsilon\right)^{-1/(p-1)} &\le& \liminf_{r\to+\infty}r^{1/(p-1)}u(r)\\
&\le& \limsup_{r\to+\infty}r^{1/(p-1)}u(r)\le \left(\frac{p-1}{n-1}-\varepsilon\right)^{-1/(p-1)}.
\end{eqnarray*}
Finally, since this holds for all positive $\varepsilon$, the proof of \eqref{ratesuper} is complete.\endproof

\section{\bf Subcritical case: proof of Theorem \ref{polynomial}}\label{proofsub}

Let $u$ be a local solution to \eqref{cauchy} satisfying \eqref{shooting}. By \cite[Corollary 4.6]{mancini}, there exists a unique $r_0>0$ such
that $u(r_0)=U(r_0)$. Therefore, $u$ is positive on $[0,+\infty)$. Moreover, we have

\begin{lemma}\label{upperbound} Assume that $p>1$, let $u$ be a solution to \eqref{cauchy} which is positive for all $r>0$ and assume that
there exist $C,\alpha>0$ such that $u(r)\le Ce^{-\alpha r}$ for all $r>0$. Then also the bound
\neweq{goodecay}
u(r)\le Ae^{-(n-1)r}
\endeq
holds for a suitable positive constant $A$ and for all $r>0$.
\end{lemma}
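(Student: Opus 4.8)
The plan is to run a bootstrap on the decay exponent, in the same spirit as the proof of Lemma \ref{exp}, and then to close with a single clean integration once the exponent is large enough to make a certain integral converge. Throughout, $C$ denotes a positive constant that may change from line to line, and I use $u>0$ together with $u(\infty)=0$ (Lemma \ref{zero}).

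First I would improve the hypothesis $u(r)\le Ce^{-\alpha r}$ step by step. Writing the equation in the form \eqref{alternative} and using $\sinh r\le e^r/2$, the bound $u\le Ce^{-\beta r}$ gives $[(\sinh r)^{n-1}u'(r)]'\ge -Ce^{(n-1-p\beta)r}$. As long as $\beta<(n-1)/p$, so that $n-1-p\beta>0$, integrating on $[0,r]$ yields $(\sinh r)^{n-1}u'(r)\ge C(1-e^{(n-1-p\beta)r})$, whence $u'(r)\ge -Ce^{-p\beta r}$ for $r$ large; a further integration on $[r,\infty)$ then upgrades the bound to $u\le Ce^{-p\beta r}$. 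This is exactly the decay improvement obtained in the proof of Lemma \ref{exp}. Since $p>1$, iterating this finitely many times starting from $\beta=\alpha$ (possibly zero times, if $\alpha$ is already large) produces a bound $u(r)\le Ce^{-\gamma r}$ with $p\gamma>n-1$.

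With such a $\gamma$ in hand, the key observation is that $(\sinh s)^{n-1}u(s)^p\le Ce^{(n-1-p\gamma)s}$ is integrable on $(0,\infty)$, so that $L:=\int_0^\infty(\sinh s)^{n-1}u(s)^p\,{\rm d}s<\infty$. Integrating \eqref{alternative} from $0$ to $r$ gives $(\sinh r)^{n-1}u'(r)=-\int_0^r(\sinh s)^{n-1}u(s)^p\,{\rm d}s\ge -L$, hence $u'(r)\ge -L(\sinh r)^{1-n}\ge -A'e^{-(n-1)r}$ for $r$ large. A final integration on $[r,\infty)$, once more using $u(\infty)=0$, yields $u(r)\le \frac{A'}{n-1}e^{-(n-1)r}$ for all large $r$; since $u$ is continuous and bounded, the bound \eqref{goodecay} then holds for all $r\ge0$ after enlarging the constant.

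The main obstacle is the borderline case in which the iteration lands exactly on $p\gamma=n-1$, so that the integral defining $L$ diverges logarithmically and the argument above breaks down. I would handle this by observing that in that case $(\sinh s)^{n-1}u(s)^p\le C$, so the same integration produces only $u(r)\le C(1+r)e^{-(n-1)r}$; but since $p>1$, this extra polynomial factor still keeps $(\sinh s)^{n-1}u(s)^p\le C(1+s)^p e^{(n-1)(1-p)s}$ integrable, so that $L$ becomes finite and one further pass through the last paragraph recovers the clean bound \eqref{goodecay}. All the remaining estimates are routine exponential bookkeeping.
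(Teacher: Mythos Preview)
Your proof is correct and follows essentially the same route as the paper: bootstrap the exponential decay rate via the iteration of Lemma \ref{exp}, then once the exponent $\gamma$ satisfies $p\gamma>n-1$ observe that $\int_0^\infty(\sinh s)^{n-1}u(s)^p\,{\rm d}s<\infty$, so that $(\sinh r)^{n-1}u'(r)\ge -L$ and one final integration on $[r,\infty)$ yields \eqref{goodecay}. The paper phrases the bootstrap as reaching $u\le C_\varepsilon e^{-(n-1-\varepsilon)r}$ and then picking $\varepsilon$ small so that $p(n-1-\varepsilon)>n-1$, which amounts to the same condition $p\gamma>n-1$; your explicit treatment of the borderline case $p\gamma=n-1$ (first obtaining $u\le C(1+r)e^{-(n-1)r}$ and then noting that this already makes the integral converge since $p>1$) is a nice extra bit of care that the paper leaves implicit.
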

\begin{proof} Proceeding as in the proof of Lemma \ref{exp}, we can improve the bound $u(r)\le Ce^{-\alpha r}$ by showing that
$u(r)\le C_\varepsilon e^{-(n-1-\varepsilon)r}$ for any $\varepsilon>0$. To arrive at the stated upper bound, we go back to the proof of that Lemma.
It has been shown there that the bound $u(r)\le Ce^{-\alpha r}$ implies that $\left[(\sinh r)^{n-1}u^\prime(r)\right]^\prime\ge -Ce^{(n-1-p\alpha)r}$,
with no restriction on $\alpha$ and $p$ needed up to that point. We can then take $\alpha$ sufficiently close to $n-1$ so that $n-1-p\alpha<0$.
Integrating the latter differential inequality between 0 and $r$ we get, say for all $r\ge1$, $(\sinh r)^{n-1}u^\prime(r)\ge -C$ for a suitable $C>0$.
We integrate again between $r$ and $+\infty$ so that, recalling that $\lim_{r\to+\infty}u(r)=0$, we have $-u(r)\ge-Ae^{-(n-1)r}$, or $u(r)\le Ae^{-(n-1)r}$.\end{proof}
\begin{figure}
\centering
\includegraphics[height=6cm, width=12cm]{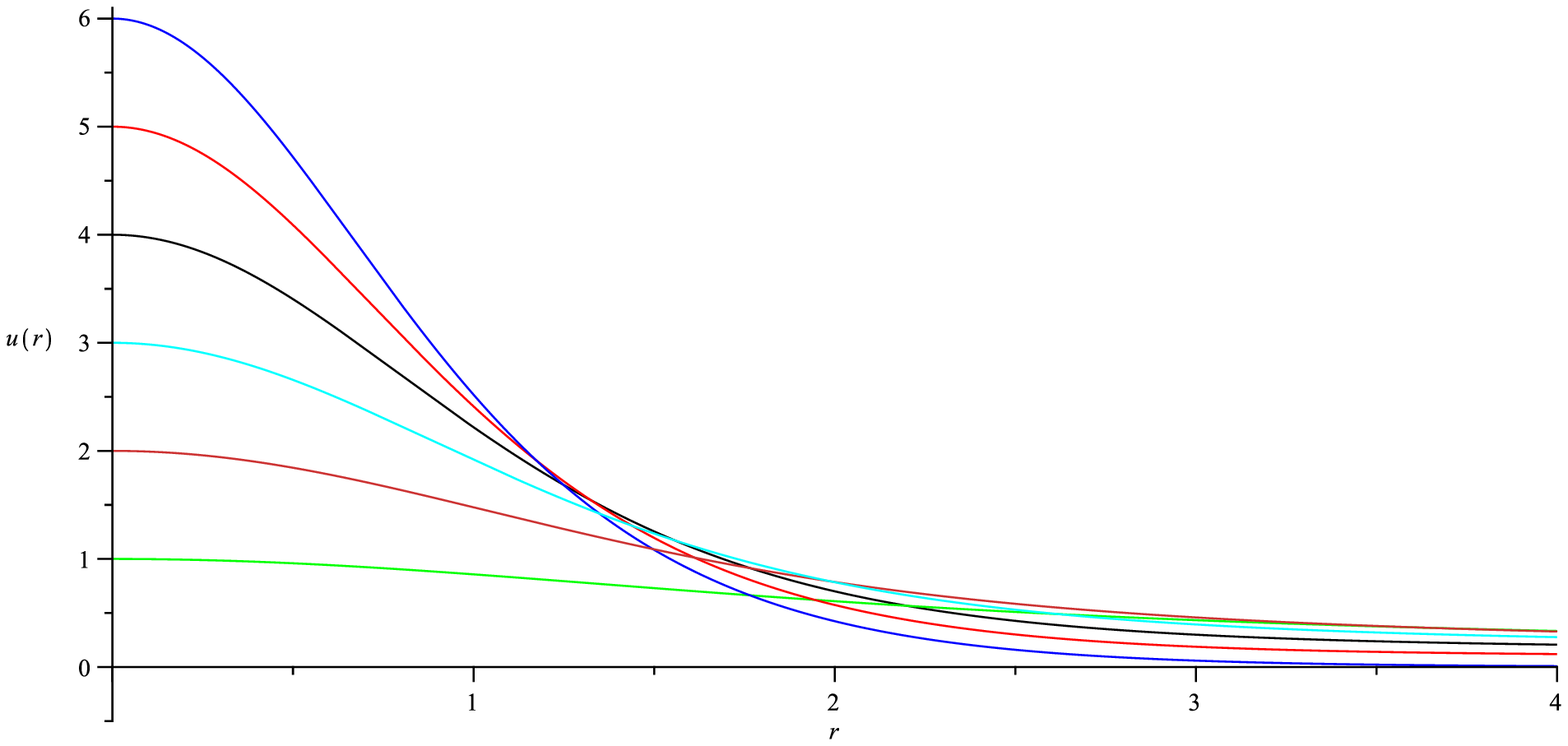}
\flushleft \color{darkred}\noindent\textit{Figure 3: plot of some positive solutions when $d=3$, $p=2$ (subcritical case). The special exponentially decaying solution $U$ corresponds to the blue line ($U(0)=6$)}
\end{figure}

By Theorem A we know that any $u$ as in the statement of Theorem \ref{polynomial} cannot satisfy \eqref{goodecay}. In fact we shall now prove that  \eqref{goodecay} improves to \eqref{n-1}. Notice indeed that from equation \eqref{equazione} we learn that $r\mapsto(\sinh r)^{n-1}U^\prime(r)$ is decreasing and admits a limit $\ell\in[-\infty,0)$. If $\ell=-\infty$,
de l'H\^{o}pital's rule shows that $U(r)e^{(n-1)r}\to+\infty$ as $r\to+\infty$, in contradiction with \eqref{goodecay}. Therefore, there exists $\gamma>0$ such that
\neweq{claimderivative}
\lim_{r\to+\infty} e^{(n-1)r}U^\prime(r)=-\gamma.
\endeq
Using again de l'H\^{o}pital's rule yields that
\neweq{exactbound}
\lim_{r\to+\infty} e^{(n-1)r}u(r)=\frac{\gamma}{n-1}
\endeq
as claimed. Hence, Lemma \ref{upperbound} shows that also
Lemma \ref{exp} holds.\par
According to Lemma \ref{subsub}, two cases may occur:
$$(i)\lim_{r\to+\infty}\Psi(r)<0\qquad(ii)\lim_{r\to+\infty}\Psi(r)\ge0.$$
If case $(i)$ occurs, then we obtain again \eqref{second lower} provided $r_\eps$ is sufficiently large.
Since Lemma \ref{exp} and \eqref{second lower} hold,
we can proceed exactly as in the supercritical case $p\ge\frac{n+2}{n-2}$, see all what follows \eqref{second lower}, and obtain \eqref{pol}.\par
If case $(ii)$ occurs, then $\Psi(r)>0$ for all $r>0$, that is
$$\phi_n(r)\left(\frac{u'(r)^2}{2}+\frac{u(r)^{p+1}}{p+1}\right)+(\sinh r)^{n-1}\frac{u(r)u'(r)}{p+1}>0\qquad\mbox{for all }r>0.$$
In turn, since $\phi_n(r)<(\sinh r)^{n-1}/(n-1)$, we obtain
$$u'(r)^2+\frac{2(n-1)}{p+1}\, u(r)u'(r)+\frac{2}{p+1}\, u(r)^{p+1}>0\qquad\mbox{for all }r>0.$$
We solve this as a second order inequality with respect to $u'(r)$. By Lemma \ref{zero} we see that the discriminant of this equation,
namely
$$\frac{(n-1)^2}{(p+1)^2}\, u(r)^2-\frac{2}{p+1}\, u(r)^{p+1}\, ,$$
is eventually positive, say for $r>r_0$ suitably large. By continuity, one of the following alternatives holds:
$$(a)\ u'(r)<-\frac{n-1}{p+1}\, u(r)-\left(\frac{(n-1)^2}{(p+1)^2}\, u(r)^2-\frac{2}{p+1}\, u(r)^{p+1}\right)^{1/2}\qquad\mbox{for all }r>r_0,$$
$$(b)\ u'(r)>-\frac{n-1}{p+1}\, u(r)+\left(\frac{(n-1)^2}{(p+1)^2}\, u(r)^2-\frac{2}{p+1}\, u(r)^{p+1}\right)^{1/2}\qquad\mbox{for all }r>r_0.$$
If case $(a)$ holds, then
$$\frac{u'(r)}{u(r)}<-\frac{n-1}{p+1}\qquad\mbox{for all }r>r_0.$$
By integration over $(r_0,r)$ we obtain
$$\log\frac{u(r)}{u(r_0)}\le-\frac{n-1}{p+1}\, (r-r_0)\qquad\mbox{for all }r>r_0$$
so that $u(r)\le ce^{-\frac{n-1}{p+1}r}$, in contradiction with Lemma \ref{exp}.\par
\begin{figure}
\centering
\includegraphics[height=6cm, width=12cm]{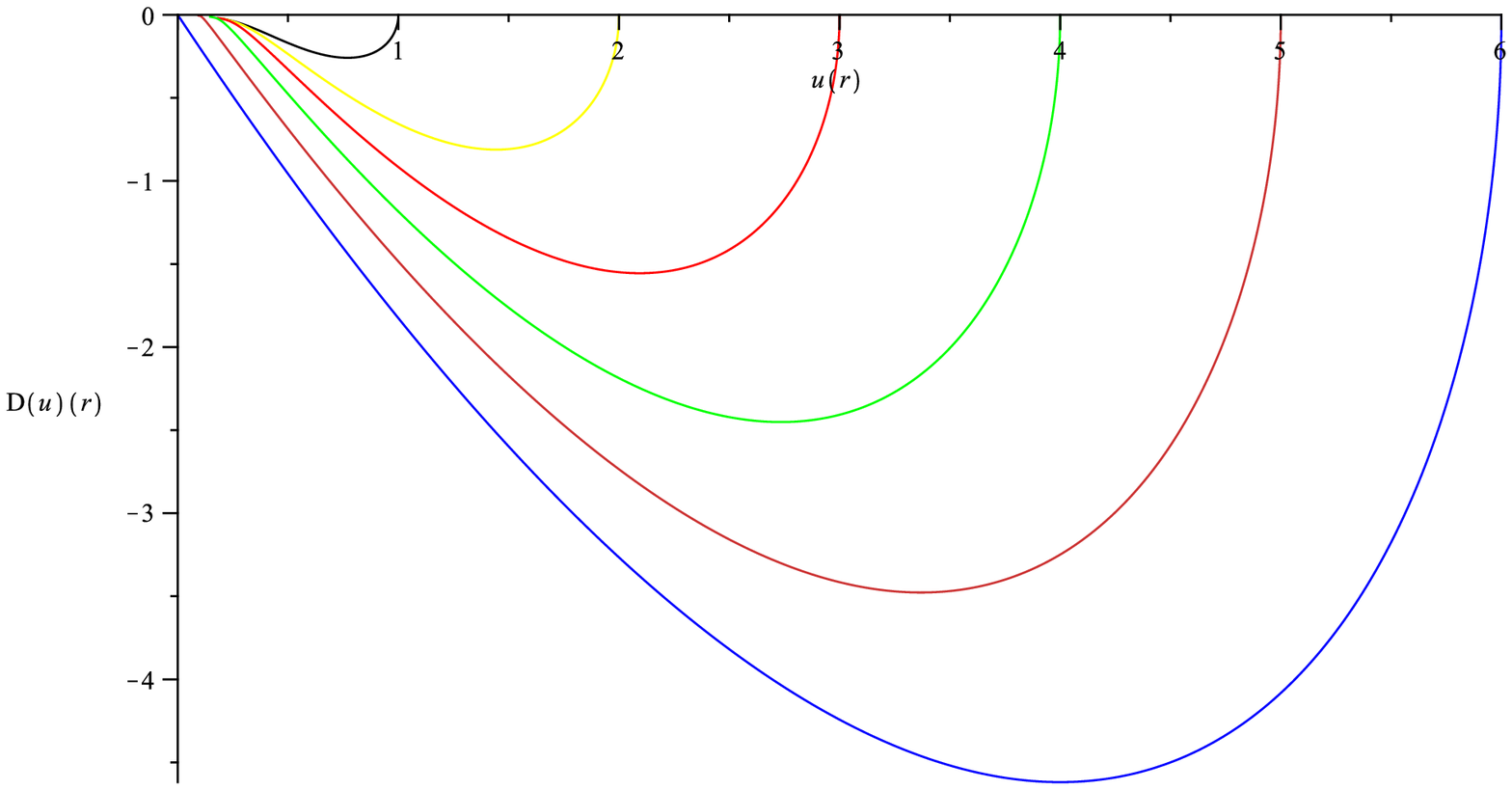}
\flushleft \color{darkred}\noindent\textit{Figure 4: phase plot of some positive solutions when $d=3$, $p=2$ (subcritical case). The special exponentially decaying solution $U$ corresponds to the blue line ($U(0)=6$)}
\end{figure}
Therefore, case $(b)$ holds, namely
$$u'(r)>\frac{n-1}{p+1}\, u(r)\, \left[-1+\left(1-\frac{2(p+1)}{(n-1)^2}\, u(r)^{p-1}\right)^{1/2}\right]\qquad\mbox{for all }r>r_0.$$
We may then exploit the inequality $\sqrt{1-\alpha}\ge1-\alpha$ valid for all $\alpha\in[0,1]$ to get
$$u'(r)>-\frac{2}{n-1}\, u(r)^p\qquad\mbox{for all }r>r_0$$

where $r_0\ge0$ is such that $u(r)<1$ for all $r>r_0$. By integrating this inequality over $(r_0,r)$ we obtain
$$u(r)>\frac{c}{r^{1/(p-1)}}\qquad\mbox{for all }r>r_1$$
for suitable $r_1>r_0$ and some constant $c>0$. The limit \eqref{pol} may now be obtained as in the supercritical case, see Lemma \ref{theta}
and the subsequent arguments.

\section{\bf Subcritical case: proof of Theorem \ref{osc}}\label{oscillation}

Consider  the set $P$ of initial values $u(0)>0$ corresponding to positive solutions. By continuous dependence, $P$ is closed as a subset of $(0,+\infty)$, hence   its complement $P^c$ in $(0,+\infty)$   is open. Moreover we have just proved that $(0,U(0)]\subseteq P$, see Theorem \ref{polynomial}. As a first main result of this section we want to prove equality of such two sets.

The set $P^c$ coincides with   the set  of $\alpha=u(0)$ such that the corresponding radial solution $u$ to \eqref{cauchy} changes sign.   This set is  contained in $(U(0),+\infty)$. Let us  prove that it contains all the large values of $\alpha$.

\begin{lemma}\label{rescale}
Let $u$ be the solution to   \eqref{cauchy} with $1< p< \frac{n+2}{n-2}$,
corresponding to an initial datum $u(0)=\alpha$ with $\alpha>
\alpha_0$ large. Then there exists a first point $r_0>0$ depending on $a$ where $u(r_0)=0$. Moreover, $r_0(\alpha)\to0$ as $\alpha\to+\infty$ and $u$ intersects $U$ exactly once in $[0,r_0]$.
\end{lemma}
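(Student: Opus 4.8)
The plan is to exploit the concentration of solutions with large initial datum near the pole, where the hyperbolic operator degenerates into the Euclidean one, and to transfer the classical Euclidean sign‑changing picture by rescaling. Set $\lambda=\alpha^{(p-1)/2}$, so that $\lambda^{q}=\alpha$ with $q=2/(p-1)$ as in \eqref{UC.intro}, and consider $v(s)=\lambda^{-q}u(s/\lambda)$. By the scaling \eqref{UC.intro} applied with $c=1/\lambda$, the function $v$ solves \eqref{equazione.c} with curvature parameter $c=1/\lambda$, that is
\[
v''(s)+(n-1)\,\frac{\coth(s/\lambda)}{\lambda}\,v'(s)+|v(s)|^{p-1}v(s)=0,\qquad v(0)=1,\ v'(0)=0,
\]
the normalization $v(0)=1$ being exactly what fixes $\lambda=\alpha^{(p-1)/2}$. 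Since $c\coth(cs)\to 1/s$ as $c\to0$ (the Euclidean limit already noted after \eqref{UC.intro}), the natural limit as $\alpha\to+\infty$ is the Euclidean Emden--Fowler Cauchy problem $v_\infty''+\frac{n-1}{s}v_\infty'+|v_\infty|^{p-1}v_\infty=0$ with $v_\infty(0)=1$, $v_\infty'(0)=0$.

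Next I would invoke the classical subcritical theory of the Euclidean equation (see \cite{NS1,NS2}; this is the Lane--Emden behavior for index below the critical one): while positive, $v_\infty$ is strictly decreasing, because $(s^{n-1}v_\infty')'=-s^{n-1}|v_\infty|^{p-1}v_\infty<0$ and $v_\infty'(0)=0$; and, since $1<p<\frac{n+2}{n-2}$, it reaches a finite first zero $s_0>0$ where the crossing is transversal, $v_\infty'(s_0)<0$. The core of the argument is then a continuous‑dependence statement: I would show $v=v_\lambda\to v_\infty$ in $C^1_{\mathrm{loc}}([0,\infty))$ as $\alpha\to+\infty$. Because the crossing at $s_0$ is transversal, this forces $v_\lambda$ to have a first zero $s_\lambda\to s_0$; undoing the scaling, $u$ has its first zero at $r_0=s_\lambda/\lambda$, and hence $r_0(\alpha)\to0$ since $\lambda=\alpha^{(p-1)/2}\to+\infty$.

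For the last assertion I would rescale $U$ as well, putting $\widetilde U_\lambda(s)=\lambda^{-q}U(s/\lambda)=\alpha^{-1}U(s/\lambda)$. As $0\le U\le U(0)$ and $U$ is smooth, both $\widetilde U_\lambda$ and $\widetilde U_\lambda'(s)=\alpha^{-1}\lambda^{-1}U'(s/\lambda)$ tend to $0$ uniformly on $[0,s_0+\delta]$. Intersections of $u$ and $U$ on $[0,r_0]$ correspond to zeros of $v_\lambda-\widetilde U_\lambda$ on $[0,s_\lambda]$. On $[0,s_0-\delta]$ the limit $v_\infty$ is bounded away from $0$ while $\widetilde U_\lambda\to0$, so $v_\lambda>\widetilde U_\lambda$ there for $\alpha$ large, excluding intersections; on $[s_0-\delta,s_0+\delta]$ one has, for $\alpha$ large, $(v_\lambda-\widetilde U_\lambda)'\to v_\infty'\le-c<0$, so $v_\lambda-\widetilde U_\lambda$ is strictly decreasing and, being positive at $s_0-\delta$ and negative past the first zero $s_\lambda$, changes sign exactly once, at a point where $v_\lambda=\widetilde U_\lambda>0$, hence inside $[0,s_\lambda]$. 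This yields exactly one intersection of $u$ with $U$ in $[0,r_0]$.

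The main obstacle is the continuous‑dependence step, since the coefficient $(n-1)\coth(s/\lambda)/\lambda$ is singular at $s=0$ (it blows up like $(n-1)/s$), so convergence $v_\lambda\to v_\infty$ cannot be obtained from standard ODE dependence on a neighborhood of the initial point. I would handle it by passing to the Volterra formulation underlying the local existence of \cite[Appendix]{NS2}: integrating the divergence form twice with $v_\lambda'(0)=0$ gives
\[
v_\lambda(s)=1-\int_0^s\big(\sinh(\sigma/\lambda)\big)^{1-n}\int_0^\sigma\big(\sinh(t/\lambda)\big)^{n-1}|v_\lambda|^{p-1}v_\lambda\,\mathrm{d}t\,\mathrm{d}\sigma,
\]
whose kernel ratio $\big(\sinh(t/\lambda)/\sinh(\sigma/\lambda)\big)^{n-1}$ converges to the Euclidean $\sigma^{1-n}t^{n-1}$. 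From this representation I expect uniform $C^1$ bounds and a uniform modulus of continuity on a fixed small interval $[0,\eta]$, hence convergence there by a Gr\"onwall/fixed‑point estimate; classical continuous dependence on $[\eta,s_0+\delta]$, away from the singularity, then propagates the convergence up to the transversal zero.
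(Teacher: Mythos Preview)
Your proposal is correct and follows essentially the same blow-up strategy as the paper: rescale by $\lambda=\alpha^{(p-1)/2}$ to normalize $v_\lambda(0)=1$, pass to the Euclidean Emden--Fowler limit as $\lambda\to\infty$, and read off the transversal first zero $s_\lambda\to s_0$, hence $r_0\sim s_0/\lambda\to0$. The paper disposes of the singular-dependence issue in one sentence (the singularity is the radial Euclidean one, with a regular branch depending smoothly on coefficients and data), whereas you spell out the Volterra-integral route from \cite[Appendix]{NS2}; and for the single intersection with $U$ the paper simply records that $r_0=O(1/\lambda)$ suffices, while you carry out the explicit comparison of $v_\lambda$ with the rescaled $\widetilde U_\lambda\to0$---both are the same idea, yours with more detail.
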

\begin{proof} We use a blow-up method. Let $u_\lambda$ be the solution of \eqref{cauchy} with $u_\lambda(0)=\alpha=\lambda^{2/(p-1)}$. Define
\neweq{changeof}
v_\lambda(\lambda r)=u_\lambda(r)\lambda^{-\frac{2}{p-1}}\,,\qquad\mbox{so that}\qquad
v_\lambda(0)=1\,.
\endeq
Setting $s=\lambda r$, we obtain that $v_\lambda$ satisfies the equation
\[
v''_\lambda(s)+\frac{n-1}{s}\,\left(\coth \frac s\lambda\right)\frac s\lambda\, v'_\lambda(s)+ |v_\lambda(s)|^{p-1}v_\lambda(s) =0\,.
\]
Let $S>0$ and take the limit $\lambda\to \infty$, so that for  $0\le s\le S$ we have $\coth\left(s/\lambda\right)(s/\lambda)\to 1$   uniformly   and the solution $v_\lambda$ tends to the solution $\overline{v}$ of the equation
\[
\overline{v}''(s)+\frac{n-1}{s}\overline{v}'(s)+ |\overline{v}(s)|^{p-1}\overline{v}(s) =0\,,\qquad\mbox{with}\qquad \overline{v}(0)=1\,,
\]
and the convergence of $v_\lambda\to \overline{v}$ holds in $C^1([0,S])$.   Although   the equation is singular at $s=0$, the singularity is associated to the radial   Euclidean   Laplacian and there is a regular branch of solutions such that $v'=0$ at $s=0$. Such solutions   depend smoothly on the coefficients and the data, and $0<S<\infty$ is still to be chosen.

Choose $S$ as bit larger than the first point $S_0$ for which $\overline{v}(S_0)=0$ (we also know that $\overline{v}'(S)<0$), hence, by the $C^1([0,S])$-convergence we can prove that also $v_\lambda(s)$ crosses transversally the $s$-axis, in a point $s_\lambda$ close to $S_0$. Going back to the original  variables, we have proved that $u_\lambda(r)$ crosses transversally the $r$-axis when the initial datum $u_\lambda(0)=\lambda^{2/(p-1)}$ is large, at a point $r_0$ which is approximatively $S_0/\lambda$. This also shows that the last claim holds true, indeed $r_0=O(1/\lambda)$. As long as crossing of the axis is transversal $r_0$ is a $C^1$ function of $a$. \end{proof}

\begin{lemma} $P^c$ is connected so that there exists $A\ge U(0)$ such that if $u$ is a radial solution to \eqref{triangle} satisfying $u(0)>A$ then it is sign-changing, whereas if $0<u(0)\le A$ then $u$ is positive.
\end{lemma}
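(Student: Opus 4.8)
The plan is to upgrade the soft information already collected ($P$ is closed, $(0,U(0)]\subseteq P$, $P^c\subseteq(U(0),+\infty)$, and $P^c\supseteq(\alpha_0,+\infty)$ by Lemma \ref{rescale}) into a genuine \emph{ordering} statement: the set $P^c$ is upward closed, i.e.\ if the solution with datum $\alpha_1$ changes sign then so does the solution with any datum $\alpha_2>\alpha_1$. Since $P^c$ is open, is contained in $(U(0),+\infty)$ and is nonempty, upward closedness forces $P^c=(A,+\infty)$ with $A:=\inf P^c\ge U(0)$, whence $P=(0,A]$ (note $A\in P$ because $P$ is closed) and the stated dichotomy. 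Equivalently, writing $r_\alpha\in(0,+\infty]$ for the first zero of the solution $u_\alpha$ to \eqref{cauchy} (finite exactly on $P^c$, and transversal by Lemma \ref{zero}), it suffices to prove that $\alpha\mapsto r_\alpha$ is strictly decreasing on $P^c$.

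First I would exploit the scaling used in \eqref{UC.intro} and in Lemma \ref{rescale}. Fix $\alpha_1\in P^c$ and $\lambda>1$, and set $\tilde u(r)=\lambda^{2/(p-1)}u_{\alpha_1}(\lambda r)$. A direct computation (identical to the one leading to \eqref{equazione.c}) shows that $\tilde u$ solves
\[
\tilde u''(r)+(n-1)\lambda\coth(\lambda r)\,\tilde u'(r)+|\tilde u(r)|^{p-1}\tilde u(r)=0,\qquad \tilde u(0)=\lambda^{2/(p-1)}\alpha_1,\quad \tilde u'(0)=0,
\]
and that $\tilde u$ has first zero $r_{\alpha_1}/\lambda$. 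Now $\tilde u$ and the genuine solution $u_{\alpha_2}$ with $\alpha_2=\lambda^{2/(p-1)}\alpha_1$ share the same initial data, but $\tilde u$ carries the \emph{larger} damping coefficient: indeed $\lambda\mapsto\lambda\coth(\lambda r)$ is increasing (its derivative equals $(\tfrac12\sinh(2t)-t)/\sinh^2 t$ at $t=\lambda r$, which is positive since $\sinh(2t)>2t$), so $(n-1)\lambda\coth(\lambda r)\ge(n-1)\coth r$ for $\lambda\ge1$. The heart of the matter is then an ODE comparison principle: for the Cauchy problem started at rest, a pointwise larger damping delays the first zero. Granting this, $r_{\alpha_2}\le r_{\alpha_1}/\lambda<r_{\alpha_1}$; letting $\lambda$ range over $(1,+\infty)$ we cover all $\alpha_2>\alpha_1$, which is exactly upward closedness (and strict monotonicity of $r_\alpha$).

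The main obstacle is precisely this comparison principle, which is \emph{not} a plain maximum principle. In the mechanical reading $u''=-b(r)u'-|u|^{p-1}u$ the reaction term is a destabilising, convex feedback that competes with friction, and, what is worse, the damping $b(r)=(n-1)\coth r$ is itself decreasing in $r$, so a solution descending more slowly reaches each height later and there feels \emph{less} friction. To bypass this I would pass to the phase plane: on $[0,r_\alpha)$ the solution is positive and strictly decreasing (rewrite the equation as in \eqref{alternative}), so with $P=-u'$ the first zero is the time map
\[
r_\alpha=\int_0^{\alpha}\frac{{\rm d}y}{P(y)},\qquad\text{where}\qquad \frac{{\rm d}}{{\rm d}y}\Big(\tfrac12 P^2\Big)=b\,P-y^{p},
\]
and the claim reduces to the speed comparison $\tilde P(y)\le P(y)$ at every common height $y$.

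This speed comparison I would establish by a continuation argument on $w=\tilde u-u_{\alpha_2}$: the ``damping gain'' $-\big[(n-1)\lambda\coth(\lambda r)-(n-1)\coth r\big]\tilde u'\ge0$ enters with the favourable sign and, via a Gronwall-type estimate, dominates the nonlinear feedback as long as $w\ge0$, so that $\tilde u$ cannot dip below $u_{\alpha_2}$ before $u_{\alpha_2}$ vanishes. As an independent check of the monotonicity of $r_\alpha$, one can instead use the implicit function theorem: transversality of the first zero makes $r_\alpha$ a $C^1$ function with $r_\alpha'=-\,\partial_\alpha u_\alpha(r_\alpha)/u_\alpha'(r_\alpha)$, so it is enough to control the sign of the linearised solution $\partial_\alpha u_\alpha$ at $r_\alpha$, which Sturmian oscillation theory governs because its potential $p\,u_\alpha^{p-1}$ exceeds the potential $u_\alpha^{p-1}$ of $u_\alpha$ itself by the factor $p>1$. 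Either route closes the argument, and I expect the careful handling of the competition between the convex nonlinearity and the $r$-dependent, decreasing damping to be the only delicate point.
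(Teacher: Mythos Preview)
Your approach is genuinely different from the paper's, and the paper's route is considerably shorter. The paper argues by contradiction: if $P^c$ had a bounded maximal component $(a,b)\subset(U(0),+\infty)$, then $a,b\in P$ and, by continuous dependence on the initial datum, the first zero $R(\alpha)$ must tend to $+\infty$ both as $\alpha\downarrow a$ and as $\alpha\uparrow b$. Since $R$ is continuous on $(a,b)$, this produces two distinct positive radial solutions of the Dirichlet problem on the same ball, contradicting the uniqueness result of Mancini--Sandeep \cite[Proposition 4.4]{mancini}. That is the whole proof.

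You aim instead for the stronger statement that $\alpha\mapsto r_\alpha$ is strictly decreasing on $P^c$ (indeed, this is Item (i) of Theorem \ref{osc}, which the paper also deduces from \cite[Proposition 4.4]{mancini}). Your scaling step is fine and the pointwise inequality $(n-1)\lambda\coth(\lambda r)\ge(n-1)\coth r$ is correctly justified. But the key step---``larger damping delays the first zero''---is exactly the statement you do not prove, and neither of your two sketches closes it. For the Gronwall route, the difference $w=\tilde u-u_{\alpha_2}$ satisfies an inequality of the form $w''+b_2w'+c(r)w\ge0$ with $c(r)>0$, and such an operator does \emph{not} enjoy a maximum principle (its homogeneous solutions oscillate), so $w(0)=w'(0)=0$ together with a nonnegative source does not force $w\ge0$. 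You yourself identify the mechanism that spoils a naive argument: in the phase plane the dampings must be compared at \emph{different} times $r_1(y)>r_2(y)$, and since $\coth$ is decreasing one cannot infer $b_1(r_1(y))\ge b_2(r_2(y))$ from the pointwise bound $b_1\ge b_2$. For the Sturm route, the comparison $p\,u_\alpha^{p-1}>u_\alpha^{p-1}$ tells you only that $\partial_\alpha u_\alpha$ has \emph{at least one} zero in $(0,r_\alpha)$; it does not by itself give $\partial_\alpha u_\alpha(r_\alpha)<0$, since a second zero before $r_\alpha$ would flip the sign back. Establishing that no such second zero occurs is essentially the content of the uniqueness theorem you are trying to avoid.

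In short: your strategy could in principle be made to work, but it amounts to re-proving \cite[Proposition 4.4]{mancini}, which is nontrivial; the paper simply quotes that result and is done in four lines. For the present lemma (mere connectedness of $P^c$), the paper's contradiction argument is both simpler and complete.
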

\begin{proof} Suppose by contradiction that $P^c$ contains a maximal connected interval $(a,b)$ with $b>a>U(0)$. Then the radial solutions corresponding to $u(0)=a$
or $u(0)=b$ are everywhere positive. Continuity with respect to the initial datum then implies that the first zero of solutions corresponding
both to initial data approaching $a$ from above and $b$ from below must tend to $+\infty$. This violates uniqueness of the solution to the Dirichlet problem on balls, as proved in \cite[Proposition 4.4]{mancini}. Therefore, $P^c$ is connected as claimed. \end{proof}

We want to prove  that $A=U(0)$. To prove this fact we shall need several intermediate results.

For any two positive solutions  $u_\al$ and $u_\beta$ defined in some interval $(0,R)$ and satisfying $u_\alpha(0)=\alpha$, $u_\beta(0)=\beta$ we study the sign of the difference $w= u_\al-u_\beta$, which satisfies the equation
$$
w''+(\coth r)w'+ b(r)w=0, \qquad b(r)=\frac{ u_\al(r)^p-u_\beta(r)^p}{ u_\al(r)-u_\beta(r)}=p\,\tilde u^{p-1}(r)
$$
where $\tilde u(r)$ is an intermediate value between $u_\alpha(r)$ and $u_\beta(r)$. We have $b(r)>0$.

\begin{lemma}\label{lemma1} Let $\alpha_1>\alpha_2\ge \alpha_3>\alpha_4>0$. Then the first intersection between $u_{\alpha_1}$ and $u_{\alpha_2}$ cannot take place after the first intersection between $u_{\alpha_3}$ and $u_{\alpha_4}$.
\end{lemma}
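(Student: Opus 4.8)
The plan is to reduce the four-solution statement to a cleaner three-solution comparison in which the two differences being compared share a \emph{common} solution, and then to settle that three-solution case by a Wronskian (Sturm) argument. First I would record the elementary facts about the difference $w=u_\alpha-u_\beta$ for $\alpha>\beta$: it solves the linear equation displayed just above the lemma, with $w(0)=\alpha-\beta>0$, $w'(0)=0$, and coefficient $b(r)=p\,\tilde u^{p-1}(r)>0$. Writing this in self-adjoint form, $[(\sinh r)^{n-1}w']'+(\sinh r)^{n-1}b(r)\,w=0$. Since $w$ solves a linear second-order ODE, at its first zero $T(\alpha,\beta)$ one must have $w'(T(\alpha,\beta))<0$ (a double zero would force $w\equiv0$ by uniqueness), so that first intersection is transversal and $u_\alpha>u_\beta$ on $(0,T(\alpha,\beta))$; here I write $T(\alpha,\beta)$ for that first intersection, set to $+\infty$ if it does not occur.

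The core step I would isolate is the monotonicity: if $\alpha>\beta>\gamma>0$ then $T(\alpha,\beta)\le T(\beta,\gamma)$, i.e.\ among two differences sharing the middle solution $u_\beta$, the upper pair crosses first. Set $w_1=u_\alpha-u_\beta$, $w_2=u_\beta-u_\gamma$ and $T_2=T(\beta,\gamma)$ (assumed finite, else nothing to prove). Arguing by contradiction, suppose $T(\alpha,\beta)>T_2$, so on $(0,T_2]$ the full ordering $u_\alpha>u_\beta>u_\gamma$ holds and $w_1,w_2>0$ there. The intermediate values then satisfy $\tilde u_{\alpha\beta}(r)\ge u_\beta(r)\ge \tilde u_{\beta\gamma}(r)$, and since $p>1$ and $t\mapsto t^{p-1}$ is increasing this gives the coefficient inequality $q_1(r):=(\sinh r)^{n-1}b_{\alpha\beta}(r)\ge (\sinh r)^{n-1}b_{\beta\gamma}(r)=:q_2(r)$ on $(0,T_2)$. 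Consider $W(r)=(\sinh r)^{n-1}\bigl(w_1'(r)w_2(r)-w_1(r)w_2'(r)\bigr)$; a direct computation from the two self-adjoint equations yields $W(0)=0$ and $W'(r)=(q_2-q_1)\,w_1w_2\le0$ on $(0,T_2)$, whence $W(T_2)\le0$. On the other hand $w_2(T_2)=0$ with $w_2'(T_2)<0$ by transversality, while $w_1(T_2)>0$, so $W(T_2)=-(\sinh T_2)^{n-1}w_1(T_2)w_2'(T_2)>0$, a contradiction. Hence $T(\alpha,\beta)\le T(\beta,\gamma)$.

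Finally I would chain this core step. When $\alpha_1>\alpha_2>\alpha_3>\alpha_4$, applying it to $(\alpha_1,\alpha_2,\alpha_3)$ and then to $(\alpha_2,\alpha_3,\alpha_4)$ gives $T(\alpha_1,\alpha_2)\le T(\alpha_2,\alpha_3)\le T(\alpha_3,\alpha_4)$, which is the assertion. The boundary case $\alpha_2=\alpha_3$ is immediate, since then $u_{\alpha_2}\equiv u_{\alpha_3}$, so $T(\alpha_1,\alpha_2)=T(\alpha_1,\alpha_3)$ and the core step applied to $(\alpha_1,\alpha_3,\alpha_4)$ already gives $T(\alpha_1,\alpha_3)\le T(\alpha_3,\alpha_4)$.

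The one point demanding care—and the reason for routing through the common solution $u_\beta$—is precisely the coefficient comparison. Comparing the outer pairs $(\alpha_1,\alpha_2)$ and $(\alpha_3,\alpha_4)$ directly would fail: once $u_{\alpha_2}$ and $u_{\alpha_3}$ have themselves crossed, one can no longer sandwich the two families of intermediate values $\tilde u_{\alpha_1\alpha_2}$ and $\tilde u_{\alpha_3\alpha_4}$ against each other, and the Sturm inequality $q_1\ge q_2$ breaks down. Sharing the middle solution is exactly what makes the sandwich $\tilde u_{\alpha\beta}\ge u_\beta\ge \tilde u_{\beta\gamma}$, and hence the whole Wronskian comparison, go through; this is where I expect the real work of the proof to sit.
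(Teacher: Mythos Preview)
Your argument is correct, and it is in the same Sturm-comparison spirit as the paper's, but the two proofs are organized differently and your version is actually a bit more careful. The paper compares the two outer differences directly: it sets $w_1=u_{\alpha_1}-u_{\alpha_2}$, $w_2=u_{\alpha_3}-u_{\alpha_4}$, asserts that the coefficient inequality $b_1>b_2$ holds on the interval where both $w_1,w_2>0$, and then runs the comparison via the quotient $z=w_1/w_2$ (an equation for $z$ forces $z'\le0$, so $w_2$ cannot vanish before $w_1$). Your Wronskian computation and the paper's quotient computation are equivalent reformulations of the same Sturm step.

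The genuine difference is exactly the point you flag at the end. The paper's assertion ``the assumptions imply $b_1>b_2$'' is immediate when $\alpha_2=\alpha_3$ (the only case the paper actually uses later, in the proof that $A=U(0)$), because then $\tilde u_{\alpha_1\alpha_2}>u_{\alpha_2}=u_{\alpha_3}>\tilde u_{\alpha_3\alpha_4}$ on the whole interval. When $\alpha_2>\alpha_3$ strictly, however, one would additionally need to know that $u_{\alpha_2}\ge u_{\alpha_3}$ throughout the comparison interval, and the paper does not justify this separately. Your reduction to the three-solution case with a shared middle solution, followed by the chain $T(\alpha_1,\alpha_2)\le T(\alpha_2,\alpha_3)\le T(\alpha_3,\alpha_4)$, sidesteps this issue cleanly and gives a self-contained proof of the general statement. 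So: same underlying mechanism, but your decomposition buys a complete argument for the full range $\alpha_2\ge\alpha_3$, whereas the paper's direct comparison is only transparently complete in the case $\alpha_2=\alpha_3$.
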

\begin{proof} Let $w_1=u_{\alpha_1}-u_{\alpha_2}$ and $w_2=u_{\alpha_3}-u_{\alpha_4}$.
We have $w_1(0), w_2(0)>0$ and $w'_1(0)=w'_2(0)=0$. As long as there is no zero of $w_1$ and $w_2$, say for $0\le r\le r_1$, they satisfy
$$
w_1''+(\coth r)\,w'_1+ b_1(r)\,w_1=0, \quad w_2''+(\coth r)\,w_2'+ b_2(r)\,w_2=0\,,
$$
and the assumptions imply that $b_1>b_2$ in $[0,r_1]$. A Sturm-type Theorem then implies that $w_1$ must vanish at least when $w_2$ vanishes for the first time, or before. In fact, the equation satisfied by the quotient $z=w_1/w_2$ is
$$
w_2\,z''+(2w_2'+(\coth r)w_2)\,z'=-z(b_1-b_2)w_2\le 0,
$$
and moreover $z(0)>0$ and $z'(0)=0$. Integrating once this inequality it follows that $z'(r)\le 0$,
hence $z\le z(0)$ for $0<r<r_1$, which implies that $z_2$ cannot vanish before $z_1$.\end{proof}

Given a continuous function $u = u(r)$ defined on a closed interval $I\subset \R$, the positive and negative sets of $u$ are defined as follows:
$$
\Omega_u^+ = \{ r\in I: u(r) > 0\}, \quad \Omega_u^- = \{ r \in I: u(r) < 0\}
$$
A component of $\Omega_u^+$ (or $\Omega_u^-$) is a maximal open connected subset.

We define the number of sign changes of $u$ in $I$ as the number (finite or infinite) of connected components of $\{r : u(r)\ne 0\}$
minus one. This is also known briefly as the {\it lap number} of $u$ in $I$,  and is denoted by $Z(u,I)$. Alternatively, $Z(u,I)$ is the supremum of the integer numbers $k$ such that  there exist $k + 1$ points from $I$ such that
$r_0 < r_1 < \dots < r_k$, satisfying
$$
u(r_j)\,\cdot \,u(r_{j+1}) < 0 \quad \forall j = 0,1,\dots k-1\,.
$$
We will apply the known facts of the theory of lap numbers (see \cite{G, V1}) to the difference of two solutions of the Emden-Fowler equation defined in an interval $I=[0,R]  \subset [0,\infty)$. One of the solutions in this application will be $U$, the other one $u_\alpha$ for some $\alpha>0$, $\alpha\ne U(0)$. We call
$$
v_\alpha(r)=u_\alpha(r)-U(r)
$$
Let $I_\alpha$ be the closure of the first connected component of $\Omega_{u_\alpha}^+$ and let
$Z_\alpha:= Z(v_\alpha, I_\alpha)$.

\begin{lemma} For $0<\alpha< U(0)$ and $\alpha>A$ we have $Z_\alpha=1.$
\end{lemma}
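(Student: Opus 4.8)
The plan is to read $Z_\alpha$ as the number of sign changes of $v_\alpha=u_\alpha-U$ on $I_\alpha$, and to establish that it equals $1$ by showing separately that $v_\alpha$ has \emph{at least} one and \emph{at most} one sign change there. A preliminary remark simplifies the bookkeeping: $v_\alpha$ solves a linear second order ODE with positive zeroth order coefficient $b(r)$, and $v_\alpha\not\equiv0$, so by uniqueness for the initial value problem no zero of $v_\alpha$ can be a double zero. Every zero of $v_\alpha$ is therefore a transversal crossing, and counting zeros is the same as counting sign changes. Thus $Z_\alpha$ equals the number of points of $I_\alpha$ at which $u_\alpha$ meets $U$, and it suffices to control that number.

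For $0<\alpha<U(0)$ the solution $u_\alpha$ is positive and global by Theorem \ref{polynomial}, so $I_\alpha=[0,+\infty)$. Here $v_\alpha(0)=\alpha-U(0)<0$, while the asymptotics show $v_\alpha>0$ for large $r$: indeed $u_\alpha$ decays only polynomially, see \eqref{pol}, whereas $U$ decays exponentially, see \eqref{n-1}, so $u_\alpha>U$ eventually. By the uniqueness of the intersection point already invoked in the proof of Theorem \ref{polynomial} (see \cite[Corollary 4.6]{mancini}), $u_\alpha$ meets $U$ at exactly one point. A single transversal crossing joining a negative value at $0$ to positive values at infinity forces $Z_\alpha=1$.

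For $\alpha>A$ the solution $u_\alpha$ changes sign, and $I_\alpha=[0,r_\alpha]$ with $r_\alpha$ its first zero. Now $v_\alpha(0)=\alpha-U(0)>0$ because $\alpha>A\ge U(0)$, while $v_\alpha(r_\alpha)=-U(r_\alpha)<0$ since $U$ is everywhere positive; hence $Z_\alpha\ge1$. For the upper bound I would first record a monotonicity: writing the equation in the form $[(\sinh r)^{n-1}v_\alpha']'=-b(r)(\sinh r)^{n-1}v_\alpha$ and using $v_\alpha'(0)=0$, one sees that as long as $v_\alpha\ge0$ the bracket is nonincreasing, hence nonpositive, so $v_\alpha$ is strictly decreasing up to its first zero $r_1$, which is reached transversally. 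It then remains to prove that $v_\alpha$ does not return to zero on $(r_1,r_\alpha]$, that is, that $u_\alpha$ does not meet $U$ a second time inside its first positive hump.

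This last exclusion is the main obstacle. It cannot be settled by the Wronskian $\mathcal W=(\sinh r)^{n-1}(u_\alpha'U-u_\alpha U')$ alone: a direct computation gives $\mathcal W'=(\sinh r)^{n-1}u_\alpha U\,(U^{p-1}-u_\alpha^{p-1})$, whose sign is opposite to that of $v_\alpha$, and this is perfectly compatible with a second crossing. I would therefore obtain uniqueness of the intersection with the ground state from the comparison machinery of this section rather than from an elementary identity. Concretely, for $\alpha$ large Lemma \ref{rescale} already yields exactly one intersection of $u_\alpha$ with $U$ before the first zero, hence $Z_\alpha=1$; I would then propagate this to all $\alpha>A$ using the theory of lap numbers \cite{G,V1} together with the Sturmian comparison of Lemma \ref{lemma1}. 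The point is that, as $\alpha$ decreases towards $A$, an extra sign change of $v_\alpha$ on $I_\alpha$ could only be created either through a tangency, ruled out by the transversality noted above, or by a crossing entering through the moving endpoint $r_\alpha$; the latter is controlled by the monotonicity of the first intersection in Lemma \ref{lemma1} and by the monotone dependence of $r_\alpha$ on $\alpha$ from Theorem \ref{osc}(i). Showing rigorously that no new interior crossing appears is the delicate step, and it is precisely where the lap number estimates are indispensable.
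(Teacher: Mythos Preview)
Your overall strategy coincides with the paper's: for $\alpha<U(0)$ you use the unique intersection result from \cite[Corollary 4.6]{mancini}, and for $\alpha>A$ you start from large $\alpha$ via Lemma~\ref{rescale} and then propagate downwards by a lap-number continuity argument based on the impossibility of a tangential contact. That is exactly what the paper does.

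Two points, however, need correction. First, your invocation of Theorem~\ref{osc}(i) is circular: the monotonicity of $\alpha\mapsto r_\alpha$ is established in the paper \emph{after} the present lemma and relies on it (through the identification $A=U(0)$). Second, you bring in Lemma~\ref{lemma1} to control ``a crossing entering through the moving endpoint $r_\alpha$'', but this machinery is unnecessary and the paper does not use it here. The endpoint is controlled by a much simpler observation: $v_\alpha(0)=\alpha-U(0)>0$ and $v_\alpha(r_\alpha)=-U(r_\alpha)<0$ are \emph{strictly} nonzero, so no zero of $v_\alpha$ can appear or disappear through either end as $\alpha$ varies continuously. Combined with the continuity of $r_\alpha$ in $\alpha$ (which follows from the transversality $u_\alpha'(r_\alpha)<0$, independently of any monotonicity), this settles the boundary issue immediately.

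With the boundary thus handled, the paper's remaining argument is exactly the interior-tangency step you identified: $Z_\alpha\ge1$ and $Z_\alpha$ is always odd (since the signs at the two ends are fixed and opposite), so a jump in $Z_\alpha$ as $\alpha$ decreases would mean the simultaneous birth of a positive and a negative component; at the supremum of the $\alpha$'s with $Z_\alpha>1$ this forces a double zero of $v_\alpha$, contradicting the transversality you already noted. That is the whole proof for $\alpha>A$; no Sturmian comparison is needed. Your last paragraph correctly locates the delicate step but stops short of actually carrying it out---the paper's version shows it is less delicate than you suggest once the endpoint worry is dismissed.
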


\begin{proof}(i) For $\alpha<U(0)$ we know that $I_\alpha=[0,\infty)$ and the set where $v_\alpha\ne 0$ has two connected components, a negative one near $r=0$ and a positive one for large $r$, separated by a unique zero $r_\alpha$. Hence, $Z_\alpha =1$ for all $\alpha\in (0,U(0))$.

\noindent (ii) Let us now consider $\alpha>A$. We have proved in Lemma \ref{rescale} that for all $\alpha\ge\alpha_0$ large enough, $u_\alpha$ crosses $U$
transversally at some small $r_\alpha$ and after that  $v_\alpha$ is negative until $u_\alpha$ becomes zero at some $R_\alpha>0$, also small. This means that
$$
Z_\alpha:= Z(v_\alpha, [0,R_\alpha])=1 \quad \forall \alpha\ge\alpha_0.
$$

\noindent (iii) We prove now that $Z_\alpha$ does not change in value when we let $\alpha\downarrow A$. Indeed, for all $\alpha\in (A,\infty)$ $v_\alpha$ is positive at the beginning  and negative at the end of $I_\alpha$ so that $Z_\alpha\ge 1$. The fact that $Z_\alpha<\infty$ is obtained by contradiction at a limit point of the set of zeros, since at that point $v_\alpha$ must have horizontal tangent, which contradicts the local uniqueness of solutions of the Cauchy problem \eq{cauchy}.

Next, we note that an increase in the number of connected components as $\alpha$ decreases cannot take place near the ends  of the defining interval. But in the middle of the interval an increase of $Z_\alpha$ means a new small interval of positivity and another one of negativity, hence $Z_\alpha$ is always odd. The supremum $\alpha_0$ of the
$\alpha$ for which $Z_\alpha>1$ will have a zero of $v_\al$ with horizontal tangent any point which is limit of the connected component that is lost as $\alpha\to\alpha_0$. Again, this goes against the local uniqueness of solutions of the Cauchy problem associated to the differential equation considered. Therefore, $Z_\alpha=1$
for all $\alpha>A$.
\end{proof}

We can now prove

\begin{lemma} One has $A=U(0)$.
\end{lemma}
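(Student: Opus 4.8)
The plan is to argue by contradiction. Since we already know $A\ge U(0)$, suppose $A>U(0)$. Then $A\in P$, so $u_A$ is a positive global solution with $u_A(0)=A>U(0)$ and $u_A\neq U$. First I would record that $u_A$ has \emph{slow} polynomial decay. Indeed, if $u_A$ decayed exponentially, then Lemma~\ref{upperbound} would upgrade this to $u_A(r)\le Ae^{-(n-1)r}$, forcing $u_A\in H^1(\mathbb H^n)$ and hence $u_A=U$ by the uniqueness in Theorem~A, a contradiction; thus Lemma~\ref{exp} applies to $u_A$ and the argument of Theorem~\ref{polynomial} gives $\lim_{r\to\infty}r^{1/(p-1)}u_A(r)=c(n,p)$, i.e. \eqref{pol}. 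Consequently the difference $v_A:=u_A-U$ satisfies $v_A(0)=A-U(0)>0$ and, since a polynomial beats $e^{-(n-1)r}$, also $v_A(r)>0$ for all large $r$.

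Next I would show that $v_A$ has at least two zeros, all of them transversal. Transversality is immediate: $v_A$ solves the linear equation $v''+(n-1)(\coth r)v'+b(r)v=0$ with $b(r)=p\,\tilde u(r)^{p-1}>0$, so a double zero would give $v_A\equiv0$ by Cauchy uniqueness, i.e. $u_A\equiv U$, which is impossible. To rule out that $v_A>0$ on all of $(0,\infty)$ I would use the Wronskian $W(r):=(\sinh r)^{n-1}\big(u_A'(r)U(r)-u_A(r)U'(r)\big)$, for which $W(0)=0$ and $W'(r)=(\sinh r)^{n-1}u_A(r)U(r)\big(U(r)^{p-1}-u_A(r)^{p-1}\big)$. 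If $u_A>U$ everywhere then $W'<0$, whence $W(r)<0$ for all $r>0$; but from \eqref{n-1}, \eqref{claimderivative} and the slow decay of $u_A$ one computes that the term $-(\sinh r)^{n-1}u_A U'$ dominates and is positive, so $W(r)\to0^+$ and in particular $W(r)>0$ for large $r$ — a contradiction. Hence $v_A$ vanishes somewhere; since it is positive at both ends and has only transversal zeros, it must possess a first (downward) zero $r_1$ and a second (upward) zero $r_2>r_1$, both simple.

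Finally I would transport the second crossing to values $\alpha>A$ in order to contradict the previous lemma. By continuous dependence on the initial datum, $v_\alpha\to v_A$ in $C^1([0,r_2+1])$ as $\alpha\to A$; because $r_1$ and $r_2$ are simple zeros of $v_A$, for $\alpha$ close enough to $A$ the function $v_\alpha$ still changes sign near $r_1$ and near $r_2$, and hence has at least two zeros in $[0,r_2+1]$. On the other hand, for $\alpha>A$ tending to $A^+$ the first zero $R_\alpha$ of $u_\alpha$ satisfies $R_\alpha\to+\infty$ (since $u_\alpha\to u_A>0$ uniformly on compact sets), so for such $\alpha$ one has $[0,r_2+1]\subset[0,R_\alpha)=\mathrm{int}\,I_\alpha$ with $u_\alpha>0$ there. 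Therefore $Z_\alpha=Z(v_\alpha,I_\alpha)\ge2$ for these $\alpha>A$, contradicting the previous lemma, which asserts $Z_\alpha=1$ for every $\alpha>A$. This contradiction forces $A=U(0)$. The two delicate points are the sign of the Wronskian at infinity (which is exactly what excludes $u_A$ remaining above $U$) and the persistence of the simple zero $r_2$ under the perturbation $\alpha\to A^+$; they rely, respectively, on the precise decay rates in \eqref{pol} and \eqref{n-1} and on $C^1$ continuous dependence together with $R_\alpha\to+\infty$.
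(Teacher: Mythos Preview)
Your proof is correct, and it reaches the same contradiction with the previous lemma ($Z_\alpha=1$ for $\alpha>A$), but the route is genuinely different from the paper's.

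The paper argues in the reverse order. It first obtains a zero of $v_A$ from the Sturm-type comparison of Lemma~\ref{lemma1} (taking $\alpha_1=A$, $\alpha_2=\alpha_3=U(0)$, $\alpha_4<U(0)$). Then it uses the perturbation argument to $\alpha=A+\ve$ to rule out a \emph{second} zero of $v_A$: a second crossing would force $Z_\alpha>1$ for nearby $\alpha>A$, which is impossible. Hence $v_A$ has exactly one zero and $u_A<U$ for all large $r$; this forces $u_A$ to have fast decay and to lie in $H^1_r$, contradicting the uniqueness of $U$ in Theorem~A.

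You instead front-load the decay analysis: you first show $u_A$ must have slow (polynomial) decay by the machinery of Theorem~\ref{polynomial} (this indeed goes through verbatim for any positive global solution $\ne U$, since nothing in that section uses $u(0)<U(0)$). This gives $v_A>0$ near infinity as well as at $r=0$. Your Wronskian computation---which is correct, since $(\sinh r)^{n-1}U$ and $(\sinh r)^{n-1}U'$ both tend to finite limits while $u_A'/u_A\to0$, so the term $-(\sinh r)^{n-1}u_AU'$ dominates and is positive---then excludes $v_A>0$ throughout, forcing at least one zero, hence at least two by the positivity at both ends. The perturbation step is the same as the paper's, just applied to reach the contradiction directly rather than to bound the number of crossings.

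What each approach buys: the paper's route is shorter because it never needs the slow-decay asymptotics of $u_A$ and reuses Lemma~\ref{lemma1}, which was already set up. Your route is more self-contained at this point---the Wronskian replaces the Sturm comparison entirely---but it leans on re-running the decay arguments of Section~\ref{proofsub} for a solution with $u(0)>U(0)$. Both are valid; they are essentially contrapositives of one another (the paper shows ``one zero $\Rightarrow$ fast decay $\Rightarrow$ contradiction'', you show ``slow decay $\Rightarrow$ two zeros $\Rightarrow$ contradiction'').
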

\begin{proof} Assume that $A>U(0)$. The result that $U_A$ has at least one intersection with $U$ comes from Lemma \ref{lemma1},  applied to $\alpha_1=A$, $\alpha_2=\alpha_3=U(0)$ and $0<\alpha_4<U(0)$, since we know that the last two cross once
at a finite distance, hence $U_A$ must cross $U$ before. The result that it does not have two or more intersections is proved by contradiction: if there is a later point at which $v_A(r_1)>0$ using continuity of the solutions $Z_\alpha$ would be larger than 1 for $\alpha=A+\ve $ near $A$. Hence, $Z(A)=1$.

But then, since $U_A$ cannot change sign, it will be less than $U$ for all $r$ sufficiently large, and this is a contradiction with the uniqueness of global positive fast-decaying solutions.\end{proof}

The just proved lemma shows that $\alpha=U(0)$ is the threshold between positive solutions and sign-changing solutions
to \eq{cauchy}. Let us now prove the four Items in Theorem \ref{osc}.

Let $R(a)$ denote the first zero of radial solutions corresponding to $u(0)=a>U(0)$. By \cite[Proposition 4.4]{mancini} we know that the corresponding Dirichlet problem admits a unique radial positive solution in any ball of finite radius. This fact and the above results show that $a\mapsto R(a)$ is strictly decreasing with $\underset{a\to U(0)}\lim R(a)=+\infty$ and $\underset{a\to +\infty}\lim R(a)=0$. This proves Item $(i)$

In order to prove Item $(ii)$ we argue by contradiction assuming that $u$ oscillates infinitely many times. By Lemma \ref{zero},
for any $\varepsilon>0$ there exists $r_0$ such that $|u(r)|<\varepsilon^{1/(p-1)}$ for all $r>r_0$. Let $r_1>r_0$ be a point such
that $u(r_1)=0$, let $r_2>r_1$ the first maximum of $u$ after $r_1$ and $r_3>r_2$ the next zero of $u$, so that $u>0$ in $[r_2,r_3)$.
For any $r\in(r_2,r_3]$ we also have $u^\prime(r)<0$ in view of \eqref{alternative}. Since $\coth r>1$ for all $r>0$, from \eqref{equazione} we thus get
\[
u^{\prime\prime}+(n-1)u^\prime+\varepsilon u\ge0\ \ \ \forall r\in[r_2,r_3].
\]
We can and shall assume now that $\varepsilon<(n-1)^2/4$. Put
\neweq{lambda}
\lambda_1=\frac{n-1-\sqrt{(n-1)^2-4\varepsilon}}{2}\, ,\qquad \lambda_2=\frac{n-1+\sqrt{(n-1)^2-4\varepsilon}}{2}
\endeq
so that $\lambda_2>\lambda_1>0$. In the new variable $s=r-r_2$ the differential inequality satisfied by $V(s)=u(r-r_2)$ reads
\[
\left[e^{(\lambda_2-\lambda_1)s}\left(e^{\lambda_1s}V(s)\right)^\prime\right]^\prime\ge0,
\]
with $V$ satisfying the initial conditions $V(0)=B>0$, $V^\prime(0)=0$. Integrating from $0$ to $s$ yields:
\neweq{yields}
e^{(\lambda_2-\lambda_1)s}\left(e^{\lambda_1s}V(s)\right)^\prime\ge \lambda_1V(0)+V^\prime(0)=\lambda_1B
\endeq
or, equivalently, $\left(e^{\lambda_1s}V\right)^\prime\ge \lambda_1Be^{(\lambda_1-\lambda_2)s}$. Integrating again from $0$ to $s$ yields:
\[
e^{\lambda_1s}V(s)-B\ge \frac{\lambda_1B}{\lambda_2-\lambda_1}\left[1-e^{-(\lambda_2-\lambda_1)s}\right].
\]
This latter inequality can also be written as
\[
V(s)\ge \frac B{\lambda_2-\lambda_1}\left[\lambda_2e^{-\lambda_1s}-\lambda_1e^{-\lambda_2s}\right]
\]
or, in the original variable $r$,
\begin{equation}\label{v-decay}
u(r)\ge \frac B{\lambda_2-\lambda_1}\left[\lambda_2e^{-\lambda_1(r-r_2)}-\lambda_1e^{-\lambda_2(r-r_2)}\right]\ \ \ \ \forall r\in[r_2,r_3].
\end{equation}
But the r.h.s.\ of the latter inequality is positive since $\lambda_2>\lambda_1$. This means that $u(r_3)>0$, a contradiction. Therefore,
$u(r)$ never vanishes for $r>r_2$ and hence its number of zeros is finite. This proves Item $(ii)$.\par
In fact, the above arguments allow to prove the following stronger statement:

\begin{lemma}\label{ultimozero}
Let $u$ be a sign-changing solution to \eqref{cauchy} and assume that there exists $r_2>0$ such that
\neweq{r2}
u^\prime(r_2)=0\quad\mbox{and}\quad\varepsilon:=|u(r_2)|^{p-1}<\frac{(n-1)^2}{4}.
\endeq
Then $u$ does not vanish on $[r_2,+\infty)$ and there exists $C>0$ such that
\beq\label{general}
|u(r)|\ge C e^{-\lambda_1 r}\qquad\forall r\ge r_2,
\eeq
where $\lambda_1$ is as in \eqref{lambda}.
\end{lemma}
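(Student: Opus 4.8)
The plan is to promote the argument already used for Item $(ii)$ into a self-contained statement, keeping careful track of the explicit exponential lower bound that the computation produces. First I would reduce to the case $u(r_2)>0$: since the nonlinearity $|u|^{p-1}u$ is odd, $-u$ solves the same equation \eqref{equazione} with the same data at $r_2$ and the same value of $\varepsilon$, so it suffices to treat $u(r_2)=B>0$ and then read off the conclusion for $|u|$ at the end. With $B>0$ fixed, I would first record that $r_2$ is a local maximum and that $u$ is strictly decreasing as long as it stays positive: by \eqref{alternative} the map $r\mapsto(\sinh r)^{n-1}u'(r)$ is strictly decreasing on any interval where $u>0$, and since it vanishes at $r_2$ it becomes negative immediately afterwards, forcing $u'(r)<0$. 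In particular $u(r)\le u(r_2)=B$ and hence $u(r)^{p-1}\le\varepsilon$ on any such interval.

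The core computation is then exactly the one leading to \eqref{v-decay}. Using $\coth r>1$ together with $u'<0$ and $u^{p-1}\le\varepsilon$, on any interval to the right of $r_2$ where $u>0$ the equation \eqref{equazione} yields the linear differential inequality
\[
u''(r)+(n-1)u'(r)+\varepsilon\, u(r)\ge0 .
\]
With $\lambda_1,\lambda_2$ as in \eqref{lambda} (real, distinct and positive precisely because $0<\varepsilon<(n-1)^2/4$), this factors as $\big[e^{(\lambda_2-\lambda_1)s}(e^{\lambda_1 s}V)'\big]'\ge0$ in the variable $s=r-r_2$, $V(s)=u(r_2+s)$, and integrating twice from $0$ — using $V(0)=B$, $V'(0)=0$ exactly as in \eqref{yields}--\eqref{v-decay} — gives
\[
u(r)\ge\frac{B}{\lambda_2-\lambda_1}\Big[\lambda_2\, e^{-\lambda_1(r-r_2)}-\lambda_1\, e^{-\lambda_2(r-r_2)}\Big].
\]

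The non-vanishing then follows by a first-zero argument. If $u$ had a first zero $r_3>r_2$, then $u>0$ on $[r_2,r_3)$ and $u'<0$ on $(r_2,r_3]$, so the differential inequality and hence the displayed lower bound hold on all of $[r_2,r_3]$; but its right-hand side is strictly positive for every $s\ge0$ (since $\lambda_2>\lambda_1>0$), which contradicts $u(r_3)=0$. Therefore $u>0$ on all of $[r_2,+\infty)$, the inequality and the lower bound are valid for every $r\ge r_2$, and for large $r$ the dominant term $\tfrac{B\lambda_2}{\lambda_2-\lambda_1}e^{\lambda_1 r_2}e^{-\lambda_1 r}$ yields $u(r)\ge C\,e^{-\lambda_1 r}$; adjusting $C$ using continuity and positivity of $u$ on the remaining compact part of $[r_2,+\infty)$ gives \eqref{general}, and returning to the general sign via $u\mapsto-u$ produces the bound for $|u|$. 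The only delicate point is the apparent circularity between monotonicity and positivity: the estimate $u^{p-1}\le\varepsilon$ that drives everything is available only while $u$ stays positive and decreasing, so positivity cannot be assumed but must be extracted from the lower bound itself — which is precisely what the first-zero argument accomplishes.
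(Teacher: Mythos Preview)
Your argument is correct and follows essentially the same route as the paper: derive the linear differential inequality $u''+(n-1)u'+\varepsilon u\ge 0$ on the interval where $u>0$ and $u'\le 0$, factor via $\lambda_1,\lambda_2$, integrate twice to obtain the explicit lower bound \eqref{v-decay}, and conclude by the first-zero contradiction. The only cosmetic difference is that you dispose of the case $u(r_2)<0$ via the odd symmetry $u\mapsto -u$, whereas the paper repeats the computation with the inequalities reversed; your reduction is slightly cleaner but entirely equivalent.
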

\begin{proof} This result was proved above in the case where $r_2$ is a relative maximum (at positive level) for $u$.
The case where $r_2$ is a relative minimum (at negative level) for $u$ can be treated similarly.
Indeed, suppose that $r_2$ is a local minimum of $u$ such that $\varepsilon:=|u(r_2)|^{p-1}<\frac{(n-1)^2}{4}$. For contradiction, assume that
there exists $r_3>r_2$ such that $u(r_3)=0$. We can use the fact that $u$ is negative in $[r_2,r_3)$ and that $u^\prime$ is positive in $(r_2,r_3]$
to conclude that
\[
u^{\prime\prime}(r)+(n-1)u^\prime(r)+\varepsilon u(r)\le0\qquad\forall r\in[r_2,r_3].
\]
Proceeding exactly as before we can conclude that $u(r_3)<0$, a contradiction. Hence $u$ remains negative for all $r>r_2$. Moreover, for a
suitable $B>0$, the bound
$$u(r)\le -\frac B{\lambda_2-\lambda_1}\left[\lambda_2e^{-\lambda_1(r-r_2)}-\lambda_1e^{-\lambda_2(r-r_2)}\right]\qquad\forall r\in[r_2,r_3]
$$
holds true. This also proves \eqref{general}.\end{proof}

\begin{figure}[ht]
\centering
\includegraphics[height=6cm, width=12cm]{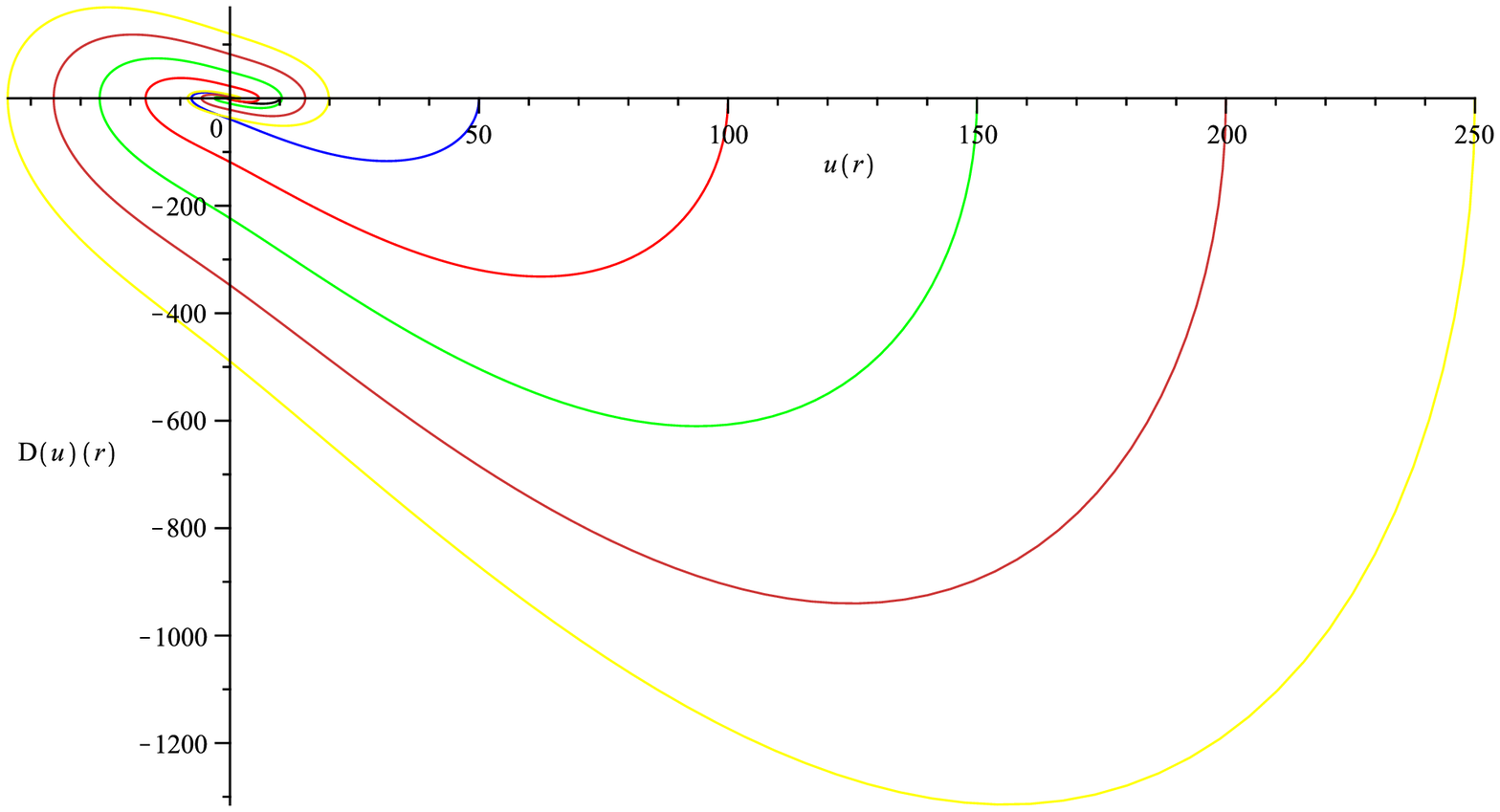}
\flushleft \color{darkred}\noindent\textit{Figure 5: phase plot of some sign-changing solutions when $d=3$, $p=2$ (subcritical case).}
\end{figure}

We are now ready to prove Item $(iii)$. For any $\delta>0$ consider the radial solution $u$ to \eqref{cauchy} with initial data $\alpha=U(0)+\delta$
and let $R(\alpha)$ denote the first zero of $u$. By $C^1$ continuous dependence w.r.t.\ to initial data, we see that
$$
R(\alpha)\to+\infty\quad\mbox{and}\quad u^\prime(R(\alpha))\to0\qquad\mbox{as }\delta\to0.
$$
The Lyapunov functional $F$ defined in \eqref{F} satisfies $F(R(\alpha))=u^\prime(R(\alpha)^2/2$.
Since $F$ is decreasing this implies that,
if we denote by $r_2$ the first minimum of $u$ after $R(\alpha)$, $|u(r_2)|<[(p+1)u^\prime(R(\alpha))^2/2]^{1/(p+1)}\to0$ as $\delta\to0$.
Provided $\delta$ is sufficiently small, the above steps then imply that \eqref{r2} holds. Hence, Lemma \ref{ultimozero} shows that for any such
$\delta$ \eqref{general} holds. In view of \eqref{lambda} we know that $\lambda_1\to0$ as $\delta\to0$. This proves that for all $\delta>0$
sufficiently small the radial solutions corresponding to $u(0)=\alpha=U(0)+\delta$  do not belong to L$^2({\mathbb H}^n)$,
hence they do not belong
to H$^1({\mathbb H}^n)$ as well. Moreover, again by Lemma \ref{ultimozero}, these solutions only have one zero.\par
In order to complete the proof of Item $(iii)$, we still have to prove \eqref{negativelimit}. To this end, consider one of the solutions just found,
namely a radial sign-changing solution $u$ to \eqref{cauchy} passing from positive to negative at some $r_0$, being the unique zero of $u$.
Since $|u(r)|<\varepsilon$ sufficiently small for all $r>r_0$, we may proceed as in the proof of \eqref{yields}. Setting $s=r-r_1$ and
$V(s)=u(r-r_1)$ with $r_1>r_0$ the first (unique) minimum of $u$, we find that
\[e^{(\lambda_2-\lambda_1)s}\left(e^{\lambda_1s}V(s)\right)^\prime\le \lambda_1V(0)+V^\prime(0)=-\lambda_1B\]
for $B=-V(0)>0$. This inequality implies that
\neweq{boundV}
V^\prime(s)\le -\lambda_1V(s)-Be^{-\lambda_2s}<-\lambda_1V(s).
\endeq
We now go back to the proof of Lemma \ref{theta}. It was proved there that $\Theta(r):=u^\prime(r)/u(r)\to0$ as $r\to+\infty$ in
the supercritical case and for positive solutions only. The proof was based on the bound \eqref{second lower}. A careful investigation
of the proof of Lemma \ref{theta} shows that it relies on the fact that $\frac{2(n-1+\eps)}{p+1}<n-1$ for $\varepsilon$ small.
One can check that Lemma \ref{theta} still holds for negative solutions satisfying $u^\prime(r)\le -\nu u(r)$ for all $r$
sufficiently large and a $\nu<n-1$. In the present situation, \eqref{boundV} shows that $u^\prime(r)\le-\lambda_1 u(r)$ for all $r\ge r_1$.
Since $\lambda_1$ is given by \eqref{lambda} we see that $\lambda_1<n-1$ provided $\varepsilon$ is small.
Therefore, Lemma \ref{theta} applies and $u^\prime(r)=o(u(r))$ as $r\to+\infty$. Since the argument outlined just
after the end of the proof of Lemma \ref{theta} depends exactly on the fact that $u^\prime(r)=o(u(r))$ as $r\to+\infty$, with the same arguments
used there we conclude that \eqref{negativelimit} holds. This completes the proof of Item $(iii)$.\par

In order to prove Item $(iv)$ we remark that Lemma \ref{ultimozero} shows that {\em zeros of $u_\alpha$ may enter from infinity once at a time
as $\alpha=u_\alpha(0)$ increases}. A further zero may only enter when the last critical point of $u_\alpha$ violates \eq{r2}.
So, since for $u_\alpha(0)<U(0)$ we have no zeros at all, it suffices to prove that the number of zeros of $u_\alpha$ can be arbitrarily large.
To see this, we make use of the same notations in the proof of Lemma \ref{rescale}. Since $\overline{v}$ has infinitely many zeros in view of
\cite[Theorem 15]{PuSer}, we know that for any integer $k$ there exists $S_k>0$ such that $\overline{v}$ has exactly $k$ zeros in $[0,S_k]$ and $\overline{v}(S_k)\neq0$.
The convergence $v_\lambda\to\overline{v}$ in $C^1([0,S_k])$ shows that also $v_\lambda$ has exactly $k$ zeros in
$[0,S_k]$ provided $\lambda$
is sufficiently large. In turn, by \eq{changeof} also $u_\lambda$ has exactly $k$ zeros in the interval $r\in[0,S_k/\lambda]$.
Therefore, $u_\lambda$ has at least $k$ zeros, provided $\lambda$ is large enough.

\begin{figure}[ht]
\centering
\includegraphics[height=6cm, width=12cm]{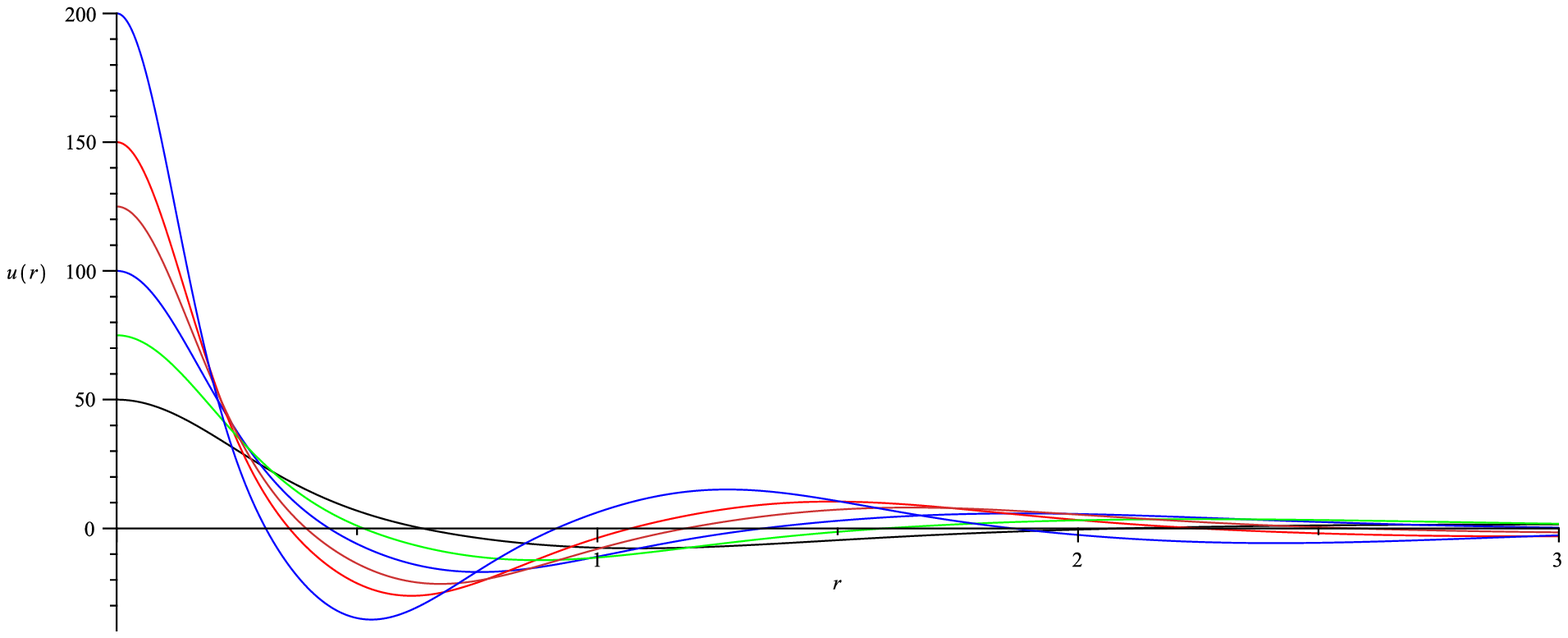}
\flushleft \color{darkred}\noindent\textit{Figure 6: plot of some sign-changing solutions when $d=3$, $p=2$ (subcritical case).}
\end{figure}

For the proof of Item $(v)$ we consider radial solutions $u$ in H$^1(\mathbb H^n)$ and notice that, by the proof of \cite[Theorem 3.1]{bhakta},
it follows that $|u|$ satisfies a uniform exponential upper bound. Both the proof of Lemma \ref{upperbound} and the proof of Lemma \ref{exp}
can be repeated with no change besides integrating over $(r_1,r)$ instead of over $(0,r)$, where $r_1$ is the last stationary point of $u$.
Then there exists $c>0$ such that $|u(r)|\le c\,e^{-(n-1)r}$ for all $r$. Passing from this to \eqref{n-1} can be done as in the comments just after
the proof of Lemma \ref{upperbound}. This completes the proof of Item $(v)$  and of Theorem \ref{osc}.\endproof

\section{\bf Sublinear case: proof of Theorem \ref{sublinear}}\label{proofsublin}

For contradiction, assume that there exists a local solution $u$ to \eqref{cauchy} which can be extended to a positive solution on $[0,+\infty)$.
By \eqref{alternative} we have
\beq\label{div}
-\Big[(\sinh r)^{n-1}\, u'(r)\Big]'=u(r)^p\, (\sinh r)^{n-1}\qquad\mbox{for all }r>0.
\eeq
By integrating this inequality and taking into account that $u$ is decreasing we obtain
\neweq{pminore}
-(\sinh r)^{n-1}\, u'(r)=\int_0^r u(s)^p\, (\sinh s)^{n-1}\,{\rm d}s\ge u(r)^p\phi_n(r)\qquad\mbox{for all }r>0.
\endeq
where $\phi_n$ is as defined in \eqref{phin}. Hence,
$$
-u'(r)u(r)^{-p}\ge\frac{\phi_n(r)}{(\sinh r)^{n-1}}\qquad\mbox{for all }r>0.
$$

By integrating over $(0,r)$ and recalling that $0<p<1$ we infer
$$-u(r)^{1-p}+u(0)^{1-p}\ge(1-p)\int_0^r\frac{\phi_n(t)}{(\sinh t)^{n-1}}\,{\rm d}t.$$
By letting $r\to+\infty$ and by recalling \eqref{triviallimit} we see that the l.h.s.\ tends to $u(0)^{1-p}$ whereas the r.h.s.\ tends to $+\infty$, contradiction.

\begin{figure}[ht]
\centering
\includegraphics[height=6cm, width=12cm]{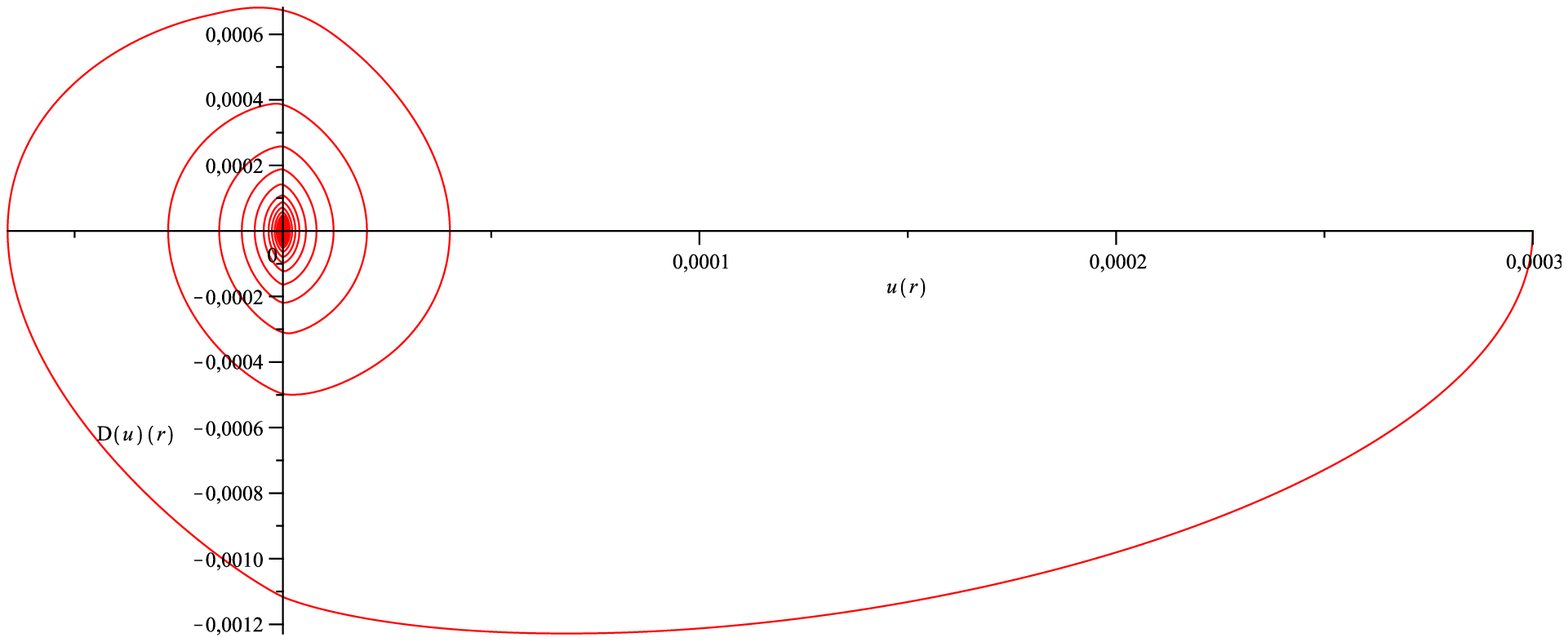}
\flushleft \color{darkred}\noindent\textit{Figure 7: phase plot of one sign-changing solution when $d=3$, $p=\frac12$ (sublinear case).}
\end{figure}

The same argument can be used to show that any solution admits infinitely many oscillations. Assume for contradiction that $u$ admits a local
maximum at some $r_0$ and remains positive for all $r\ge r_0$. Then integrate \eqref{pminore} over $(r_0,r)$ to reach the very same contradiction.
Similarly, one obtains a contradiction if $u$ admits a local minimum at some $r_0$ and remains negative for all $r\ge r_0$. Hence, any local
solution to \eqref{cauchy} admits infinitely many oscillations.

For the proof of \eqref{supinf}, by Remark \ref{psisub} we see that any solution $u$ satisfies $\Psi(r)>\delta$ for some $\delta>0$, provided
$r\ge r_\delta>0$. Hence, in any of the critical points $\rho\ge r_\delta$ of $u$, we have
$$\Psi(\rho)=\phi_n(\rho)\frac{|u(\rho)|^{p+1}}{p+1}>\delta$$
so that
$$|u(\rho)|>\left(\frac{\delta(p+1)}{\phi_n(\rho)}\right)^{1/(p+1)}\ge c e^{-\frac{n-1}{p+1}\rho}$$
and \eqref{supinf} follows.


\

\noindent {\large \sc Acknowledgments}. The authors acknowledge a contribution by the 2008 Spain-Italy research initiative HI2008-0178. The work was partially done while first and fourth authors were visitors at MSRI, Berkeley. These authors have been partially funded by Project MTM2008-06326-C02-01 (Spain).

\

\end{document}